\numberwithin{equation}{section}
\numberwithin{figure}{section}
\numberwithin{table}{section}
\newenvironment{lyxcode}
{\par\begin{list}{}{
\setlength{\rightmargin}{\leftmargin}
\setlength{\listparindent}{0pt}% needed for AMS classes
\raggedright
\setlength{\itemsep}{0pt}
\setlength{\parsep}{0pt}
\normalfont\ttfamily}%
 \item[]}
{\end{list}}
  \theoremstyle{plain}
  \newtheorem*{thm*}{\protect\theoremname}
\theoremstyle{plain}
\newtheorem{thm}{\protect\theoremname}[section]
  \theoremstyle{plain}
  \newtheorem{cor}[thm]{\protect\corollaryname}
  \theoremstyle{remark}
  \newtheorem*{acknowledgement*}{\protect\acknowledgementname}
  \theoremstyle{definition}
  \newtheorem{defn}[thm]{\protect\definitionname}
  \theoremstyle{remark}
  \newtheorem{claim}[thm]{\protect\claimname}
  \theoremstyle{definition}
  \newtheorem{example}[thm]{\protect\examplename}
  \theoremstyle{plain}
  \newtheorem{lem}[thm]{\protect\lemmaname}
  \theoremstyle{remark}
  \newtheorem{rem}[thm]{\protect\remarkname}
  \theoremstyle{plain}
  \newtheorem{conjecture}[thm]{\protect\conjecturename}
  \theoremstyle{plain}
  \newtheorem{prop}[thm]{\protect\propositionname}
\newcommand{\lyxaddress}[1]{
\par {\raggedright #1
\vspace{1.4em}
\noindent\par}
}
\newcommand{\ind}{{\mathbbm{1}}}
\date{}
  \providecommand{\acknowledgementname}{Acknowledgement}
  \providecommand{\claimname}{Claim}
  \providecommand{\conjecturename}{Conjecture}
  \providecommand{\corollaryname}{Corollary}
  \providecommand{\definitionname}{Definition}
  \providecommand{\examplename}{Example}
  \providecommand{\lemmaname}{Lemma}
  \providecommand{\propositionname}{Proposition}
  \providecommand{\remarkname}{Remark}
  \providecommand{\theoremname}{Theorem}
\providecommand{\theoremname}{Theorem}
\begin{document}

\title{\textbf{Simplicial branching random walks}\\
\textbf{and their applications}}

\author{Ron Rosenthal%
\thanks{Partially supported by an ETH fellowship.%
}}
\maketitle
\begin{lyxcode}
\end{lyxcode}
\begin{abstract}
We define a new stochastic process on general simplicial complexes
which allows to study their spectral and homological properties. Some
results for random walks on graphs are shown to hold in this general
setting. As an application, the process is used to calculate the spectral
measure of high-dimensional analogues of regular trees and to construct
solutions to the high-dimensional Dirichlet problem for forms. 
\end{abstract}

\section{Introduction}

The topic of ``random walks on graphs'' is a classical and fundamental
subject. With a history of more than a century and a variety of applications
to physics, computer science, chemistry and many other fields, random
walks are among the most valuable stochastic models. In addition,
their connections with many areas of research within mathematics such
as probability, geometry, graph theory, harmonic analysis, group theory,
etc, make random walks a valuable tool when investigating their interplay.
Accordingly, the literature is very vast and we refer the reader to
the following books \cite{Sp76,DS84,Lo96,Wo00,LL10} as well as the
references therein for background on the subject. 

Simplicial complexes are combinatorial and topological extensions
of graphs and it is thus natural to ask whether one can generalize
random walk models to the world of high-dimensional simplicial complexes. 

A first construction of such a stochastic process was suggested in
\cite{PR12} by Parzanchevski and the author. The process, which is
called the $\left(d-1\right)$-random walk, reflects in its asymptotic
behavior spectral properties of the upper Laplacian (originating in
the work of Eckmann \cite{Eck44}) as well as homological properties
of the complex. In a subsequent work \cite{MS13}, Mukherjee and Steenbergen
constructed a similar model for random walks on simplicial complexes,
which is connected to the lower Laplacian and in particular allows
to study the top homology of the complex. 

The connection of both models to the spectral and homological properties
of simplicial complexes is done via the study of an associated ``process'',
called the expectation process, which takes the role played by the
heat kernel of a random walk in the graph case. This generalization
differs from ``classical'' heat kernels in two regards: first, due
to the fact that high-dimensional simplexes have two possible orientations,
it is defined as the difference of two probabilities. Secondly, in
order to extract information from this difference of probabilities,
which always converges to zero, a suitable normalization is required. 

In this paper, we present a new stochastic process, called \emph{simplicial
branching random walk}, which is connected to spectral and homological
properties of simplicial complexes in a similar way as the other processes.
However, unlike in the previous models, the fact that one needs to
look at the difference of two quantities can already be observed at
the level of the process \emph{itself} and not merely in the context
of an associated process. Moreover in this new process, there is no
need for normalization. Hence, we can work with it \emph{directly},
and are able to gain new insights regarding the nature of this process
as well as its connections to spectral and homological properties
of the complex. 

In recent years, there has been considerable interest in \emph{high-dimensional
expanders}, namely, analogues of expander graphs in the context of
general simplicial complexes. As in the graph case, stochastic processes
like the simplicial branching random walk and the $\left(d-1\right)$-random
walks constructed in \cite{PR12,MS13} are closely related to one
such notion of expansion, namely, spectral expansion. Other notions
of expansion include: combinatorial expansion \cite{PRT12,Par13,GS14},
geometric and topological expansion \cite{Gr10,FGLNP10,MW11}, $\mathbb{F}_{2}$-coboundary
expansion \cite{DK12,SKM12} and Ramanujan complexes \cite{CSZ03,Li04,LSV05,GP14,EGL14,KKL14}.
There is also a great interest in the behavior of random complexes.
The standard model for such complexes is the Linial-Meshulam model,
defined in \cite{LM06}, which has been extensively studied, see \cite{MW09,Ko10,Wa11,HKP12,HJ13Th,HKP13,LP14,LPa14};
see also \cite{LM13} for related results on a different model.

\subsection{The model}

Let us start with an informal description of the the model. A more
precise definition is postponed to Section \ref{sec:The--branching-random-walk}
after all required notation and terminology are introduced in Section
\ref{sec:Notation-and-some_useful_facts}. Let $X$ be a $d$-dimensional
complex. The \emph{simplicial branching random walk} (SBRW for short)
$\left(N_{n}\right)_{n\geq0}$ is a particle process on the set of
oriented $\left(d-1\right)$-simplexes of $X$, denoted by $X_{\pm}^{d-1}$,
where, for an oriented $\left(d-1\right)$-simplex $\sigma$ and $n\geq0$,
$N_{n}\left(\sigma\right)$ stands for the number of particles in
$\sigma$ at time $n$. The process $\left(N_{n}\right)_{n\geq0}$
is a time-homogeneous Markov chain on $\mathbb{N}^{X_{\pm}^{d-1}}$,
with transition kernel which is described by the following law: Given
a configuration of particles, each of the particles (simultaneously
and independently) chooses one of the $d$-simplexes containing the
$\left(d-1\right)$-simplex of its current position uniformly at random
and splits into $d$ new particles that are now located on the other
$d$ faces of the chosen $d$-simplex (with an appropriate choice
of orientation, see Section \ref{sec:The--branching-random-walk}). 

For example, if $X$ is a triangle complex, the SBRW is a particle
process on oriented edges. If a particle is positioned on the oriented
edge $\left[u,v\right]$ and the chosen triangle containing it is
$\left\{ u,v,w\right\} $ then the particle splits into two new particles
on $\left[u,w\right]$ and $\left[w,v\right]$ (the orientation is
chosen so that the original oriented edge and the new oriented edges
have the same origin or the same terminus). 

Given $0\leq p\leq1$, one can also discuss the $p$-lazy version
of the SBRW in which every particle stays put with probability $p$
and with probability $\left(1-p\right)$ acts according to the law
described above. An illustration of one step of the process for a
triangle complex can be found in Figure \ref{fig:One_step_of_the_branching_process}.

\begin{figure}[h]
\centering{}\includegraphics[scale=1.5]{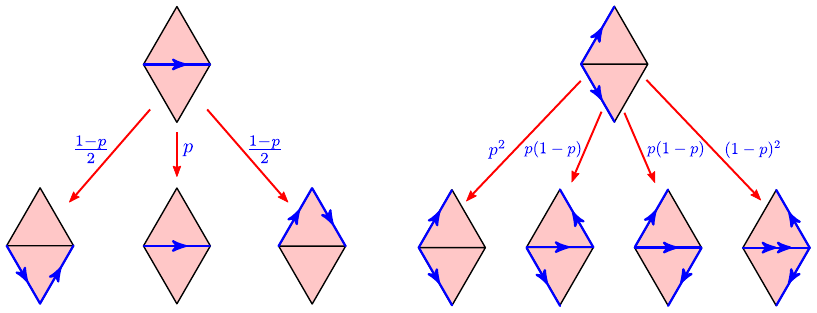}\protect\caption{One step of the simplicial branching random walk for two configurations.
On the left: The particle starting at the center stays put with probability
$p$ and with probability $1-p$ chooses one of the triangles containing
it uniformly at random and splits into two particles on the two other
(neighboring) edges of this triangle. On the right: each of the particles
stays put with probability $p$ or splits into two particles on the
unique triangle containing its current edge with probability $1-p$.
\label{fig:One_step_of_the_branching_process}}
\end{figure}

We now introduce the effective version of the process called\emph{
effective simplicial branching random walk} \emph{(ESBRW for short)}
by 
\[
D_{n}\left(\sigma\right)=N_{n}\left(\sigma\right)-N_{n}\left(\overline{\sigma}\right),
\]
where for an oriented $\left(d-1\right)$-simplex $\sigma$, $\overline{\sigma}$
is the same $\left(d-1\right)$-simplex with the opposite orientation.
Finally the heat kernel is defined as 
\[
\mathscr{E}_{n}\left(\sigma,\sigma'\right)=E^{\sigma}\left[D_{n}\left(\sigma'\right)\right],
\]
where $E^{\sigma}$ denotes the expectation when starting with a unique
particle on the oriented $\left(d-1\right)$-simplex $\sigma$.

\subsection{Main results }

We now give a description of the main results. For the sake of clarity
we only give informal statements for some of the results and refer
the reader to later sections for the precise statements.

The first result deals with the connection between the asymptotic
behavior of the heat kernel and the existence of non-trivial homology
in finite complexes. It is shown that a similar relation to the one
proved in \cite{PR12} for the $\left(d-1\right)$-walk holds for
the ESBRW (see Theorem \ref{thm:Finite_complexes_BRW_vs_homology}
for the precise statement). 
\begin{thm*}
Let $X$ be a \textbf{finite $d$}-complex, $\left(D_{n}\right)_{n\geq0}$
the $p$-lazy ESBRW on $X$ and $\left(\mathscr{E}_{n}\right)_{n\geq0}$
its heat kernel. If $p>\frac{d-1}{d+1}$, then 
\begin{enumerate}
\item The limit $\mathscr{E}_{\infty}=\lim_{n\to\infty}\mathscr{E}_{n}$
always exists. 
\item One can read off from the family $\left\{ \mathscr{E}_{\infty}\left(\sigma,\cdot\right)\right\} _{\sigma\in X_{\pm}^{d-1}}$
the dimension of the $\left(d-1\right)$-homology, and in particular,
whether the homology is trivial.
\item If furthermore $p\geq\frac{d}{d+1}$, then the rate of convergence
of $\mathscr{E}_{n}$ is exponential with a constant that depends
on a high-dimensional analogue of the spectral gap. 
\end{enumerate}
\end{thm*}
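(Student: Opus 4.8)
The plan is to reduce the asymptotic behavior of the heat kernel $\mathscr{E}_n$ to the spectral analysis of a single self-adjoint operator acting on $(d-1)$-forms. First I would identify the expectation operator governing the ESBRW: since each particle on an oriented simplex $\sigma$ splits into $d$ particles on the faces of a chosen $d$-simplex with a sign coming from orientation, the expected signed occupation $D_n$ should evolve linearly, $\mathscr{E}_{n+1} = M \mathscr{E}_n$ for a matrix $M$ on $\mathbb{R}^{X^{d-1}_\pm}$ that respects the antisymmetry $\mathscr{E}_n(\sigma,\cdot) = -\mathscr{E}_n(\overline\sigma,\cdot)$. I expect $M$ to be expressible through the upper Laplacian $\Delta^+_{d-1}$ (the Eckmann Laplacian), so that the $p$-lazy version reads $M = p\,I + (1-p)\,P$ where $P$ is the normalized branching operator, and a short computation should show $P = I - \tfrac{1}{d}\Delta^+_{d-1}$ or something of this form after accounting for the $d$-simplex degrees. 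The key structural point is that $M$ is self-adjoint with respect to the natural inner product on antisymmetric functions, so it is diagonalizable with real spectrum.

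Once $M$ is written as $p\,I + (1-p)P$ with $P$ self-adjoint and $\operatorname{spec}(P) \subseteq [-1,1]$, the three claims follow from spectral calculus. For part (1), I would show $\operatorname{spec}(M) \subseteq (-1,1]$ under the hypothesis $p > \tfrac{d-1}{d+1}$: the eigenvalue $1$ of $M$ corresponds precisely to the kernel of $\Delta^+_{d-1}$, and the threshold on $p$ is exactly what forces the smallest eigenvalue of $M$ to exceed $-1$, killing any oscillatory $(-1)^n$ behavior. Then $M^n = \sum_\lambda \lambda^n \Pi_\lambda$ converges as $n\to\infty$ to $\Pi_1$, the orthogonal projection onto $\ker \Delta^+_{d-1}$, giving existence of $\mathscr{E}_\infty$. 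For part (2), by the combinatorial Hodge decomposition $\ker \Delta^+_{d-1} = \operatorname{im}\,\partial_d^* \oplus \widetilde H^{d-1}$ — more precisely $\ker\Delta^+_{d-1}$ splits into the image of the lower adjoint plus the reduced cohomology — and $\mathscr{E}_\infty = \Pi_1$ is literally this projection, so the rank of $\{\mathscr{E}_\infty(\sigma,\cdot)\}_\sigma$ computes $\dim\ker\Delta^+_{d-1}$; subtracting the (complex-dependent but explicitly known) dimension of the coboundary part leaves $\dim H^{d-1}$. For part (3), the rate of convergence is governed by $\max_{\lambda\neq 1}|\lambda|$, and under $p \geq \tfrac{d}{d+1}$ this maximum is controlled by the spectral gap of $\Delta^+_{d-1}$, yielding a geometric rate.

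The step I expect to be the main obstacle is pinning down precisely how $M$ acts and verifying its self-adjointness together with the exact spectral location that produces the threshold $p > \tfrac{d-1}{d+1}$. The subtlety is that the branching is not mass-preserving in the naive sense — one particle becomes $d$ — so the signed operator $M$ on $D_n$ is not a stochastic matrix and its spectral radius on the full space $\mathbb{R}^{X^{d-1}_\pm}$ could a priori exceed $1$; the cancellation in $D_n(\sigma) = N_n(\sigma) - N_n(\overline\sigma)$ is what tames it. I would therefore carefully restrict attention to the antisymmetric subspace, compute the diagonal "staying" contribution and the off-diagonal "neighbor" contributions of $P$ separately, and track the contribution of the $d$-simplex degrees. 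Establishing $\operatorname{spec}(P)\subseteq[-1,1]$ and identifying exactly when $-1$ is approached is what forces the critical value $\tfrac{d-1}{d+1}$ (and the improved $\tfrac{d}{d+1}$ for the exponential rate): once this spectral picture is secured, the three conclusions are routine consequences of writing $\mathscr{E}_n = M^n$ and applying the spectral theorem.
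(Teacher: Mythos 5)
Your overall strategy is sound and it is genuinely different from the paper's. The paper's proof is a reduction: Lemma \ref{lem:comparison_of_models} shows both that the heat kernel evolves by $\mathscr{A}_{p}=I-\left(1-p\right)\Delta^{+}$ and, crucially, that $\mathscr{E}_{n}^{p}$ coincides \emph{exactly} with the normalized expectation process $\widetilde{\mathcal{E}}_{n}^{p'}$ of the $\left(d-1\right)$-walk of \cite{PR12}, where $p'=\frac{p}{1+\left(1-p\right)\left(d-1\right)}$; Theorem \ref{thm:Finite_complexes_BRW_vs_homology} then follows in one line from the quoted Theorem \ref{thm:(d-1)-walk_vs_homology}, the thresholds $\frac{d-1}{3d-1}$ and $\frac{1}{2}$ transforming into $\frac{d-1}{d+1}$ and $\frac{d}{d+1}$ under this substitution. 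You instead run the spectral argument directly on the evolution operator: self-adjointness, location of the spectrum, convergence of $\mathscr{A}_{p}^{n}$ to the orthogonal projection onto $\ker\Delta^{+}=Z^{d-1}=\mathcal{H}^{d-1}\oplus B^{d-1}$, and Hodge theory to extract $\dim H_{d-1}$. This is self-contained (in effect it reproves the \cite{PR12} result rather than citing it) and it exhibits the paper's own selling point that the ESBRW needs no normalization; the paper's route buys brevity at the price of importing the earlier theorem.

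There is, however, a concrete quantitative inconsistency in your sketch that you would have to fix in execution. The correct operator is $\mathscr{A}_{p}=pI+\left(1-p\right)\left(I-\Delta^{+}\right)$, i.e.\ $P=I-\Delta^{+}$ (not $I-\frac{1}{d}\Delta^{+}$), and it is self-adjoint with respect to the \emph{degree-weighted} inner product $\left\langle f,g\right\rangle =\sum_{\sigma\in X^{d-1}}\deg\left(\sigma\right)f\left(\sigma\right)g\left(\sigma\right)$, not the counting one. Since $\mathrm{Spec}\left(\Delta^{+}\right)\subseteq\left[0,d+1\right]$ (non-negativity of $\partial_{d}\delta_{d}$ plus a Gershgorin bound: each $\left(d-1\right)$-cell has $d\cdot\deg\left(\sigma\right)$ neighbors entering with coefficient $\frac{1}{\deg\left(\sigma\right)}$), one gets $\mathrm{Spec}\left(P\right)\subseteq\left[-d,1\right]$, \emph{not} $\left[-1,1\right]$ as you assert. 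Your two claims actually contradict each other: if $\mathrm{Spec}\left(P\right)\subseteq\left[-1,1\right]$ held, then $\mathrm{Spec}\left(\mathscr{A}_{p}\right)\subseteq\left[2p-1,1\right]$ and every $p>0$ would give convergence, so no threshold at $\frac{d-1}{d+1}$ could emerge. The threshold comes precisely from $1-\left(1-p\right)\left(d+1\right)>-1\iff p>\frac{d-1}{d+1}$, and the bottom value $-d$ of $\mathrm{Spec}\left(P\right)$ is attained exactly when $X$ has a disorientable $\left(d-1\right)$-component, which is why the paper's precise statement can also include the boundary case $p=\frac{d-1}{d+1}$ in the absence of such components. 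Likewise for part (3): when $p\geq\frac{d}{d+1}$ one has $\left(1-p\right)\left(d+1\right)\leq1$, so every eigenvalue of $\mathscr{A}_{p}$ other than $1$ has modulus at most $1-\left(1-p\right)\lambda_{d-1}\left(X\right)$, which is the stated rate. Since you flagged the spectral location as the step to be pinned down, this is a repairable slip rather than a wrong approach, but as written the numbers do not cohere.
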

Our next result concerns a generalization of the following important
identity for random walks on graphs (see also Theorem \ref{thm:R_via_S_and_overline=00007BS=00007D}). 
\begin{thm*}
Let $P^{v}$ $(E^{v}$) be the law (expectation) of a random walk
on a graph $G$. The identity 
\begin{equation}
E^{v}\left[\begin{array}{c}
\mbox{number of visits to }v\\
\mbox{by the random walk}
\end{array}\right]=\frac{1}{1-P^{v}\left(\begin{array}{c}
\mbox{The random walk}\\
\mbox{returns to }v
\end{array}\right)}\label{eq:return_prob_RW}
\end{equation}
has a high-dimensional analogue for the effective simplicial branching
random walk.
\end{thm*}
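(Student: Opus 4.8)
The plan is to reduce the branching dynamics to a single \emph{signed} walk by linearity of expectation, and then run the classical first-return (renewal) decomposition on that walk, letting the orientation bookkeeping collapse the two ``flavours'' of return into a single effective one. First I would record the quantities playing the roles of the two sides of \eqref{eq:return_prob_RW}. The analogue of the expected number of visits to $v$ is the total effective occupation
\[
R(\sigma,\sigma)=E^{\sigma}\Big[\sum_{n\ge0}D_{n}(\sigma)\Big]=\sum_{n\ge0}\mathscr{E}_{n}(\sigma,\sigma),
\]
while the analogue of the return probability is an effective first-return quantity $S(\sigma)-\overline{S}(\sigma)$, where $S(\sigma)$ (resp. $\overline{S}(\sigma)$) sums, over all times $k\ge1$, the effective weight of paths that leave $\sigma$ and return to the underlying unoriented $(d-1)$-simplex for the first time at time $k$ with the same orientation $\sigma$ (resp. the opposite orientation $\overline{\sigma}$).

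The first and conceptually most important step is to show that $\left(\mathscr{E}_{n}\right)_{n\ge0}$ is the $n$-step kernel of a single self-adjoint signed walk. By the branching property and linearity of expectation, the mean occupation $m^{n}(\sigma,\tau):=E^{\sigma}[N_{n}(\tau)]$ is the $n$-th power of the one-step mean matrix $m$, and the orientation-reversal symmetry of the splitting rule gives $m(\overline{\sigma},\overline{\tau})=m(\sigma,\tau)$ and $m(\overline{\sigma},\tau)=m(\sigma,\overline{\tau})$. Taking the difference over the two target orientations then yields the recursion
\[
\mathscr{E}_{n+1}(\sigma,\tau)=\sum_{\rho\in X_{\pm}^{d-1}}m(\sigma,\rho)\,\mathscr{E}_{n}(\rho,\tau),
\]
together with the antisymmetry $\mathscr{E}_{n}(\overline{\sigma},\tau)=-\mathscr{E}_{n}(\sigma,\tau)$. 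Hence $\mathscr{E}_{n}=M^{n}$ for the effective mean operator $M$ acting on antisymmetric $(d-1)$-cochains, and $M$ is self-adjoint; this is exactly what makes ``first return'' meaningful despite the branching, since expectations are governed by a genuine single-particle signed walk rather than by the particle cloud itself.

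With this in hand I would carry out the first-passage decomposition for the operator $M$ at the unoriented simplex underlying $\sigma$. Conditioning on the first time $k$ at which the signed walk revisits this simplex, and using the Markov property to restart, gives the renewal equation
\[
\mathscr{E}_{n}(\sigma,\sigma)=\sum_{k=1}^{n}\big(f_{k}^{+}\,\mathscr{E}_{n-k}(\sigma,\sigma)+f_{k}^{-}\,\mathscr{E}_{n-k}(\overline{\sigma},\sigma)\big),\qquad n\ge1,
\]
where the first-return weights $f_{k}^{\pm}$ define $S(\sigma)=\sum_{k}f_{k}^{+}$ and $\overline{S}(\sigma)=\sum_{k}f_{k}^{-}$. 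The crucial simplification is now the antisymmetry $\mathscr{E}_{n-k}(\overline{\sigma},\sigma)=-\mathscr{E}_{n-k}(\sigma,\sigma)$, which collapses the two return flavours into the single effective kernel $f_{k}^{+}-f_{k}^{-}$, so that $r_{n}:=\mathscr{E}_{n}(\sigma,\sigma)$ obeys the scalar renewal recursion $r_{n}=\sum_{k=1}^{n}(f_{k}^{+}-f_{k}^{-})\,r_{n-k}$ with $r_{0}=1$. Passing to generating functions $R(\sigma,\sigma\mid z)=\sum_{n}r_{n}z^{n}$ turns this convolution into $R(\sigma,\sigma\mid z)=1+\big(S(\sigma\mid z)-\overline{S}(\sigma\mid z)\big)\,R(\sigma,\sigma\mid z)$, that is $R=\left(1-(S-\overline{S})\right)^{-1}$, the precise high-dimensional analogue of \eqref{eq:return_prob_RW}.

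Two points will require care. The first is the orientation bookkeeping in Step~2: one must check that the induced orientations in the splitting rule produce exactly the symmetry $m(\overline{\sigma},\tau)=m(\sigma,\overline{\tau})$, since this is what guarantees that $M$ is well defined and self-adjoint on antisymmetric cochains, and it is the feature of the high-dimensional setting absent in the graph case. The second, and the main analytic obstacle, is the passage from the formal (or for-$|z|<1$) identity to the value $z=1$ asserted in the theorem: this requires absolute convergence of $\sum_{n}\mathscr{E}_{n}$, which is where the laziness hypothesis $p>\frac{d-1}{d+1}$ enters, forcing the spectrum of $M$ into $[0,1]$ (and, in the transient regime, bounded away from $1$ on the relevant subspace) so that $R=(I-M)^{-1}$ and the series manipulations are justified.
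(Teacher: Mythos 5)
Your proposal is correct and takes essentially the same route as the paper: the precise statement (Theorem \ref{thm:R_via_S_and_overline=00007BS=00007D}) is exactly the generating-function identity $\mathcal{G}\left(z\right)=\frac{1}{1-\mathfrak{F}\left(z\right)}$, which the paper derives from the convolution identity $\mathscr{E}_{n}\left(\sigma,\sigma\right)=\sum_{k=1}^{n}\mathscr{F}_{k}\left(\sigma,\sigma\right)\mathscr{E}_{n-k}\left(\sigma,\sigma\right)$ (Lemma \ref{lem:relations_e_K}(3)) by summing power series inside the radii of convergence, i.e.\ your renewal-plus-generating-function scheme with the same antisymmetry collapse of the two return orientations. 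The only inessential differences are that you obtain the renewal identity by a direct first-passage decomposition of the mean matrix after the many-to-one reduction $\mathscr{E}_{n}=M^{n}$, whereas the paper proves it by induction on $n$ via the one-step Markov property of the particle system (keeping the first-visit quantities $K_{n}$ defined through ancestors), and that the paper only asserts the identity for $z$ within the radii of convergence with $\mathfrak{F}\left(z\right)\neq1$, so your concerns about justifying the value $z=1$ are not needed for the statement as formulated.
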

Next, we discuss some applications of ESBRW to the study of simplicial
complexes. The $d$-dimensional counterpart of the $k$-regular tree,
called $k$-regular arboreal $d$-complex, was defined in \cite{PR12}.
It is obtained by attaching to a $\left(d-1\right)$-simplex $k$
new $d$-simplexes and then adding recursively to every new $\left(d-1\right)$-simplex
$\left(k-1\right)$ new $d$-simplexes (see also Definition \ref{def:regular_arboreal_complex}).
By generalizing ideas of Kesten \cite{Ke59} to ESBRW, we are able
to find the spectral measure of the ``transition'' operator $\mathscr{A}_{0}$
(see Lemma \ref{lem:comparison_of_models} for the definition). 
\begin{thm}
\label{thm:The_spectral_measure_of_arboreal_complexes}The spectral
measure $\mu_{d,k}$ of $\mathscr{A}_{0}=I-\Delta^{+}$ for the $k$-regular
arboreal $d$-complex is given by 
\[
\mu_{d,k}\left(A\right)=\begin{cases}
\int_{A}\rho_{d,k}\left(x\right)dx+\frac{d+1-k}{d+1}\chi_{1\in A} & ,\quad k<d+1\\
\int_{A}\rho_{d,k}\left(x\right)dx & ,\quad k\geq d+1
\end{cases},
\]
where $\chi$ is the indicator function, 
\[
\rho_{d,k}\left(x\right)=\frac{\sqrt{4\left(k-1\right)d-\left(kx+\left(d-1\right)\right)^{2}}}{2\pi\left(d+x\right)\left(1-x\right)}\chi_{x\in I_{d,k}}
\]
and 
\[
I_{d,k}=\left[\frac{1-d-2\sqrt{\left(k-1\right)d}}{k},\frac{1-d+2\sqrt{\left(k-1\right)d}}{k}\right].
\]

\end{thm}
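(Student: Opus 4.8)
The plan is to compute the spectral measure via a generating-function / resolvent identity that Kesten used in the graph case, now transported to the ESBRW setting. The key object is the operator $\mathscr{A}_0 = I - \Delta^+$ acting on the space of $(d-1)$-forms, and the spectral measure $\mu_{d,k}$ is the measure appearing in the spectral representation $\langle \mathscr{A}_0^n \delta_\sigma, \delta_\sigma\rangle = \int x^n\, d\mu_{d,k}(x)$ for a fixed oriented $(d-1)$-simplex $\sigma$. By the homogeneity of the $k$-regular arboreal complex, this diagonal matrix element does not depend on $\sigma$, so it suffices to track a single ``return'' quantity. The strategy is therefore to identify $\int x^n\, d\mu_{d,k}(x)$ with the expected effective count $\mathscr{E}_n(\sigma,\sigma)$ of the ESBRW (up to the translation $\mathscr{A}_0 = I - \Delta^+$ built into Lemma \ref{lem:comparison_of_models}), and then to compute the generating function $F(z) = \sum_{n\geq 0} \mathscr{E}_n(\sigma,\sigma)\, z^n$ explicitly using the tree structure.

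**The generating-function computation.** First I would exploit the recursive, self-similar structure of the arboreal complex. Because the complex is built by attaching $d$-simplexes tree-like with no cycles, a particle emitted from $\sigma$ into a newly attached branch can only return to $\sigma$ by retracing, and the sub-complex hanging off any new $(d-1)$-simplex is again a rooted arboreal complex with branching parameter $k-1$. This is exactly the situation where Kesten's first-return decomposition applies: I would introduce the first-return generating function and the full return generating function, derive a self-consistent algebraic (quadratic) equation relating them — the analogue of the identity in \eqref{eq:return_prob_RW} and Theorem \ref{thm:R_via_S_and_overline=00007BS=00007D} — and solve it. The branching (each particle splits into $d$ new particles) and the two orientations enter as combinatorial weights: the factor $d$ from the number of offspring and the sign bookkeeping from $D_n(\sigma) = N_n(\sigma) - N_n(\overline\sigma)$ should combine to give a quadratic whose discriminant is $4(k-1)d - (kx + (d-1))^2$, precisely the radicand in $\rho_{d,k}$. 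Solving the quadratic yields a closed form for $F(z)$ as a resolvent-type expression $F(z) = \int \frac{d\mu_{d,k}(x)}{1 - xz}$.

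**Extracting the measure.** Once $F(z)$ (equivalently the Stieltjes/Cauchy transform of $\mu_{d,k}$) is in closed form, I would recover the measure by the Stieltjes inversion formula: the absolutely continuous part $\rho_{d,k}(x)\,dx$ comes from the jump of the imaginary part of the transform across the cut, which is supported exactly where the discriminant is positive, i.e. on the interval $I_{d,k}$; this produces the density with the square-root singularity and the denominator $2\pi(d+x)(1-x)$. The factor $(d+x)(1-x)$ should emerge from the poles of the transform corresponding to the endpoints of the spectrum of $\mathscr{A}_0$, namely $x = 1$ (from the $I$ in $\mathscr{A}_0 = I - \Delta^+$, the ``harmonic'' direction) and $x = -d$. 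The atom at $x = 1$ of mass $\frac{d+1-k}{d+1}$ in the regime $k < d+1$ corresponds to a genuine pole of $F(z)$ at $z = 1$ — a residue computation — reflecting that for $k$ small the operator $\mathscr{A}_0$ has $1$ as an atom (nontrivial reduced cohomology / slow escape), whereas for $k \geq d+1$ this pole is absorbed into the continuous spectrum and the atom disappears. I would verify the total mass is $1$ as a consistency check and confirm the two regimes match at $k = d+1$.

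**Main obstacle.** The hard part will be the sign/orientation bookkeeping in the first-return decomposition: because the ESBRW tracks the \emph{difference} $N_n(\sigma) - N_n(\overline\sigma)$ rather than a genuine probability, the ``return generating function'' is not a probability generating function but a signed quantity, and the branching into $d$ offspring means a single return event can split into contributions with competing signs. Getting the exact quadratic — in particular pinning down the coefficient $k$ of $x$ and the constant $(d-1)$ inside the square in the discriminant, together with the correct $(k-1)d$ factor from the branch-and-return combinatorics — requires carefully accounting for how many of the $d$ offspring land on same-origin versus opposite-orientation faces at each step, and summing the signed contributions over all self-avoiding return paths in the tree. Once that algebraic identity is correct, the inversion and residue computations are routine.
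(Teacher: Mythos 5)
Your proposal follows essentially the same route as the paper: identify the moments $\int x^n\,d\mu_{d,k}$ with $\mathscr{E}_n(\sigma,\sigma)$ via Lemma \ref{lem:comparison_of_models}, use the tree structure and Theorem \ref{thm:R_via_S_and_overline=00007BS=00007D} to set up a Kesten-style first-return decomposition whose branch generating function satisfies a quadratic (with exactly the discriminant $4(k-1)d-(kx+(d-1))^2$), then recover the density by Stieltjes inversion on $I_{d,k}$ and the atom at $x=1$ by a residue-type limit $\lim_{\varepsilon\downarrow 0}(-i\varepsilon)\mathcal{S}(1+i\varepsilon)$. The sign/orientation bookkeeping you flag as the main obstacle is precisely what the paper's Lemma \ref{lem:Relation_between_generation_functions_for_arboreal_complexes} handles (the $-\frac{d-1}{k}z\,\mathcal{U}(z)$ term and the identity $E^{\overline{\sigma}}\left[K_{k}\left(\sigma'\right)\right]=E^{\sigma}\left[K_{k}\left(\overline{\sigma'}\right)\right]$ used to get the $\left(\mathcal{U}(z)\right)^{2}$ factor), so your outline is consistent with the paper's proof.
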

In particular, this gives a new proof of the fact that the spectrum
of $\mathscr{A}_{0}$ is $I_{d,k}$ for $k\geq d+1$ and $I_{d,k}\cup\left\{ 1\right\} $
when $k<d+1$, which is the content of \cite[Theorem 3.3]{PR12}.

As a corollary of Theorem \ref{thm:The_spectral_measure_of_arboreal_complexes}
we obtain the following transience/recurrence classification for regular
arboreal complexes:
\begin{cor}
\label{cor:Transience_and_recurrence_of_T^d_k} The effective simplicial
branching random walk on $T_{k}^{d}$ is recurrent, i.e., $\sum_{n=0}^{\infty}\mathscr{E}_{n}^{p}\left(\sigma,\sigma\right)=\infty$
for every $p>\frac{d-1}{d+1},$ if $k\leq d+1$, and transient if
$k>d+1$. 
\end{cor}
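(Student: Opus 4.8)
The plan is to derive the transience/recurrence dichotomy directly from the explicit spectral measure $\mu_{d,k}$ furnished by Theorem \ref{thm:The_spectral_measure_of_arboreal_complexes}. Since $\mathscr{A}_0 = I - \Delta^+$ is a self-adjoint operator, the spectral theorem lets me write the diagonal heat-kernel entries as moments of $\mu_{d,k}$. Concretely, I expect the lazy ESBRW heat kernel to satisfy an identity of the form $\mathscr{E}_n^p(\sigma,\sigma) = \int (p + (1-p)x)^n \, d\mu_{d,k}(x)$, where the integrand is the $n$-th power of the symbol of the $p$-lazy transition operator $p I + (1-p)\mathscr{A}_0$ acting on the spectral variable $x$. (The precise form of this integral representation should follow from the comparison in Lemma \ref{lem:comparison_of_models} together with the definition of the heat kernel as an expectation.) The question of recurrence then reduces to whether the series $\sum_{n=0}^\infty \mathscr{E}_n^p(\sigma,\sigma)$ diverges.

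Granting the integral representation, I would swap summation and integration (justified by nonnegativity via Tonelli) to obtain
\[
\sum_{n=0}^\infty \mathscr{E}_n^p(\sigma,\sigma) = \int_{I_{d,k}} \frac{1}{1 - \bigl(p + (1-p)x\bigr)} \, d\mu_{d,k}(x) + (\text{atom contribution}),
\]
valid whenever $p + (1-p)x < 1$ on the relevant support, i.e. for $x < 1$. The geometric series $\sum_n r^n = (1-r)^{-1}$ with $r = p + (1-p)x$ diverges exactly when $r \to 1$, which happens as $x \to 1$. Thus the convergence of the series is governed entirely by the behavior of $\mu_{d,k}$ near the spectral endpoint $x = 1$, and the factor $1 - r = (1-p)(1-x)$ contributes a pole of order one there.

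The crux is therefore a local analysis of $\mu_{d,k}$ at $x = 1$, and this is where the two regimes separate. When $k < d+1$, the measure has an atom of mass $\frac{d+1-k}{d+1}$ precisely at $x = 1$; this atom forces $\sum_n \mathscr{E}_n^p(\sigma,\sigma) = \frac{d+1-k}{d+1}\sum_n 1 = \infty$, giving recurrence immediately. For the absolutely continuous part one must check whether $\int \rho_{d,k}(x)\,(1-x)^{-1}\,dx$ converges near the relevant endpoint; since $\rho_{d,k}$ has a $(1-x)^{-1}$ denominator compensated only by the square-root factor $\sqrt{4(k-1)d - (kx+(d-1))^2}$, the integrability near $x=1$ hinges on whether $1$ lies at, inside, or outside the band $I_{d,k}$. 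The key computation is to verify that the right endpoint of $I_{d,k}$, namely $\frac{1-d+2\sqrt{(k-1)d}}{k}$, equals $1$ exactly when $k = d+1$, exceeds $1$ when $k > d+1$, and falls short of $1$ when $k < d+1$.

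The main obstacle is thus the threshold case $k = d+1$: here the atom vanishes and $x=1$ sits at the right edge of the support, so the $(1-x)^{-1}$ singularity meets the vanishing square-root factor $\sqrt{4(k-1)d-(kx+(d-1))^2}$. I expect a Taylor expansion near $x=1$ to show that the square root vanishes like $\sqrt{1-x}$, so that $\rho_{d,k}(x)(1-x)^{-1} \sim C (1-x)^{-3/2}$, whose integral diverges — yielding recurrence at $k=d+1$ as claimed. For $k > d+1$ the point $x=1$ lies strictly outside $I_{d,k}$, so the integrand is bounded on the support and the series converges, giving transience. Care is needed to confirm that the integral representation and the interchange of sum and integral are legitimate for all admissible $p > \frac{d-1}{d+1}$ (guaranteeing $p + (1-p)x < 1$ throughout $I_{d,k}$), and that the endpoint asymptotics are robust to the choice of $p$; once these are in place, the classification follows by assembling the three cases.
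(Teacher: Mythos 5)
Your overall strategy is exactly the paper's: reduce $\sum_{n}\mathscr{E}_{n}^{p}\left(\sigma,\sigma\right)$ to the integral $\int\frac{1}{1-x}\,d\mu_{d,k}\left(x\right)$ (up to the harmless factor coming from $\mathscr{A}_{p}=pI+\left(1-p\right)\mathscr{A}_{0}$), and then classify by the behavior of $\mu_{d,k}$ at $x=1$. The atom argument for $k<d+1$ and the $\left(1-x\right)^{-3/2}$ endpoint analysis for $k=d+1$ are both correct and match the paper. However, your declared ``key computation'' is wrong in precisely the case that matters for transience: you assert that the right endpoint $\frac{1-d+2\sqrt{\left(k-1\right)d}}{k}$ of $I_{d,k}$ \emph{exceeds} $1$ when $k>d+1$. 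In fact it is at most $1$ for \emph{all} $k$, with equality if and only if $k=d+1$: comparing it with $1$ amounts to comparing $2\sqrt{\left(k-1\right)d}$ with $\left(k-1\right)+d$, and AM--GM gives $\left(k-1\right)+d\geq2\sqrt{\left(k-1\right)d}$ with equality iff $k-1=d$. Your stated inequality contradicts your own next sentence (that $x=1$ lies strictly outside $I_{d,k}$ for $k>d+1$), and if it were true it would break the proof: $x=1$ would then be an interior point of the band, $\rho_{d,k}$ would behave like $C/\left(1-x\right)$ there (so it could not even be a probability density), and the series would diverge, i.e.\ you would conclude recurrence rather than transience for $k>d+1$. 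The correct fact --- right endpoint strictly less than $1$ for every $k\neq d+1$, hence $\frac{1}{1-x}$ bounded on the support and the integral finite when $k>d+1$ --- is exactly what the transience half needs, and the fix is the one-line AM--GM computation above.

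A secondary point: your justification of the sum/integral interchange ``by nonnegativity via Tonelli'' is not quite right, since $\left(p+\left(1-p\right)x\right)^{n}$ need not be nonnegative on the support of $\mu_{d,k}$ for all $p>\frac{d-1}{d+1}$; nonnegativity on $\left[-d,1\right]$ is only guaranteed when $p\geq\frac{d}{d+1}$ (for instance, for $d=1$, $k=2$ the support reaches $x=-1$, where $p+\left(1-p\right)x=2p-1<0$ if $p<\frac{1}{2}$). The interchange still holds: treating the atom separately (it contributes $\left(N+1\right)\mu_{d,k}\left(\left\{ 1\right\} \right)$ to the $N$-th partial sum), on the absolutely continuous part the partial sums equal $\int\frac{1-\left(p+\left(1-p\right)x\right)^{N+1}}{\left(1-p\right)\left(1-x\right)}\,d\mu_{d,k}\left(x\right)$, whose integrand is nonnegative and converges almost everywhere, so Fatou yields divergence when $\int\frac{1}{1-x}\,d\mu_{d,k}\left(x\right)=\infty$ and dominated convergence yields the finite limit otherwise. (The paper's own appeal to monotone convergence glosses over the same sign issue.) With these two repairs your argument is complete and coincides with the paper's proof.
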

Note that this implies the same recurrence/transience classification
for the $\left(d-1\right)$-random walk from \cite{PR12}.

Our last result concerns the Dirichlet problem on simplicial complexes.
Recall that for a finite graph $G=\left(V,E\right)$, $\emptyset\neq A\subset V$
and $f:A\to\mathbb{R}$ the unique solution to the Dirichlet problem%
\footnote{The Dirichlet problem for a given triplet $\left(G,A,f\right)$ is
to find  a solution $F:V\to\mathbb{R}$ to the boundary value problem
$\Delta F=0$ on $V\backslash A$ and $F=f$ on $A$, where $\Delta$
is the graph Laplacian.%
}, can be written using the random walk as $F\left(v\right)=E^{v}\left[f\left(Y_{\tau}\right)\right]$,
where $\left(Y_{n}\right)_{n\geq0}$ is the simple random walk on
$G$ and $\tau=\inf\left\{ k\geq0\,:\, Y_{k}\in A\right\} $. 

In Section \ref{sec:Dirichlet-problem}, we discuss the high-dimensional
analogue of the Dirichlet problem and show the following:
\begin{thm*}
For every finite complex $X$, every subset $A$ of the $\left(d-1\right)$-simplexes
satisfying a certain homological condition and every form $f$ on
$A$, there exists a unique solution to the Dirichlet problem that
can be expressed in terms of the ESBRW. 
\end{thm*}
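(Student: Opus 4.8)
The plan is to mirror the classical graph-theoretic proof that the Dirichlet problem has a unique solution expressible via the random walk, replacing the scalar Laplacian by the upper Laplacian $\Delta^{+}$ acting on $(d-1)$-forms and replacing the random walk by the ESBRW.  First I would set up the high-dimensional boundary value problem precisely: given a finite $d$-complex $X$, a set $A$ of $(d-1)$-simplexes, and a form $f$ on $A$, I seek a $(d-1)$-form $F$ with $\Delta^{+}F = 0$ off $A$ and $F = f$ on $A$.  The natural ``homological condition'' on $A$ is exactly the one guaranteeing that the restriction of $\Delta^{+}$ to forms supported on the complement of $A$ is invertible; concretely, I expect this to be the requirement that the complement $A^{c}$ supports no nontrivial element of the relevant (co)homology, i.e.\ that $\Delta^{+}$ restricted to $C^{d-1}(A^{c})$ has trivial kernel.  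Under this condition, existence and uniqueness of $F$ follow from elementary linear algebra: decompose $F = F_{0} + F_{1}$ where $F_{1}$ extends $f$ arbitrarily and $F_{0}$ is supported on $A^{c}$, and solve the invertible linear system $\Delta^{+}F_{0} = -\Delta^{+}F_{1}$ on $A^{c}$.  This gives a unique solution immediately; the real content of the theorem is the probabilistic representation.

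To obtain the stochastic formula, I would invoke the identity from Theorem~\ref{thm:R_via_S_and_overline=00007BS=00007D} relating the expected number of visits (through the heat kernel $\mathscr{E}_{n}$) to return probabilities, together with the fact that, for $p > \frac{d-1}{d+1}$, the Green-type operator $G = \sum_{n=0}^{\infty}\mathscr{E}_{n}$ restricted to the transient region $A^{c}$ converges and inverts the appropriate restriction of $\Delta^{+}$ (up to the explicit normalization constant coming from the lazy parameter $p$).  The key algebraic point is that the ESBRW transition operator $\mathscr{A}_{0} = I - \Delta^{+}$ governs the one-step expectation $E^{\sigma}[D_{1}] = \mathscr{A}_{0}\delta_{\sigma}$, so that $\mathscr{E}_{n} = \mathscr{A}_{0}^{n}$ and the resolvent identity gives $\sum_{n} \mathscr{A}_{0}^{n}\big|_{A^{c}} = (\Delta^{+}|_{A^{c}})^{-1}$ precisely when the homological condition forces the spectrum of $\mathscr{A}_{0}|_{A^{c}}$ to avoid $1$.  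I would then define the stopping-time analogue: let $\tau$ be the first time a particle's face lands in $A$, and express the solution as $F(\sigma) = E^{\sigma}[f(\text{oriented face at absorption time }\tau)]$, where the expectation is taken over the branching process with signs tracking orientation via the $D_{n}$ formalism.

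The main obstacle, and the step requiring the most care, is handling the \emph{branching} and \emph{signed} nature of the process, which has no counterpart in the scalar graph case.  In the graph Dirichlet problem a single random walker is absorbed on $A$ and one reads off $f$ at the absorption vertex; here each particle splits into $d$ children at every step and carries an orientation, so the ``value at absorption'' must be summed over all particles that reach $A$, weighted by the sign $N_{n}(\sigma) - N_{n}(\overline{\sigma})$ that the effective process $D_{n}$ records.  I would therefore need to verify that the absorbed effective mass $E^{\sigma}[\sum_{\text{absorbed particles}} \pm f]$ is finite and well-defined — this is where the hypothesis $p > \frac{d-1}{d+1}$ enters, guaranteeing the subcriticality that makes the total absorbed signed mass converge — and that it satisfies the discrete harmonicity $\Delta^{+}F = 0$ on $A^{c}$ by a one-step conditioning (first-step analysis) argument.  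The subtlety is that positivity is lost, so I cannot appeal to a maximum principle for uniqueness; instead uniqueness must come entirely from the linear-algebraic invertibility established via the homological condition, and the probabilistic formula must be shown to coincide with that unique solution by checking it solves the same boundary value problem.
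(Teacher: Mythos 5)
Your proposal is correct and follows essentially the same route as the paper: the homological condition is the invertibility of $\Delta_{X\backslash A}^{+}$ (equivalently $H_{d}\left(X,A\right)=0$), existence and uniqueness come from elementary linear algebra on the restricted system (the paper uses a block decomposition of $\Delta^{+}$, equivalent to your extension-plus-correction argument), and the probabilistic representation comes from showing that the Green function of the $A$-absorbing ESBRW, $\frac{1}{1-p}\mathcal{G}_{A}^{p}$, inverts $\Delta_{X\backslash A}^{+}$ via exactly the first-step analysis you describe, with uniqueness resting on invertibility rather than any maximum principle. Two minor points: the return-time generating-function identity you cite is not actually needed, and your claim that the Neumann series converges once the spectrum of $\mathscr{A}_{0}\big|_{A^{c}}$ avoids $1$ is imprecise --- one needs the spectrum of the lazy operator $I-\left(1-p\right)\Delta_{X\backslash A}^{+}$ to lie in $\left(-1,1\right)$, which is precisely where the hypothesis $p>\frac{d-1}{d+1}$ enters, as you yourself note in the following sentence.
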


\subsection{Structure of the paper}

The remainder of this paper is organized as follows: 

In Section \ref{sec:Notation-and-some_useful_facts} we introduce
the relevant notation and definitions regarding: simplicial complexes
(Subsection \ref{subsec:Simplicial-complexes}), high-dimensional
Laplacians (Subsection \ref{subsec:High-dimensional-Laplacians}),
discrete Hodge theory (Subsection \ref{subsec:Discrete-Hodge-theory})
and the $\left(d-1\right)$-walk (Subsection \ref{subsec:The--walk}). 

In Section \ref{sec:The--branching-random-walk}, we define the SBRW
and the ESBRW, discuss some of their basic properties and prove the
first two main results, Theorem \ref{thm:Finite_complexes_BRW_vs_homology}
and Theorem \ref{thm:R_via_S_and_overline=00007BS=00007D}. 

Section \ref{sec:arboreal_complexes} deals with application of the
ESBRW to the study of arboreal complexes and provides the proof of
Theorem \ref{thm:The_spectral_measure_of_arboreal_complexes} and
Corollary \ref{cor:Transience_and_recurrence_of_T^d_k}. 

In Section \ref{sec:Dirichlet-problem}, the high-dimensional Dirichlet
problem is discussed, and in particular how the ESBRW can be used
to construct its solutions. 

Section \ref{sec:Lower---branching_random_walk} explains how to construct
a similar particle process corresponding to the lower Laplacian, thus
allowing to generalize most of the results from previous sections
to this setting. 

The appendix provides the proof of some claims stated throughout the
manuscript. 

\bigskip{}

\begin{acknowledgement*}
This research has been partially supported by an ETH fellowship. The
author would like to thank Mayra Bermúdez, Adrien Kassel, Xinyi Li
and Pierre-François Rodriguez for some useful discussions. 
\end{acknowledgement*}

\section{Notation and some useful facts \label{sec:Notation-and-some_useful_facts}}

This Section collects definitions, notation and previously known results
used in the paper. Notion related to simplicial complexes can be found
in Subsection \ref{subsec:Simplicial-complexes}. The definition of
the high-dimensional Laplacians and the boundary/coboudnary operators
appear in Subsection \ref{subsec:High-dimensional-Laplacians}. A
short summary on discrete Hodge theory is the content of Subsection
\ref{subsec:Discrete-Hodge-theory}. Finally, Subsection \ref{subsec:The--walk}
recalls the definition of the $\left(d-1\right)$-walk from \cite{PR12}
as well as some of the results proved there regarding its connection
to spectral and homological properties of the complex.

\subsection{Simplicial complexes\label{subsec:Simplicial-complexes}}

A simplicial complex $X$ is a collection of subsets of some countable
set $V$ that is closed under the operation of taking subsets. That
is, if $\tau\in X$ and $\sigma\subset\tau$ then $\sigma\in X$.
Elements of $X$ are called simplexes or cells and the dimension of
a simplex $\sigma\in X$ is defined to be $\left|\sigma\right|-1$.
A $j$-dimensional simplex is called a $j$-simplex or a $j$-cell.
The dimension of $X$, denoted by $d$, is defined to be $\max_{\sigma\in X}\mbox{dim}\left(\sigma\right)$.
A $d$-dimensional simplicial complex is called a $d$-complex for
short. We denote by $X^{j}$ the set of $j$-dimensional cells. The
degree of a $j$-cell, denoted $\mathrm{deg}\left(\sigma\right)$,
is the number of $\left(j+1\right)$-cells containing it and the set
of such $\left(j+1\right)$-cells, also known as its cofaces, is denoted
by $\mathrm{cf}\left(\sigma\right)=\left\{ \tau\in X^{j+1}\,:\,\sigma\subset\tau\right\} $. 

For $j\geq1$, each $j$-cell has two possible orientation, corresponding
to the ordering of its vertices up to an even permutation. Oriented
cells are denoted by square brackets; for example, the unoriented
$2$-cell $\left\{ u,v,w\right\} $ has two orientation $\left[u,v,w\right]=\left[w,u,v\right]=\left[v,w,u\right]$
and $\left[v,u,w\right]=\left[w,v,u\right]=\left[u,w,v\right]$. Given
an oriented cell $\sigma$ we denote by $\overline{\sigma}$ or $\left(-1\right)\sigma$
the same cell with the opposite orientation. The set of all oriented
$j$ cells is denoted by $X_{\pm}^{j}$. We also denote $X_{\pm}^{j}=X^{j}$
for $j=-1,0$. The faces of a $j$-cell $\sigma=\left\{ v_{0},\ldots,v_{j}\right\} $,
abbreviated $\mathrm{face}\left(\sigma\right)$, are the $\left(j-1\right)$-cells
$\left\{ \sigma\backslash v_{i}\right\} _{i=0}^{j}$. An oriented
$j$-cell $\sigma=\left[v_{0},\ldots,v_{j}\right]$, $j\geq2$ induces
an orientation on its faces given by $\left\{ \left(-1\right)^{i}\left[v_{0},\ldots,v_{i-1},v_{i+1},\ldots,v_{j}\right]\right\} _{i=0}^{j}$.
In a similar manner an oriented $j$-cell $\sigma$ induces an orientation
on its co-faces as follows: Given a cell $\sigma$ and a vertex $v\notin\sigma$
such that $v\sigma:=\left\{ v\right\} \cup\sigma$ is a coface of
$\sigma$ we write shortly $v\vartriangleleft\sigma$. If $\sigma=\left[\sigma_{0},\ldots,\sigma_{k}\right]$
is oriented and $v\vartriangleleft\sigma$, then $v\sigma$ inherits
the orientation $\left[v,\sigma_{0},\ldots,\sigma_{k}\right]$. 

The space of $j$-forms on $X$, denoted $\Omega^{j}=\Omega^{j}\left(X\right)$,
contains all function from $X_{\pm}^{j}$ to $\mathbb{R}$ which are
anti-symmetric with respect to a change of orientation, namely 
\[
\Omega^{j}=\left\{ f:X_{\pm}^{j}\to\mathbb{R}\Big|f\left(\overline{\sigma}\right)=-f\left(\sigma\right)\,\,\forall\sigma\in X_{\pm}^{j}\right\} .
\]
For $j=-1,0$ there are no orientations and thus $\Omega^{0}$ can
be identified with the space of functions on the vertices, while $\Omega^{-1}$
can naturally be identified with $\mathbb{R}$. To every $\sigma\in X_{\pm}^{j}$
one can associate a Dirac $j$-form $\ind_{\sigma}$ defined by 
\[
\ind_{\sigma}\left(\sigma'\right)=\begin{cases}
1 & \quad\sigma'=\sigma\\
-1 & \quad\sigma'=\overline{\sigma}\\
0 & \quad\mbox{otherwise}
\end{cases}.
\]

We also recall the following definitions from \cite{PR12}:
\begin{defn}[{Simplices neighboring relation \cite[Definition 2.1]{PR12}}]
\label{def:neighboring_relation} Let $1\leq j\leq d-1$. Two oriented
$j$-cells $\sigma,\sigma'\in X_{\pm}^{j}$ are called \emph{neighbors
}(denoted $\sigma\sim\sigma'$ or $\sigma\overset{\shortuparrow}{\sim}\sigma'$)
if $\sigma\cup\sigma'$ is a $\left(j+1\right)$-cell and the orientation
induced by $\sigma$ on $\sigma\cup\sigma'$ is opposite to the one
induced on it by $\sigma'$. In the case $j\geq2$ this is also equivalent
to the assumption that $\sigma\cup\sigma'\in X^{j+1}$, and that the
$\left(j-1\right)$-cell $\sigma\cap\sigma'$ inherits the same orientation
from both $\sigma$ and $\sigma'$. In the case $j=0$ the relation
$\sim$ is used to denote the usual graph neighboring relation, that
is $\sigma\sim\sigma'$ if both $0$-cells (vertices) are part of
a common $1$-cell (an edge). 
\end{defn}

\begin{defn}[{$k$-connectedness and disorientability \cite[Definitions 2.2, 2.6]{PR12}}]
$ $
\begin{enumerate}
\item Let $0\leq k\leq d-1$. We say that $X$ is $k$-connected if for
every pair of oriented $k$-cells $\sigma,\sigma'$ there exists a
chain $\sigma=\sigma_{0}\sim\sigma_{1}\sim\ldots\sim\sigma_{n}=\sigma'$.
Moreover, the existence of such a chain defines an equivalence relation
on the $k$-cells of $X$, whose equivalence classes are called the
$k$-components of $X$.
\item Let $0\leq k\leq d-1$. A \emph{$k$-disorientation }of a $d$-complex
is a choice of orientation $X_{+}^{k+1}$ for its $\left(k+1\right)$-cells,
so that whenever $\sigma,\sigma'\in X_{+}^{k+1}$ intersect in a $k$-cell
they induce the same orientation on it. If $X$ has a $k$-disorientation
it is said to be $k$-\emph{diorientable.}
\end{enumerate}
\end{defn}
A corresponding neighboring relation using faces instead of cofaces
was defined in \cite{MS13}.
\begin{defn}[{Simplices adjacency relation \cite[Definition 3.1]{MS13}}]
\label{def:adjacency_relation} Let $2\leq j\leq d$. Two oriented
$j$-cells $\sigma,\sigma'\in X_{\pm}^{j}$ are called adjacent (denoted
$\sigma\underset{\shortdownarrow}{\sim}\sigma'$) if $\sigma\cap\sigma'$
is a $\left(j-1\right)$-cell that inherits opposite orientations
from $\sigma$ and $\sigma'$. If $\sigma\cup\sigma'\in X^{j+1}$
this is equivalent to saying that they induce the same orientation
on their joint coface. In the case $j=1$, two oriented $1$-cells
(edges) $\sigma,\sigma'\in X_{\pm}^{1}$ are called adjacent if $\sigma\cap\sigma'$
is a vertex and exactly one of the edges points towards it. 
\end{defn}

\subsection{High dimensional Laplacians \label{subsec:High-dimensional-Laplacians}}

Let $X$ be a $d$-complex and $0\leq k\leq d$. The $k^{th}$ coboudnary
operator $\delta_{k}:\Omega^{k-1}\to\Omega^{k}$ is defined by 
\[
\delta_{k}f\left(\sigma\right)=\sum_{i=0}^{k}\left(-1\right)^{i}f\left(\sigma\backslash\sigma_{i}\right),\quad\forall f\in\Omega^{k}.
\]
\emph{Given a weight function $w:X\to\left(0,\infty\right)$, one
can introduce the Hilbert spaces 
\[
\Omega_{L^{2}}^{k}=\Omega_{L^{2}}^{k}\left(X\right)=\left\{ f\in\Omega^{k}\left(X\right)\,:\,\left\langle f,f\right\rangle <\infty\right\} ,
\]
with inner product 
\[
\left\langle f,g\right\rangle =\sum_{\sigma\in X^{k}}w\left(\sigma\right)f\left(\sigma\right)g\left(\sigma\right),\quad\forall f,g\in\Omega^{k}.
\]
Note that the sum is over unoriented $k$-cells, and that it is well
defined since the product $f\left(\sigma\right)g\left(\sigma\right)$
is independent of the orientation.}
\begin{claim}
\label{claim:k-good_1}Given a weight function $w:X\to\left(0,\infty\right)$.
The operator $\delta_{k}$ is bounded if and only if $\sup_{\sigma\in X^{k-1}}\frac{1}{w\left(\sigma\right)}\sum_{\tau\in\mathrm{cf}\left(\sigma\right)}w\left(\tau\right)<\infty$.
\end{claim}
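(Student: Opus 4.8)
The plan is to prove the two directions separately by directly comparing $\|\delta_k f\|^2$ with $\|f\|^2$, writing $C := \sup_{\sigma \in X^{k-1}} \frac{1}{w(\sigma)} \sum_{\tau \in \mathrm{cf}(\sigma)} w(\tau)$ for the quantity in the statement. Throughout I would use that the Dirac forms $\{\ind_\sigma\}$, one per unoriented cell, form an orthogonal family with $\|\ind_\sigma\|^2 = w(\sigma)$, and that $|f(\sigma)|^2$ is well defined independently of the chosen orientation because $f(\overline{\sigma}) = -f(\sigma)$.

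For sufficiency, assume $C < \infty$. For a $k$-cell $\tau$ the value $\delta_k f(\tau)$ is a sum of the $k+1$ terms $(-1)^i f(\tau \setminus \tau_i)$, so the Cauchy--Schwarz inequality gives $|\delta_k f(\tau)|^2 \leq (k+1) \sum_{\sigma \in \mathrm{face}(\tau)} |f(\sigma)|^2$. Summing against the weights and interchanging the order of summation --- justified by Tonelli since all terms are non-negative --- I would use the combinatorial fact that $\sigma$ is a face of $\tau$ exactly when $\tau \in \mathrm{cf}(\sigma)$ to rewrite the double sum as $\sum_{\sigma \in X^{k-1}} |f(\sigma)|^2 \sum_{\tau \in \mathrm{cf}(\sigma)} w(\tau)$. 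The inner sum is at most $C\, w(\sigma)$ by the definition of $C$, which yields $\|\delta_k f\|^2 \leq (k+1) C \|f\|^2$ and hence $\|\delta_k\| \leq \sqrt{(k+1)C}$; in particular $\delta_k$ maps $\Omega_{L^2}^{k-1}$ into $\Omega_{L^2}^k$.

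For necessity, assume $\delta_k$ is bounded with norm $M$ and probe it with the test forms $\ind_\sigma$, $\sigma \in X^{k-1}$. I would check that $\delta_k \ind_\sigma(\tau)$ vanishes unless $\sigma$ is a face of $\tau$, and that for each coface $\tau \in \mathrm{cf}(\sigma)$ exactly one face of $\tau$ equals $\sigma$ as an unoriented cell, so that $|\delta_k \ind_\sigma(\tau)| = 1$. Consequently $\|\delta_k \ind_\sigma\|^2 = \sum_{\tau \in \mathrm{cf}(\sigma)} w(\tau)$, while $\|\ind_\sigma\|^2 = w(\sigma)$. Boundedness then forces $\sum_{\tau \in \mathrm{cf}(\sigma)} w(\tau) \leq M^2 w(\sigma)$ for every $\sigma$, i.e. $C \leq M^2 < \infty$.

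The computations are elementary, so I do not expect a serious obstacle; the one place to be careful is the necessity step, where the identity $\|\delta_k \ind_\sigma\|^2 = \sum_{\tau \in \mathrm{cf}(\sigma)} w(\tau)$ must be verified with the correct handling of orientations (each contribution being $\pm 1$, hence of square $1$), and the sufficiency step, where the interchange of summation and the face/coface correspondence should be stated cleanly.
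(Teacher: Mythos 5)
Your proposal is correct and follows essentially the same route as the paper's proof: Cauchy--Schwarz plus the face/coface interchange of summation for sufficiency, and testing against the Dirac forms $\ind_{\sigma}$ for necessity. (Your Cauchy--Schwarz constant $k+1$ is in fact the accurate one; the paper writes $\binom{k}{2}$, but the exact constant is immaterial to boundedness.)
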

Whenever $\delta_{k}$ is bounded its adjoint $\partial_{k}:=\delta_{k}^{*}:\Omega_{L^{2}}^{k}\to\Omega_{L^{2}}^{k-1}$
is defined by the relation $\left\langle \delta_{k}f,g\right\rangle =\left\langle f,\partial_{k}g\right\rangle $
for every $f\in\Omega_{L^{2}}^{k-1}$ and every $g\in\Omega_{L^{2}}^{k}$.
One can verify that in this case  
\begin{equation}
\partial_{k}g\left(\sigma\right)=\frac{1}{w\left(\sigma\right)}\sum_{v\vartriangleleft\sigma}w\left(v\sigma\right)g\left(v\sigma\right).\label{eq:boundary_operator}
\end{equation}
The last equation can be taken as the definition of $\partial_{k}$
even when the required assumptions on $\delta_{k}$ are not satisfied,
however in this case $\partial_{k}:\Omega_{L^{2}}^{k}\to\Omega^{k-1}$
is not necessarily well defined since $\deg\left(\sigma\right)$ might
be infinite. 
\begin{claim}
\label{claim:k-good_2}If $\deg\left(\sigma\right)<\infty$ for every
$\sigma\in X^{k-1}$ then $\partial_{k}$ is well defined. In addition,
the operator $\partial_{k}$ is bounded whenever $\sup_{\sigma\in X^{k-1}}\frac{1}{w\left(\sigma\right)}\sum_{\tau\in\mathrm{cf}\left(\sigma\right)}w\left(\tau\right)<\infty$. 
\end{claim}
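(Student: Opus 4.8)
The plan is to treat the two assertions separately, since the first requires only the finite-degree hypothesis while the second uses the stronger supremum condition.

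For well-definedness I would observe that, for a fixed $\sigma\in X^{k-1}$, the sum in \eqref{eq:boundary_operator} ranges over the vertices $v\vartriangleleft\sigma$, equivalently over the cofaces $v\sigma\in\mathrm{cf}(\sigma)$, and there are exactly $\deg(\sigma)$ of them. Hence if $\deg(\sigma)<\infty$ the expression defining $\partial_{k}g(\sigma)$ is a finite sum of real numbers and is therefore itself a well-defined real number. I would also record that $\partial_{k}g\in\Omega^{k-1}$, i.e.\ that it is anti-symmetric: reversing the orientation of $\sigma$ reverses the orientation induced on each coface $v\sigma$, so that $g(v\overline{\sigma})=-g(v\sigma)$, while $w$ is orientation-independent; summing then gives $\partial_{k}g(\overline{\sigma})=-\partial_{k}g(\sigma)$.

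For boundedness I would argue by duality. Set $C=\sup_{\sigma\in X^{k-1}}\frac{1}{w(\sigma)}\sum_{\tau\in\mathrm{cf}(\sigma)}w(\tau)$ and assume $C<\infty$. By Claim \ref{claim:k-good_1} this is precisely the condition under which $\delta_{k}$ is bounded on the relevant $L^{2}$ spaces. Since $\partial_{k}$ was defined as the Hilbert-space adjoint $\delta_{k}^{*}$ whenever $\delta_{k}$ is bounded, and the adjoint of a bounded operator is bounded with the same norm, it follows that $\partial_{k}$ is bounded with $\|\partial_{k}\|=\|\delta_{k}\|$. The one point needing care is that the explicit formula \eqref{eq:boundary_operator} genuinely represents $\delta_{k}^{*}$; this is the computation already asserted to be verifiable in the text preceding the claim, which I would simply invoke.

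As a self-contained alternative that also yields an explicit constant, I would estimate directly. Applying Cauchy--Schwarz to $\sum_{v\vartriangleleft\sigma}w(v\sigma)g(v\sigma)=\sum_{v\vartriangleleft\sigma}\sqrt{w(v\sigma)}\cdot\sqrt{w(v\sigma)}\,g(v\sigma)$ and using $\sum_{v\vartriangleleft\sigma}w(v\sigma)=\sum_{\tau\in\mathrm{cf}(\sigma)}w(\tau)\le C\,w(\sigma)$ gives
\[
w(\sigma)\,|\partial_{k}g(\sigma)|^{2}\le C\sum_{v\vartriangleleft\sigma}w(v\sigma)\,|g(v\sigma)|^{2}.
\]
Summing over $\sigma\in X^{k-1}$ and interchanging the order of summation, the double sum becomes a sum over incident (face, coface) pairs; since every $k$-cell $\tau$ has exactly $k+1$ faces of dimension $k-1$, each $\tau$ is counted $k+1$ times, so the right-hand side equals $C(k+1)\|g\|^{2}$ and hence $\|\partial_{k}\|\le\sqrt{C(k+1)}$. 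The main obstacle in this route is justifying the interchange of summation and the resulting $(k+1)$-fold counting of each coface; everything else is routine. This same estimate incidentally shows that the supremum condition alone already forces the defining sum to converge absolutely, so under the stronger hypothesis boundedness in fact subsumes well-definedness.
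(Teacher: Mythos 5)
Your proposal is correct, and in fact your ``self-contained alternative'' \emph{is} the paper's own proof: the appendix handles well-definedness exactly as you do (a finite sum when $\deg\left(\sigma\right)<\infty$), then bounds $\left\Vert \partial_{k}g\right\Vert ^{2}$ by applying Cauchy--Schwarz cell by cell, pulling out the supremum, and interchanging the double sum over incident pairs --- an interchange that is legitimate by Tonelli since all terms are non-negative. One small point in your favor: your counting constant $k+1$ is the right one (a $k$-cell has $k+1$ faces of dimension $k-1$), whereas the paper's displayed final constant is $k$, an apparent off-by-one slip that is harmless for the claim. Your primary route, via $\partial_{k}=\delta_{k}^{*}$ together with Claim \ref{claim:k-good_1} and the fact that adjoints of bounded operators are bounded, is genuinely different from the paper's argument and is also valid; it is shorter and gives the sharper conclusion $\left\Vert \partial_{k}\right\Vert =\left\Vert \delta_{k}\right\Vert $, but it leans on the unproved assertion in the text that the explicit formula \eqref{eq:boundary_operator} really does represent $\delta_{k}^{*}$ --- a verification of essentially the same computational weight as the direct estimate --- so the direct route is the more self-contained of the two. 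Your closing observations (anti-symmetry of $\partial_{k}g$, and that the supremum condition alone forces absolute convergence, so boundedness subsumes well-definedness) are correct additions not spelled out in the paper.
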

The last two claims portend the following definition:
\begin{defn}
\label{def:k-good_weight_function}A weight function $w:X\to\left(0,\infty\right)$
is called $k$-good if 
\begin{equation}
\sup_{\sigma\in X^{k-1}}\frac{1}{w\left(\sigma\right)}\sum_{\tau\in\mathrm{cf}\left(\sigma\right)}w\left(\tau\right)<\infty.\label{eq:good_weight_functions}
\end{equation}
If $w$ is $k$-good for every $0\leq k\leq d$ we simply say that
$w$ is good. \end{defn}
\begin{example}
\label{exa:weight_functions}$ $
\begin{enumerate}
\item If $X$ is a finite $d$-complex, then every weight function $w:X\to\left(0,\infty\right)$
is good.
\item Assume that $X$ is a $d$-complex such that $1\leq\deg\left(\sigma\right)<\infty$
for every $\sigma\in X^{d-1}$ and let $w_{\shortuparrow}:X\to\left(0,\infty\right)$
be the weight function 
\[
w_{\shortuparrow}\left(\sigma\right)=\begin{cases}
\deg\left(\sigma\right) & \,\,\sigma\in X^{d-1}\\
1 & \,\,\sigma\notin X^{d-1}
\end{cases}.
\]
Then for $\sigma\in X^{k-1}$ 
\[
\frac{1}{w_{\shortuparrow}\left(\sigma\right)}\sum_{\tau\in\mathrm{cf}\left(\sigma\right)}w_{\shortuparrow}\left(\tau\right)=\begin{cases}
1 & \,\, k=d\\
\sum_{\tau\in\mathrm{cf}\left(\sigma\right)}\deg\left(\tau\right) & \,\, k=d-1\\
\deg\left(\sigma\right) & \,\, k<d-1
\end{cases}.
\]
Therefore $w_{\shortuparrow}$ is always $d$-good, is $\left(d-1\right)$-good
if and only if $\sup_{\sigma\in X^{d-2}}\sum_{\tau\in\mathrm{cf}\left(\sigma\right)}\deg\left(\tau\right)<\infty$
and is $k$-good for $k<d-1$ if and only if the degrees of the $\left(k-1\right)$-cells
are uniformly bounded. 
\item Assume that $X$ is a $d$-complex such that $\deg\left(\sigma\right)<\infty$
for every $\sigma\in X^{d-1}$ and let $w_{\shortdownarrow}:X\to\left(0,\infty\right)$
be the weight function 
\[
w_{\shortdownarrow}\left(\sigma\right)=\begin{cases}
\frac{1}{d+1} & \,\,\sigma\in X^{d}\\
1 & \,\,\sigma\notin X^{d}
\end{cases}.
\]
Then for $\sigma\in X^{k-1}$ 
\[
\frac{1}{w_{\shortdownarrow}\left(\sigma\right)}\sum_{\tau\in\mathrm{cf}\left(\sigma\right)}w_{\shortdownarrow}\left(\tau\right)=\begin{cases}
\frac{\deg\left(\sigma\right)}{d+1} & \,\, k=d\\
\deg\left(\sigma\right) & \,\, k<d
\end{cases}.
\]
Therefore $w_{\shortdownarrow}$ is $k$-good if and only if the degrees
of the $\left(k-1\right)$-cells are uniformly bounded. 
\end{enumerate}
\end{example}
Whenever the operators $\partial_{\cdot}$ and $\delta_{\cdot}$ are
well defined, the \emph{upper, lower and full Laplacians,} $\Delta_{k}^{+},\Delta_{k}^{-}:\Omega_{L^{2}}^{k}\to\Omega^{k}$
and $\Delta_{k}:\Omega_{L^{2}}^{k}\to\Omega^{k}$ respectively, are
given by 
\begin{align*}
 & \Delta_{k}^{+}=\partial_{k+1}\delta_{k+1} &  & \!\!\!\!\!-1\leq k\leq d-1,\\
 & \Delta_{k}^{-}=\delta_{k}\partial_{k} &  & \,\,0\leq k\leq d,\\
 & \Delta_{k}=\Delta_{k}^{+}+\Delta_{k}^{-} &  & \,\,0\leq k\leq d-1.
\end{align*}
The special case of $\Delta_{d-1}^{+}$ will be abbreviated $\Delta^{+}$.

A short calculation gives  
\begin{equation}
\Delta_{k}^{+}f\left(\sigma\right)=\frac{1}{w\left(\sigma\right)}\left(\sum_{\tau\in\mathrm{cf}\left(\sigma\right)}w\left(\tau\right)\right)f\left(\sigma\right)-\frac{1}{w\left(\sigma\right)}\sum_{\sigma'\sim\sigma}w\left(\sigma'\cup\sigma\right)f\left(\sigma'\right)\label{eq:upp_Laplacian}
\end{equation}
and 
\begin{equation}
\Delta_{k}^{-}f\left(\sigma\right)=\left(\sum_{\tau\in\mathrm{face}\left(\sigma\right)}\frac{w\left(\sigma\right)}{w\left(\tau\right)}\right)f\left(\sigma\right)-\sum_{\sigma'\underset{\shortdownarrow}{\sim}\sigma}\frac{w\left(\sigma'\right)}{w\left(\sigma\cap\sigma'\right)}f\left(\sigma'\right).\label{eq:down_Laplacian}
\end{equation}
 The space of \emph{Harmonic $k$-forms,} denoted $\mathcal{H}^{k}=\mathcal{H}^{k}\left(X\right)$,
is defined to be the kernel of $\Delta_{k}$. 

Throughout the paper (except for Section \ref{sec:Lower---branching_random_walk})
the weight function $w_{\shortuparrow}$ from Example \ref{exa:weight_functions}(2)
is used, in which case one gets 
\begin{equation}
\Delta^{+}f\left(\sigma\right)=f\left(\sigma\right)-\frac{1}{\deg\left(\sigma\right)}\sum_{\sigma'\sim\sigma}f\left(\sigma'\right).\label{eq:Up_Laplacian_that_we_use}
\end{equation}

\subsection{Discrete Hodge theory \label{subsec:Discrete-Hodge-theory}}

The sequence $\left(\Omega^{k},\delta_{k+1}\right)$ is a simplicial
cochain complex of $X$, meaning that $\delta_{k+1}\delta_{k}=0$
for every $k$. The chain structure gives rise to the 
\begin{align*}
 & \,\,\,\mbox{\emph{k-cocycles}}\,\,(closed\, forms) &  & Z^{k}=\ker\delta_{k+1},\\
 & \,\,\,\mbox{\emph{k-coboundaries}}\,\,(exact\, forms) &  & B^{k}=\mathrm{im}\delta_{k},\\
 & \,\,\,\mbox{\emph{k-cohomology}} &  & H^{k}=\nicefrac{Z^{k}}{B^{k}}.
\end{align*}

When $X$ is a \textbf{finite} complex (or more generally when $w$
is a good weight function) the sequence $\left(\Omega^{k},\partial_{k}\right)$
is a simplicial chain complex of $X$ and this gives rise to 
\begin{align*}
 & \mbox{\emph{k-cycles}} & \phantom{\quad\quad\quad\quad\quad\quad\,\,\,} & Z_{k}=\ker\partial_{k},\\
 & \mbox{\emph{k-boundaries}} &  & B_{k}=\mathrm{im}\partial_{k+1},\\
 & \mbox{\emph{k-homology}} &  & H_{k}=\nicefrac{Z_{k}}{B_{k}}.
\end{align*}
The isomorphism between harmonic $k$-forms, the $k$-cohomology and
the $k$-homology as well as the connection to the boundary operators
is known as discrete Hodge theorem. In the discrete setting it originates
in the Work of Eckmann \cite{Eck44} and is summarized in the following
lemma:
\begin{lem}[Discrete Hodge theory \cite{Eck44}]
 Let $X$ be a \textbf{finite} $d$-complex. Then for any $-1\leq k\leq d-1$
and any weight function $w:X\to\left(0,\infty\right)$
\begin{enumerate}
\item $Z_{k}=\ker\partial_{k}=\ker\Delta_{k}^{-}=\left(B^{k}\right)^{\bot}$.
\item $B_{k}=\mathrm{im}\partial_{k+1}=\mathrm{im}\Delta_{k}^{+}=\left(Z^{k}\right)^{\bot}$.
\item $Z^{k}=\ker\delta_{k+1}=\ker\Delta_{k}^{+}=\left(B_{k}\right)^{\bot}$.
\item $B^{k}=\mathrm{im}\delta_{k}=\mathrm{im}\Delta_{k}^{-}=\left(Z_{k}\right)^{\bot}$.
\item $\mathcal{H}^{k}=\ker\Delta_{k}=Z_{k}\cap Z^{k}=\left(B_{k}\oplus B^{k}\right)^{\bot}\cong H_{k}\cong H^{k}.$
\item $\Omega^{k}=\rlap{\ensuremath{\overbrace{\phantom{B_{k}\oplus\mathcal{H}^{k}}}^{Z_{k}}}}B_{k}\oplus\underbrace{\mathcal{H}^{k}\oplus B^{k}}_{Z^{k}}$
(Hodge decomposition).
\end{enumerate}
\end{lem}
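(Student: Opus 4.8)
The plan is to prove the whole lemma as an exercise in finite-dimensional linear algebra, using that $X$ finite makes every $\Omega^{k}$ a finite-dimensional inner product space (with the inner product induced by $w$) and that on such spaces $\partial_{k+1}=\delta_{k+1}^{*}$. Throughout I would lean on two standard facts for a linear map $T$ between finite-dimensional inner product spaces: the orthogonality relations $\ker T=(\mathrm{im}\,T^{*})^{\bot}$ and $\mathrm{im}\,T=(\ker T^{*})^{\bot}$; and the composite identity $\ker(T^{*}T)=\ker T$, which follows from $\langle T^{*}Tf,f\rangle=\|Tf\|^{2}$ (and symmetrically $\ker(TT^{*})=\ker T^{*}$). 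Note that the argument never uses any property of $w$ beyond positivity, which is exactly what is needed for $\langle\cdot,\cdot\rangle$ to be a genuine inner product, so the conclusion holds for every weight function as claimed, and the restriction $-1\le k\le d-1$ only serves to guarantee that $\delta_{k+1}$ is defined.

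First I would settle the kernel identities of parts (1) and (3). Writing $\Delta_{k}^{+}=\partial_{k+1}\delta_{k+1}=\delta_{k+1}^{*}\delta_{k+1}$ and $\Delta_{k}^{-}=\delta_{k}\partial_{k}=\delta_{k}\delta_{k}^{*}$, the identity $\ker(T^{*}T)=\ker T$ yields at once $\ker\Delta_{k}^{+}=\ker\delta_{k+1}=Z^{k}$ and $\ker\Delta_{k}^{-}=\ker\partial_{k}=Z_{k}$. The orthogonality descriptions are then immediate: $Z^{k}=\ker\delta_{k+1}=(\mathrm{im}\,\delta_{k+1}^{*})^{\bot}=(\mathrm{im}\,\partial_{k+1})^{\bot}=(B_{k})^{\bot}$, and similarly $Z_{k}=\ker\partial_{k}=(\mathrm{im}\,\partial_{k}^{*})^{\bot}=(\mathrm{im}\,\delta_{k})^{\bot}=(B^{k})^{\bot}$. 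For the image identities of (2) and (4) I would use that $\Delta_{k}^{+},\Delta_{k}^{-}$ are self-adjoint, so $\mathrm{im}\,\Delta_{k}^{+}=(\ker\Delta_{k}^{+})^{\bot}=(Z^{k})^{\bot}$ and $\mathrm{im}\,\Delta_{k}^{-}=(\ker\Delta_{k}^{-})^{\bot}=(Z_{k})^{\bot}$; comparing with $\mathrm{im}\,\partial_{k+1}=(\ker\delta_{k+1})^{\bot}=(Z^{k})^{\bot}$ and $\mathrm{im}\,\delta_{k}=(\ker\partial_{k})^{\bot}=(Z_{k})^{\bot}$ gives $B_{k}=\mathrm{im}\,\partial_{k+1}=\mathrm{im}\,\Delta_{k}^{+}=(Z^{k})^{\bot}$ and $B^{k}=\mathrm{im}\,\delta_{k}=\mathrm{im}\,\Delta_{k}^{-}=(Z_{k})^{\bot}$. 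This closes (1)--(4), the reciprocal orthogonality statements being mutually consistent because $(W^{\bot})^{\bot}=W$ in finite dimensions.

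Finally I would treat the harmonic part (5) and the decomposition (6). From $\langle\Delta_{k}f,f\rangle=\|\delta_{k+1}f\|^{2}+\|\partial_{k}f\|^{2}$ one reads off $\mathcal{H}^{k}=\ker\Delta_{k}=\ker\Delta_{k}^{+}\cap\ker\Delta_{k}^{-}=Z^{k}\cap Z_{k}$. The cochain/chain relations $\delta_{k+1}\delta_{k}=0$ and $\partial_{k}\partial_{k+1}=0$ give $B^{k}\subseteq Z^{k}$ and $B_{k}\subseteq Z_{k}$; combining $B^{k}\subseteq Z^{k}$ with $B_{k}=(Z^{k})^{\bot}$ shows $B_{k}\bot B^{k}$, so $B_{k}\oplus B^{k}$ is an orthogonal sum and $\mathcal{H}^{k}=(B^{k})^{\bot}\cap(B_{k})^{\bot}=(B_{k}\oplus B^{k})^{\bot}$. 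To get (6) I would start from the two orthogonal splittings $\Omega^{k}=\mathrm{im}\,\partial_{k+1}\oplus\ker\delta_{k+1}=B_{k}\oplus Z^{k}$ and $\Omega^{k}=\mathrm{im}\,\delta_{k}\oplus\ker\partial_{k}=B^{k}\oplus Z_{k}$; intersecting the first with $Z_{k}$ (using $B_{k}\subseteq Z_{k}$) refines it to $Z_{k}=B_{k}\oplus\mathcal{H}^{k}$, and intersecting the second with $Z^{k}$ (using $B^{k}\subseteq Z^{k}$) gives $Z^{k}=B^{k}\oplus\mathcal{H}^{k}$, whence $\Omega^{k}=B_{k}\oplus Z^{k}=B_{k}\oplus\mathcal{H}^{k}\oplus B^{k}$. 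The isomorphisms $H_{k}=Z_{k}/B_{k}\cong\mathcal{H}^{k}$ and $H^{k}=Z^{k}/B^{k}\cong\mathcal{H}^{k}$ then follow by composing the quotient maps with these orthogonal direct-sum projections onto $\mathcal{H}^{k}$. There is no deep obstacle here, as the entire statement is finite-dimensional linear algebra; the only point requiring genuine care is the orthogonality $B_{k}\bot B^{k}$ together with the correct refinement of the two Hodge splittings into $Z_{k}=B_{k}\oplus\mathcal{H}^{k}$ and $Z^{k}=B^{k}\oplus\mathcal{H}^{k}$, from which everything else assembles.
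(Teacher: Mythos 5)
Your proof is correct. Note that the paper does not actually prove this lemma at all --- it is stated as a known result, cited to Eckmann \cite{Eck44} --- so there is no internal proof to compare against; your argument is the standard finite-dimensional one (adjointness $\partial_{k+1}=\delta_{k+1}^{*}$, the identities $\ker(T^{*}T)=\ker T$ and $\ker T=\left(\mathrm{im}\,T^{*}\right)^{\bot}$, and the refinement of the two splittings $\Omega^{k}=B_{k}\oplus Z^{k}=B^{k}\oplus Z_{k}$ via $B_{k}\subseteq Z_{k}$, $B^{k}\subseteq Z^{k}$), which is essentially Eckmann's original argument and is complete as written, including the correct observation that only positivity of $w$ is used.
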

Due to the chain complex structure $B_{k}\subset Z_{k}$, which implies
that the Laplacian $\Delta_{k}^{+}$ always has trivial zeroes in
its kernel. The \emph{spectral gap} of a finite $d$-complex $X$,
denoted $\lambda_{k}\left(X\right)$, is defined to be the smallest
non-trivial eigenvalue of $\Delta_{k}^{+}$ that is 
\[
\lambda_{k}\left(X\right)=\min\left(\mathrm{Spec}\left(\Delta_{k}^{+}\Big|_{\left(B^{k-1}\right)^{\bot}}\right)\right)=\min\left(\mathrm{Spec}\left(\Delta_{k}^{+}\Big|_{Z_{k-1}}\right)\right).
\]

Going back to the case of a general $d$-complex $X$, the boundary
operators $\partial_{k}$ are not well defined and even when they
are it is possible to have $\partial_{k}\partial_{k+1}\left(f\right)\neq0$
for some $f\in\Omega_{L^{2}}^{k+1}$. If $w:X\to\left(0,\infty\right)$
is both $k$ and $\left(k+1\right)$-good then both operators are
well defined and bounded and due to the fact that $\partial_{k}^{*}=\delta_{k}$
and $\partial_{k+1}=\delta_{k+1}^{*}$ it follows that $\partial_{k}\partial_{k+1}=0$.
The interested reader might want to consult \cite{PR12} for additional
discussion on the general case.

\subsection{The $\left(d-1\right)$-walk \label{subsec:The--walk}}

In this subsection we recall the definition of the $\left(d-1\right)$-walk
constructed in \cite{PR12} as well as some of its properties.
\begin{defn}
\cite[Definition 2.1]{PR12} The $p$-lazy $\left(d-1\right)$-walk
on $X$ is a time-homogeneous Markov chain with state space $X_{\pm}^{d-1}$
that stays put with probability $p$, and with probability $\left(1-p\right)$
chooses one of its neighbors (see Definition \ref{def:neighboring_relation})
in $X_{\pm}^{d-1}$ uniformly at random and jumps to it. More formally,
this is a Markov chain $\left(Y_{n}\right)_{n\geq0}$ with state space
$X_{\pm}^{d-1}$ and transition probabilities 
\[
\mathrm{Prob}\left(Y_{n+1}=\sigma'\Big|Y_{n}=\sigma\right)=\begin{cases}
p & \quad\sigma'=\sigma\\
\frac{1-p}{d\cdot\deg\left(\sigma\right)} & \quad\sigma'\sim\sigma\\
0 & \quad\mbox{otherwise}
\end{cases}.
\]

\end{defn}
The heat kernel of the random walk $\left(\mathbf{p}_{n}\left(\sigma,\sigma'\right)\right)_{n\geq0,\,\sigma,\sigma'\in X_{\pm}^{d-1}}$
is defined by 
\[
\mathbf{p}_{n}\left(\sigma,\sigma'\right)=\mathrm{Prob}\left(Y_{n}=\sigma'\Big|Y_{0}=\sigma\right).
\]

The behavior of the $\left(d-1\right)$-random walk, or more precisely
of its heat kernel, relates to the $\left(d-1\right)$-connectedness
of the complex in the same way that a classic random walk on a graph
relates to the connectedness of the graph. In order to relate the
$\left(d-1\right)$-walk to the homology and cohomology of the complex,
which are more natural counterparts of connectedness in high dimensions,
the authors introduced the \emph{expectation process} $\mathcal{E}_{n}:X_{\pm}^{d-1}\times X_{\pm}^{d-1}\to\left[-1,1\right]$
which for $d\geq2$ is defined by%
\footnote{In the case $d=1$ the expectation process is simply defined to be
heat kernel.%
} 

\[
\mathcal{E}_{n}\left(\sigma,\sigma'\right)=\mathbf{p}_{n}\left(\sigma,\sigma'\right)-\mathbf{p}_{n}\left(\sigma,\overline{\sigma'}\right).
\]
Unfortunately a new problem arises when observing the expectation
process, that is $\lim_{n\to\infty}\mathcal{E}_{n}\left(\sigma,\sigma'\right)=0$,
for every $\sigma,\sigma'\in X_{\pm}^{d-1}$. However, it was proven
in \cite{PR12} that $\left|\mathcal{E}_{n}\left(\sigma,\sigma'\right)\right|=\Theta\left(\left(\frac{p\left(d-1\right)+1}{d}\right)^{n}\right)$
for every finite $d$-complex $X$, which called upon the definition
of a \emph{normalized expectation process}

\[
\widetilde{\mathcal{E}}_{n}\left(\sigma,\sigma'\right)=\left(\frac{d}{p\left(d-1\right)+1}\right)^{n}\mathcal{E}_{n}\left(\sigma,\sigma'\right).
\]

The evolution of the expectation process and its normalized version
in time is given by $\mathcal{E}_{n+1}\left(\sigma,\cdot\right)=\left(A_{p}\mathcal{E}_{n}\right)\left(\sigma,\cdot\right)$
and $\widetilde{\mathcal{E}}_{n+1}\left(\sigma,\cdot\right)=\left(\left(\frac{d}{p\left(d-1\right)+1}\right)A_{p}\widetilde{\mathcal{E}}_{n}\right)\left(\sigma,\cdot\right)$
respectively, where 
\begin{equation}
\left(A_{p}f\right)\left(\sigma\right)=pf\left(\sigma\right)+\frac{\left(1-p\right)}{d}\sum_{\sigma'\sim\sigma}\frac{f\left(\sigma'\right)}{\deg\left(\sigma'\right)},\label{eq:prop_operator_(d-1)-walk}
\end{equation}
and $A_{p}$ acts on the second coordinate.

The following theorem summarizes the connection between the asymptotics
of the normalized expectation process and the homology of the a complex:
\begin{thm}[{\cite[Theorem 2.9 and (2.1)]{PR12}}]
\label{thm:(d-1)-walk_vs_homology}Let $X$ be a \textbf{finite $d$}-complex
and $\widetilde{\mathcal{E}}_{n}$ the normalized expectation process
associated with the $p$-lazy $\left(d-1\right)$-walk on $X$. 
\begin{enumerate}
\item If $\frac{d-1}{3d-1}<p<1$, then $\widetilde{\mathcal{E}}_{\infty}=\lim_{n\to\infty}\widetilde{\mathcal{E}}_{n}$
always exists. In addition if $p=\frac{d-1}{3d-1}$ then $\widetilde{\mathcal{E}}_{\infty}=\lim_{n\to\infty}\widetilde{\mathcal{E}}_{n}$
exists whenever $X$ has no disorientable $\left(d-1\right)$-components. 
\item If $p>\frac{d-1}{3d-1}$ or $p=\frac{d-1}{3d-1}$ and $X$ has no
disorientable $\left(d-1\right)$-components, then $\left\{ \widetilde{\mathcal{E}}_{\infty}\left(\sigma,\cdot\right)\right\} _{\sigma\in X_{\pm}^{d-1}}\subset B^{d-1}$
if and only if $H_{d-1}\left(X\right)=0$. 
\item More generally, If $p>\frac{d-1}{3d-1}$ or $p=\frac{d-1}{3d-1}$
and $X$ has no disorientable $\left(d-1\right)$-components, then
the dimension of $H_{d-1}\left(X\right)$ equals the dimension of
$\mathrm{Span}\left\{ \mathrm{Proj}_{Z_{d-1}}\left(\widetilde{\mathcal{E}}_{\infty}\left(\sigma,\cdot\right)\right)\,:\,\sigma\in X_{\pm}^{d-1}\right\} $,
where $\mathrm{Proj}_{Z_{d-1}}$ is the orthogonal projection in $\Omega^{d-1}$
onto $Z_{d-1}$.
\item If furthermore $p\geq\frac{1}{2}$ then 
\[
\mathrm{dist}\left(\widetilde{\mathcal{E}}_{n},B^{d-1}\right)=O\left(\left(1-\frac{1-p}{p\left(d-1\right)+1}\lambda_{d-1}\left(X\right)\right)^{n}\right).
\]

\end{enumerate}
\end{thm}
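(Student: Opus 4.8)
The plan is to reduce the entire statement to the spectral theory of the upper Laplacian $\Delta^{+}=\Delta_{d-1}^{+}$. Since $\mathcal{E}_{0}(\sigma,\cdot)=\ind_{\sigma}$ and the process propagates by $\mathcal{E}_{n+1}(\sigma,\cdot)=A_{p}\mathcal{E}_{n}(\sigma,\cdot)$, one has $\widetilde{\mathcal{E}}_{n}(\sigma,\cdot)=\widehat{A}_{p}^{n}\ind_{\sigma}$ with $\widehat{A}_{p}=\frac{d}{p(d-1)+1}A_{p}$, so everything is governed by a single linear operator. Comparing \eqref{eq:prop_operator_(d-1)-walk} with \eqref{eq:Up_Laplacian_that_we_use}, the operator $A_{p}$ is, after conjugation by the multiplication operator $M\colon f\mapsto\deg(\cdot)f$ (the reversing measure of the walk), an affine function of $\Delta^{+}$: concretely $M^{-1}A_{p}M=pI+\frac{1-p}{d}(I-\Delta^{+})$, whence $M^{-1}\widehat{A}_{p}M=I-\frac{1-p}{p(d-1)+1}\Delta^{+}$. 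Because $\Delta^{+}=\partial_{d}\delta_{d}$ is self-adjoint and positive semidefinite on $\Omega_{L^{2}}^{d-1}$, this conjugated operator is diagonalizable with real spectrum, and the whole problem collapses to the asymptotics of the scalar geometric sequences $\bigl(1-\tfrac{1-p}{p(d-1)+1}\lambda\bigr)^{n}$ indexed by $\lambda\in\mathrm{Spec}(\Delta^{+})$.

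For part (1) I would use the a priori bound $\mathrm{Spec}(\Delta^{+})\subseteq[0,d+1]$: indeed $I-\Delta^{+}$ is (self-adjointly, in the $w_{\shortuparrow}$ inner product) an average over the $d\deg(\sigma)$ oriented neighbours, so $\langle(I-\Delta^{+})f,f\rangle\le d\langle f,f\rangle$ by \eqref{eq:upp_Laplacian}, giving norm at most $d$ and hence the stated range together with $\Delta^{+}\succeq 0$. Each eigenvalue contributes the ratio $r(\lambda)=1-\frac{1-p}{p(d-1)+1}\lambda$, which lies in $(-1,1]$ exactly when $\lambda<\frac{2(p(d-1)+1)}{1-p}$, with $r=1$ only for $\lambda=0$. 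A direct computation shows $r(d+1)=-1$ precisely at $p=\frac{d-1}{3d-1}$ and $r(d+1)>-1$ for every larger $p$. Thus $\widehat{A}_{p}^{n}$ converges for all $p>\frac{d-1}{3d-1}$, while at the threshold convergence holds iff $d+1\notin\mathrm{Spec}(\Delta^{+})$; invoking the equivalence from \cite{PR12} between $d+1$ being an eigenvalue and the presence of a disorientable $(d-1)$-component settles the boundary case. The limit is the spectral projection onto $\ker\Delta^{+}=Z^{d-1}$, transported back by $M$.

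For parts (2) and (3) I would feed the limit into the discrete Hodge decomposition $Z^{d-1}=\mathcal{H}^{d-1}\oplus B^{d-1}$ with $\mathcal{H}^{d-1}\cong H_{d-1}$. Since the Dirac forms $\{\ind_{\sigma}\}$ span $\Omega^{d-1}$ and the limiting operator surjects onto (the $M$-image of) $Z^{d-1}$, the family $\{\widetilde{\mathcal{E}}_{\infty}(\sigma,\cdot)\}_{\sigma}$ spans a copy of $Z^{d-1}$; applying $\mathrm{Proj}_{Z_{d-1}}$ annihilates the $B^{d-1}$ part and leaves exactly $\mathcal{H}^{d-1}$, so $\dim\mathrm{Span}\{\mathrm{Proj}_{Z_{d-1}}\widetilde{\mathcal{E}}_{\infty}(\sigma,\cdot)\}=\dim\mathcal{H}^{d-1}=\dim H_{d-1}$, which is part (3), and the whole family lies in $B^{d-1}$ iff $\mathcal{H}^{d-1}=0$ iff $H_{d-1}=0$, which is part (2). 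For part (4), under $p\ge\frac12$ one checks $\tfrac{1-p}{p(d-1)+1}\le\tfrac{1}{d+1}$, so every non-unit ratio $r(\lambda)$ with $\lambda\ge\lambda_{d-1}(X)$ is nonnegative; the slowest-decaying mode is then $r(\lambda_{d-1}(X))=1-\frac{1-p}{p(d-1)+1}\lambda_{d-1}(X)$. Projecting away the stationary subspace $Z^{d-1}\supseteq B^{d-1}$, the remaining transient lies in $B_{d-1}=\mathrm{im}\,\Delta^{+}$ and decays at this geometric rate, yielding the claimed bound on $\mathrm{dist}(\widetilde{\mathcal{E}}_{n},B^{d-1})$.

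The step I expect to be the main obstacle is the orientation-and-measure bookkeeping hidden in the conjugation by $M$. The walk is self-adjoint only in the degree-weighted (reversing-measure) inner product, whereas the Hodge-theoretic objects $Z^{d-1}$, $B^{d-1}$, $\mathcal{H}^{d-1}$ and the projection $\mathrm{Proj}_{Z_{d-1}}$ are defined through the $w_{\shortuparrow}$ inner product; one must verify carefully that transporting the spectral projection back through $M$ preserves the relevant spans and orthogonality relations, so that the homological conclusions are genuinely unaffected by the reweighting. Separately, making the threshold case $p=\frac{d-1}{3d-1}$ sharp requires establishing the equivalence between disorientability of a $(d-1)$-component and $d+1\in\mathrm{Spec}(\Delta^{+})$, which is the one input I would import from \cite{PR12} rather than reprove from scratch.
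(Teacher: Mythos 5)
A preliminary remark on the comparison: the paper never proves this theorem. It is imported verbatim from \cite{PR12} (the citation \emph{is} the proof) and then used as a black box, together with Lemma \ref{lem:comparison_of_models}, to deduce Theorem \ref{thm:Finite_complexes_BRW_vs_homology}. So the only thing to measure your proposal against is the argument of \cite{PR12}, and your spectral route is indeed that argument in outline. Your algebraic core is correct: $\widetilde{\mathcal{E}}_{n}\left(\sigma,\cdot\right)=\widehat{A}_{p}^{n}\ind_{\sigma}$, the conjugation $M^{-1}\widehat{A}_{p}M=I-c\Delta^{+}$ with $c=\frac{1-p}{p\left(d-1\right)+1}$, the window $\mathrm{Spec}\left(\Delta^{+}\right)\subseteq\left[0,d+1\right]$, the threshold $c\left(d+1\right)=2$ occurring exactly at $p=\frac{d-1}{3d-1}$, and the identification of the boundary case with $d+1\in\mathrm{Spec}\left(\Delta^{+}\right)$, i.e.\ with disorientable $\left(d-1\right)$-components. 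Part (1) is therefore sound, since convergence of $M\left(I-c\Delta^{+}\right)^{n}M^{-1}$ is equivalent to convergence of $\left(I-c\Delta^{+}\right)^{n}$.

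The obstacle you flagged in parts (2)--(4) is, however, a genuine gap, not bookkeeping. Your construction gives $\widetilde{\mathcal{E}}_{\infty}\left(\sigma,\cdot\right)=M\Pi M^{-1}\ind_{\sigma}$, where $\Pi$ is the weighted orthogonal projection onto $Z^{d-1}=\ker\Delta^{+}$, so the family spans $MZ^{d-1}$, not $Z^{d-1}$; and multiplication by $\deg\left(\cdot\right)$ preserves neither $Z^{d-1}$ nor $B^{d-1}$ when degrees are non-constant. Hence the step ``$\mathrm{Proj}_{Z_{d-1}}$ annihilates the $B^{d-1}$ part and leaves exactly $\mathcal{H}^{d-1}$'' does not apply to the family you actually have. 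Concretely, take $d=2$ and $X$ the two triangles $\left\{ 1,2,3\right\} ,\left\{ 1,2,4\right\} $ glued along $\left\{ 1,2\right\} $, so $\deg\left(\left\{ 1,2\right\} \right)=2$, all other edge degrees are $1$, and $H_{1}\left(X\right)=0$. Since $H^{1}=0$ we have $Z^{1}=B^{1}$, so $\Pi\ind_{\left[1,3\right]}=\delta_{1}g$ for some $g\in\Omega^{0}$; but $\delta_{2}\left(M\delta_{1}g\right)\left(\left[1,2,3\right]\right)=g\left(2\right)-g\left(1\right)$, and solving the projection equations gives $g\left(2\right)-g\left(1\right)=\frac{1}{6}\neq0$. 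Thus $\widetilde{\mathcal{E}}_{\infty}\left(\left[1,3\right],\cdot\right)=M\Pi\ind_{\left[1,3\right]}$ is not even a cocycle, although $H_{1}=0$: under the literal second-variable reading, the conclusions of (2)--(4) cannot be rescued by any amount of care in the conjugation, so ``verifying the bookkeeping'' is impossible rather than tedious. The correct resolution --- which is what the stray factor of $M$ is telling you, and the convention under which the quoted result of \cite{PR12} (and Theorem \ref{thm:Finite_complexes_BRW_vs_homology} here) must be read --- is to run the Hodge step on the first-variable kernels $\widetilde{\mathcal{E}}_{\infty}\left(\cdot,\sigma\right)=\Pi\ind_{\sigma}$, equivalently on $\widetilde{\mathcal{E}}_{\infty}\left(\sigma,\cdot\right)/\deg\left(\cdot\right)$; by reversibility, $\deg\left(\sigma\right)\widetilde{\mathcal{E}}_{n}\left(\sigma,\sigma'\right)=\deg\left(\sigma'\right)\widetilde{\mathcal{E}}_{n}\left(\sigma',\sigma\right)$, so the two families differ exactly by $M$ and a scalar. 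Those kernels genuinely lie in and span $Z^{d-1}=\mathcal{H}^{d-1}\oplus B^{d-1}$, after which your arguments for (2), (3) and (4) close as written; on degree-regular complexes, where $M$ is a scalar, your proof is already complete.
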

\begin{rem}
When necessary the notation $\mathcal{E}_{n}^{p}$ and $\widetilde{\mathcal{E}}_{n}^{p}$
is used to stress the dependence of $\mathcal{E}_{n}$ and $\widetilde{\mathcal{E}}_{n}$
on $p$.
\end{rem}

\section{Simplicial branching random walks \label{sec:The--branching-random-walk}}

This section is devoted to the definition of simplicial branching
randoms walk and its effective version as well as to the study of
their basic properties. In the first part, the definition of the processes
is given and the first result (Theorem \ref{thm:Finite_complexes_BRW_vs_homology})
is proved. In the second part, the associated tree structure is described
and a high-dimensional version of (\ref{eq:return_prob_RW}) is proved
(see Theorem \ref{thm:R_via_S_and_overline=00007BS=00007D}). A discussion
on several possible variations of the model can be found in Remark
\ref{rem:variants_on_the_process}. Throughout this section $X$ denotes
a $d$-complex such that $1\leq\deg\left(\sigma\right)<\infty$ for
every $\sigma\in X^{d-1}$.

\medskip{}

The $p$-lazy simplicial branching random walk on $X$ is a time-homogeneous
Markov chain $\left(N_{n}\left(\cdot\right)\right)_{n\geq0}$ with
state space $\mathbb{N}^{X_{\pm}^{d-1}}$ which describes the number
of particles at time $n$ on any of the oriented $\left(d-1\right)$-cells,
that is:
\begin{itemize}
\item $N_{n}$ is a random function from $X_{\pm}^{d-1}$ to $\mathbb{N}$.
\item The process is Markovian, i.e., $\mathrm{Prob}\left(N_{n}\in A\Big|N_{1},\ldots,N_{n-1}\right)=\mathrm{Prob}\left(N_{n}\in A\Big|N_{n-1}\right)$
and time homogeneous, namely $\mathrm{Prob}\left(N_{n}=g\Big|N_{n-1}=f\right)$
doesn't depend on $n$.
\item $N_{n}\left(\sigma\right)$ is the random number of particles in $\sigma$
at time $n$ for every $\sigma\in X_{\pm}^{d-1}$ and $n\geq0$. 
\end{itemize}
One step evolution of the process (its transition kernel) is defined
as follows: Given a configuration of particles on $X_{\pm}^{d-1}$
all the particles evolve simultaneously and independently. If a particle
is positioned in $\sigma$, then it stays put with probability $p$,
and with probability $1-p$ chooses one of the cofaces of $\sigma$
uniformly at random and splits into $d$ new particles which are now
positioned on the neighbors of $\sigma$ in the chosen coface (one
on each such neighbor). Note that one step of the process is comprised
of the evolution of all existing particles. An illustration of one
step of the process on a triangle complex can be found in Figure \ref{fig:One_step_of_the_branching_process}.

One way to realize the process is as follows: Let $\left(\eta_{\sigma}\right)_{\sigma\in X_{\pm}^{d-1}}$
be random variables taking values in $X^{d}\uplus\left\{ \zeta\right\} $,
with each $\eta_{\sigma}$ distributed like 
\[
Prob\left(\eta_{\sigma}=\tau\right)=\begin{cases}
p & \,\,,\tau=\zeta\\
\frac{1-p}{\deg\left(\sigma\right)} & \,\,,\tau\in\mathrm{cf}\left(\sigma\right)
\end{cases}.
\]
Then $\left(N_{n}\right)_{n\geq0}$ is a Markovian process, taking
values in $\mathbb{N}^{X_{\pm}^{d-1}}$ defined by 
\[
N_{n+1}\left(\sigma\right)=\sum_{i=1}^{N_{n}\left(\sigma\right)}\ind_{\xi_{\sigma}^{i,n}=\zeta}+\sum_{\sigma'\sim\sigma}\sum_{i=1}^{N_{n}\left(\sigma'\right)}\ind_{\xi_{\sigma}^{i,n}=\sigma\cup\sigma'},
\]
where $\left(\eta_{\sigma}^{i,n}\right)_{i\geq1,n\geq0}$ are i.i.d.
copies of $\eta_{\sigma}$. 

For a given $\pi\in\mathbb{N}^{X_{\pm}^{d-1}}$ we denote by $P^{\pi}$
the distribution of $\left(N_{n}\right)_{n\geq0}$ with the above
law and starting distribution $P^{\pi}\left(N_{0}=\pi\right)=1$.
The expectation with respect to $P^{\pi}$ is denoted by $E^{\pi}$.
In the case $\pi=\delta_{\sigma}$, where $\delta_{\sigma}\left(\sigma'\right)=\begin{cases}
1 & \sigma'=\sigma\\
0 & \mbox{otherwise}
\end{cases}$, we abbreviate $P^{\sigma}$ and $E^{\sigma}$ instead of $P^{\delta_{\sigma}}$
and $E^{\delta_{\sigma}}$. 

The process which is truly the source of our interest is not $\left(N_{n}\right)_{n\geq0}$
but rather its effective version defined by 
\begin{equation}
D_{n}\left(\sigma\right)=N_{n}\left(\sigma\right)-N_{n}\left(\overline{\sigma}\right),\quad\forall\sigma\in X_{\pm}^{d-1}.\label{eq:EBRW}
\end{equation}
 Note that $\left(D_{n}\right)_{n\geq0}$ is a sequence of random
forms in $\Omega^{d-1}$. 

Finally, the heat kernel of $\left(D_{n}\right)_{n\geq0}$ is defined
by 
\[
\mathscr{E}_{n}\left(\sigma,\sigma'\right)=E^{\sigma}\left[D_{n}\left(\sigma'\right)\right],\quad\forall\sigma,\sigma'\in X_{\pm}^{d-1}.
\]
When necessary the notation $\mathscr{E}_{n}^{p}\left(\sigma,\sigma'\right)$
will be used to stress the dependence on $p$.

We are now ready to give a formal statement of our first result:
\begin{thm}
\label{thm:Finite_complexes_BRW_vs_homology} Let $X$ be a \textbf{finite
$d$}-complex, $\left(D_{n}\right)_{n\geq0}$ the $p$-lazy ESBRW
on $X$ and $\left(\mathscr{E}_{n}\right)_{n\geq0}$ its heat kernel. 
\begin{enumerate}
\item If $\frac{d-1}{d+1}<p<1$, then $\mathscr{E}_{\infty}=\lim_{n\to\infty}\mathscr{E}_{n}$
always exists. In addition if $p=\frac{d-1}{d+1}$ then $\mathscr{E}_{\infty}=\lim_{n\to\infty}\mathscr{E}_{n}$
exists whenever $X$ has no disorientable $\left(d-1\right)$-components. 
\item If $p>\frac{d-1}{d+1}$ or $p=\frac{d-1}{d+1}$ and $X$ has no disorientable
$\left(d-1\right)$-components, then $\left\{ \mathscr{E}_{\infty}\left(\sigma,\cdot\right)\right\} _{\sigma\in X_{\pm}^{d-1}}\subset B^{d-1}$
if and only if $H_{d-1}\left(X\right)=0$. 
\item More generally, if $p>\frac{d-1}{d+1}$ or $p=\frac{d-1}{d+1}$ and
$X$ has no disorientable $\left(d-1\right)$-components, then the
dimension of $H_{d-1}\left(X\right)$ equals the dimension of $\mathrm{Span}\left\{ \mathrm{Proj}_{Z_{d-1}}\left(\mathscr{E}_{\infty}\left(\sigma,\cdot\right)\right)\,:\,\sigma\in X_{\pm}^{d-1}\right\} $,
where $\mathrm{Proj}_{Z_{d-1}}$ is the orthogonal projection in $\Omega^{d-1}$
onto $Z_{d-1}$.
\item If furthermore $p\geq\frac{d}{d+1}$ then 
\[
\mathrm{dist}\left(\mathscr{E}_{n},B^{d-1}\right)=O\left(\left(1-\left(1-p\right)\lambda_{d-1}\left(X\right)\right)^{n}\right).
\]

\end{enumerate}
\end{thm}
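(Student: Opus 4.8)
The plan is to reduce the entire theorem to the spectral analysis of a single self-adjoint operator on $\Omega^{d-1}$. Because the particles evolve independently and $\mathscr E_n(\sigma,\cdot)$ is an expectation, linearity of $E^\sigma$ converts the branching rule into a linear recursion on forms. Concretely, the mean number of offspring that one particle at $\rho$ places at $\eta$ after a single step is $p\,\ind[\eta=\rho]+\frac{1-p}{\deg\rho}\,\ind[\eta\sim\rho]$; passing to the effective (signed) counts and pairing each oriented cell with its reversal — which is exactly the step where the orientation-antisymmetry of $D_n$ makes the signed offspring assemble into a genuine form — collapses this to $\mathscr E_{n+1}(\sigma,\cdot)=\mathscr A_p\,\mathscr E_n(\sigma,\cdot)$, and therefore $\mathscr E_n(\sigma,\cdot)=\mathscr A_p^{\,n}\ind_\sigma$ with $\mathscr E_0(\sigma,\cdot)=\ind_\sigma$. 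The crucial computation is the identification $\mathscr A_p=I-(1-p)\Delta^+$: the factor $d$ produced by the splitting cancels the $1/d$ that appears in the $(d-1)$-walk operator $A_p$, so that the top of the spectrum of $\mathscr A_p$ is $1$ and — unlike for the $(d-1)$-walk — no normalization is needed.

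Granting this identity, I would read everything off the spectrum of $\Delta^+=\Delta_{d-1}^+$. On a finite complex $\Delta^+$ is self-adjoint and positive semidefinite with $\mathrm{Spec}(\Delta^+)\subseteq[0,d+1]$; its kernel is $Z^{d-1}$ (Hodge Lemma (3)); and the extreme value $d+1$ is an eigenvalue if and only if $X$ has a disorientable $(d-1)$-component. Since $\mathscr A_p$ inherits self-adjointness, $\mathrm{Spec}(\mathscr A_p)=1-(1-p)\mathrm{Spec}(\Delta^+)\subseteq[\,1-(1-p)(d+1),\,1\,]$, and the power $\mathscr A_p^{\,n}$ converges precisely when this spectrum is contained in $(-1,1]$. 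The lower endpoint is $>-1$ iff $(1-p)(d+1)<2$, i.e. $p>\frac{d-1}{d+1}$; at the boundary $p=\frac{d-1}{d+1}$ it equals $-1$, and $-1$ is an actual eigenvalue exactly when $d+1\in\mathrm{Spec}(\Delta^+)$, i.e. when a disorientable $(d-1)$-component is present. This gives part (1), with $\mathscr E_\infty(\sigma,\cdot)=P\,\ind_\sigma$, where $P$ is the orthogonal projection onto $\ker(\mathscr A_p-I)=\ker\Delta^+=Z^{d-1}$.

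Parts (2) and (3) then follow from Hodge theory. Since $\mathscr E_\infty(\sigma,\cdot)\in Z^{d-1}=\mathcal H^{d-1}\oplus B^{d-1}$, and since $\{\ind_\sigma\}_\sigma$ spans $\Omega^{d-1}$ while $P$ is surjective onto $Z^{d-1}$, the family $\{\mathscr E_\infty(\sigma,\cdot)\}_\sigma$ spans all of $Z^{d-1}$. The projection $\mathrm{Proj}_{Z_{d-1}}$ kills $B^{d-1}$ (as $B^{d-1}=(Z_{d-1})^\perp$ by the Hodge Lemma) and fixes $\mathcal H^{d-1}$, so $\mathrm{Span}\{\mathrm{Proj}_{Z_{d-1}}\mathscr E_\infty(\sigma,\cdot)\}=\mathcal H^{d-1}$, whose dimension is $\dim H_{d-1}$; this is part (3), and part (2) is its specialization to $\mathcal H^{d-1}=0$, i.e. $H_{d-1}=0$.

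For the rate in part (4) I would restrict $\mathscr A_p$ to $(\ker(\mathscr A_p-I))^\perp=(Z^{d-1})^\perp=B_{d-1}$, where $\Delta^+$ has spectrum in $[\lambda_{d-1}(X),\,d+1]$. The extra hypothesis $p\geq\frac{d}{d+1}$ makes the lower endpoint $1-(1-p)(d+1)\geq0$, so the spectral radius of $\mathscr A_p\big|_{B_{d-1}}$ is $1-(1-p)\lambda_{d-1}(X)$ (no negative eigenvalue can dominate it); writing $\ind_\sigma=P\ind_\sigma+(I-P)\ind_\sigma$ with $(I-P)\ind_\sigma\in B_{d-1}$ gives $\mathscr E_n-\mathscr E_\infty=\mathscr A_p^{\,n}(I-P)\ind_\sigma$, and hence $\mathrm{dist}(\mathscr E_n,B^{d-1})=O\big((1-(1-p)\lambda_{d-1}(X))^n\big)$. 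I expect the main obstacle to be the first paragraph: carrying out the offspring-and-orientation bookkeeping so that the signed counts really do produce the clean operator $I-(1-p)\Delta^+$ (in particular checking that the branching factor cancels exactly), together with the supporting spectral facts that $\|\Delta^+\|\leq d+1$ with equality detecting disorientability, which governs the delicate boundary case $p=\frac{d-1}{d+1}$.
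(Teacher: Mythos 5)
Your proposal takes a genuinely different route from the paper. The paper's proof is essentially one line: Lemma \ref{lem:comparison_of_models}(2) identifies the ESBRW heat kernel with the normalized expectation process of the $\left(d-1\right)$-walk, $\mathscr{E}_{n}^{p}\left(\sigma,\sigma'\right)=\widetilde{\mathcal{E}}_{n}^{p'}\left(\sigma,\sigma'\right)$ for $p'=\frac{p}{1+\left(1-p\right)\left(d-1\right)}$, and then all four statements are read off from Theorem \ref{thm:(d-1)-walk_vs_homology} (quoted from \cite{PR12}), using that $p>\frac{d-1}{d+1}$ corresponds exactly to $p'>\frac{d-1}{3d-1}$, that $p\geq\frac{d}{d+1}$ corresponds to $p'\geq\frac{1}{2}$, and that $\frac{1-p'}{p'\left(d-1\right)+1}=1-p$. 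You instead redo the spectral and Hodge analysis from scratch, which is a legitimate self-contained alternative (it essentially unpacks the proof of the quoted theorem); note, though, that you still import from \cite{PR12} the nontrivial fact that $\mathrm{Spec}\left(\Delta^{+}\right)\subset\left[0,d+1\right]$ with the value $d+1$ attained exactly on disorientable $\left(d-1\right)$-components, so your argument is not fully independent of that paper either.

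There is, however, a genuine error at the step you yourself single out as the crux. Conditioning on the first step of the initial particle gives a recursion in the \emph{first} coordinate --- this is exactly Lemma \ref{lem:comparison_of_models}(1): $\mathscr{E}_{n}\left(\sigma,\sigma'\right)=p\mathscr{E}_{n-1}\left(\sigma,\sigma'\right)+\frac{1-p}{\deg\left(\sigma\right)}\sum_{\sigma''\sim\sigma}\mathscr{E}_{n-1}\left(\sigma'',\sigma'\right)$, with the degree of the \emph{starting} cell. Hence $\mathscr{E}_{n}\left(\cdot,\sigma'\right)=\mathscr{A}_{p}^{n}\ind_{\sigma'}$, whereas you claim $\mathscr{E}_{n}\left(\sigma,\cdot\right)=\mathscr{A}_{p}^{n}\ind_{\sigma}$. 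These coincide only when the matrix of $\mathscr{A}_{p}$ is symmetric, i.e.\ when neighboring $\left(d-1\right)$-cells have equal degrees: in general $\mathscr{A}_{p}$ is self-adjoint only with respect to the $\deg$-weighted inner product, so its transpose is $W\mathscr{A}_{p}W^{-1}$, where $W$ denotes multiplication by $\deg$, and therefore $\mathscr{E}_{n}\left(\sigma,\cdot\right)=W\mathscr{A}_{p}^{n}W^{-1}\ind_{\sigma}$. This propagates: the limit of the second-variable forms is $WPW^{-1}\ind_{\sigma}\in W\left(Z^{d-1}\right)$ rather than $P\ind_{\sigma}\in Z^{d-1}$, and $W\left(Z^{d-1}\right)\neq Z^{d-1}$ for non-regular complexes, so the Hodge-decomposition arguments in your parts (2)--(3), which require membership in $Z^{d-1}=\mathcal{H}^{d-1}\oplus B^{d-1}$, do not apply as written. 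The case $d=1$ makes the failure concrete: on a connected non-regular graph the rows $\mathbf{p}_{n}\left(\sigma,\cdot\right)$ of the lazy heat kernel converge to the stationary distribution, which is proportional to the degree and is \emph{not} in $B^{0}$ (the constants); it is the columns $\mathbf{p}_{n}\left(\cdot,\sigma'\right)$ that converge to constant functions. To repair your argument you must run the entire spectral and Hodge analysis on the first-variable forms $\mathscr{E}_{n}\left(\cdot,\sigma'\right)=\mathscr{A}_{p}^{n}\ind_{\sigma'}$ (the convention of Lemma \ref{lem:comparison_of_models}(1)) and then transfer back using the symmetry $\deg\left(\sigma\right)\mathscr{E}_{n}\left(\sigma,\sigma'\right)=\deg\left(\sigma'\right)\mathscr{E}_{n}\left(\sigma',\sigma\right)$, or equivalently carry the conjugation by $W$ through the Hodge decomposition; neither step appears in your write-up. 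For regular complexes, where $W$ is a scalar, your argument is correct as it stands.
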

The following lemma contains the main ingredient for the proof of
Theorem \ref{thm:Finite_complexes_BRW_vs_homology} besides Theorem
\ref{thm:(d-1)-walk_vs_homology}:
\begin{lem}[Time evolution of the heat kernel and its connection to the expectation
process]
\label{lem:comparison_of_models}$ $ 
\begin{enumerate}
\item For every $0\leq p\leq1$ and $\sigma'\in X_{\pm}^{d-1}$ the evolution
of $\mathscr{E}_{n}^{p}\left(\cdot,\sigma'\right)$ in time is given
by $\mathscr{E}_{n}^{p}\left(\cdot,\sigma'\right)=\left(\mathscr{A}_{p}\mathscr{E}_{n-1}^{p}\right)\left(\cdot,\sigma'\right)$,
where $\mathscr{A}_{p}:\Omega^{d-1}\to\Omega^{d-1}$ acts on the first
coordinate and is given by 
\[
\mathscr{A}_{p}f\left(\sigma\right)=pf\left(\sigma\right)+\frac{1-p}{\deg\left(\sigma\right)}\sum_{\sigma'\sim\sigma}f\left(\sigma'\right)=\left(I-\left(1-p\right)\Delta^{+}\right)f\left(\sigma\right),\quad\forall f\in\Omega_{L^{2}}^{d-1}.
\]

\item For every $0\leq p\leq1$ 
\[
\mathscr{E}_{n}^{p}\left(\sigma,\sigma'\right)=\widetilde{\mathcal{E}}_{n}^{p'}\left(\sigma,\sigma'\right),\quad\forall\sigma,\sigma'\in X_{\pm}^{d-1},
\]
where $p'=\frac{p}{1+\left(1-p\right)\left(d-1\right)}$. 
\end{enumerate}
\end{lem}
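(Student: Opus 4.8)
The plan is to handle the two parts separately: I would prove (1) by a first-step analysis of the branching dynamics, and then deduce (2) from (1) together with the self-adjointness of $\mathscr{A}_p$.

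For (1), I would condition on the \emph{first} move of the single particle started at $\sigma$ and use the Markov property and time-homogeneity to write $\mathscr{E}_{n+1}^{p}(\sigma,\sigma') = E^{\sigma}\big[E^{N_{1}}[D_{n}(\sigma')]\big]$. The crucial structural feature coming from the construction via the i.i.d.\ variables $\eta$ is that the particles evolve independently and $D_{n}$ is additive, so by linearity the expected effective count started from any configuration equals the sum of $\mathscr{E}_{n}^{p}(\tilde\sigma,\sigma')$ over its particles $\tilde\sigma$ (counted with multiplicity). After the first step $N_{1}$ is either $\delta_{\sigma}$ (probability $p$) or, having chosen a coface $\tau\in\mathrm{cf}(\sigma)$ (each with probability $\tfrac{1-p}{\deg\sigma}$), the configuration placing one particle on each of the $d$ oriented faces of $\tau$ other than $\sigma$. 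The orientation convention built into the splitting rule (Figure \ref{fig:One_step_of_the_branching_process}) is precisely that these $d$ positions are the neighbors $\tilde\sigma\sim\sigma$ with $\tilde\sigma\cup\sigma=\tau$. Summing over the cofaces collapses the double sum into $\sum_{\tilde\sigma\sim\sigma}$ and yields
\[
\mathscr{E}_{n+1}^{p}(\sigma,\sigma') = p\,\mathscr{E}_{n}^{p}(\sigma,\sigma') + \frac{1-p}{\deg\sigma}\sum_{\tilde\sigma\sim\sigma}\mathscr{E}_{n}^{p}(\tilde\sigma,\sigma') = (\mathscr{A}_{p}\mathscr{E}_{n}^{p})(\sigma,\sigma'),
\]
with $\mathscr{A}_{p}$ acting on the first coordinate. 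The identity $\mathscr{A}_{p}=I-(1-p)\Delta^{+}$ is then immediate from (\ref{eq:Up_Laplacian_that_we_use}).

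For (2), iterating (1) together with $\mathscr{E}_{0}^{p}(\sigma,\sigma')=\ind_{\sigma}(\sigma')=\ind_{\sigma'}(\sigma)$ gives $\mathscr{E}_{n}^{p}(\cdot,\sigma')=\mathscr{A}_{p}^{n}\ind_{\sigma'}$, while the normalized recursion recalled in Subsection \ref{subsec:The--walk}, with the same initial Dirac form, gives $\widetilde{\mathcal{E}}_{n}^{p'}(\sigma,\cdot)=\big(\tfrac{d}{p'(d-1)+1}A_{p'}\big)^{n}\ind_{\sigma}$. The obstruction is that these evolutions act on \emph{different} coordinates, so I cannot match them as recursions. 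I would resolve this using that $\Delta^{+}=\delta_{d}^{*}\delta_{d}$, hence $\mathscr{A}_{p}$, is self-adjoint with respect to the weighted product $\langle\cdot,\cdot\rangle$ (with $w=w_{\shortuparrow}$). Writing $\langle f,g\rangle=\langle Df,g\rangle_{0}$, where $D$ is multiplication by $\deg$ and $\langle f,g\rangle_{0}=\sum_{\sigma}f(\sigma)g(\sigma)$ is the unweighted product, this means the unweighted adjoint of $\mathscr{A}_{p}$ is $D\mathscr{A}_{p}D^{-1}$, which one computes to be $g\mapsto p\,g(\sigma)+(1-p)\sum_{\sigma'\sim\sigma}\tfrac{g(\sigma')}{\deg\sigma'}$. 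A short computation with $p'=\tfrac{p}{1+(1-p)(d-1)}$---namely $\tfrac{d}{p'(d-1)+1}=1+(1-p)(d-1)$, together with $p'\cdot\tfrac{d}{p'(d-1)+1}=p$ and $\tfrac{1-p'}{d}\cdot\tfrac{d}{p'(d-1)+1}=1-p$---identifies this adjoint with $\tfrac{d}{p'(d-1)+1}A_{p'}$. Pairing against Dirac forms then gives
\[
\mathscr{E}_{n}^{p}(\sigma,\sigma') = \big\langle\mathscr{A}_{p}^{n}\ind_{\sigma'},\ind_{\sigma}\big\rangle_{0} = \Big\langle\ind_{\sigma'},\big(\tfrac{d}{p'(d-1)+1}A_{p'}\big)^{n}\ind_{\sigma}\Big\rangle_{0} = \widetilde{\mathcal{E}}_{n}^{p'}(\sigma,\sigma').
\]

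The main obstacle is exactly this coordinate mismatch in (2): the SBRW heat kernel propagates by an operator on the first coordinate, whereas the normalized $(d-1)$-walk expectation process propagates on the second, so the equality is not a formal matching of recursions but genuinely relies on the self-adjointness/reversibility of $\mathscr{A}_{p}$ to transpose coordinates (equivalently, on $\deg(\sigma)\,\mathscr{E}_{n}^{p}(\sigma,\sigma')=\deg(\sigma')\,\mathscr{E}_{n}^{p}(\sigma',\sigma)$). In (1), the only delicate point is the orientation bookkeeping---verifying that the $d$ offspring of a splitting particle land precisely on the neighbors of $\sigma$ inside the chosen coface, with the orientation making them $\sim$-neighbors---but this is exactly how the relation $\sim$ and the splitting rule were defined.
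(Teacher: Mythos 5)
Your proposal is correct and follows essentially the same route as the paper: part (1) by conditioning on $N_{1}$ and using the branching additivity, and part (2) by expressing both kernels as powers of the propagation operators applied to Dirac forms and invoking the transpose identity $\mathscr{A}_{p}=\tfrac{d}{p'\left(d-1\right)+1}A_{p'}^{tr}$ for $p'=\tfrac{p}{1+\left(1-p\right)\left(d-1\right)}$. The only cosmetic difference is that you derive this identity via self-adjointness of $\mathscr{A}_{p}$ and conjugation by the degree operator, whereas the paper verifies the same operator equality directly.
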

\begin{proof}
$ $
\begin{enumerate}
\item By the Markov property, for every $n\geq1$
\begin{align}
\mathscr{E}_{n}\left(\sigma,\sigma'\right) & =E^{\sigma}\left[E^{\sigma}\left[D_{n}\left(\sigma'\right)\Big|N_{1}\right]\right]=E^{\sigma}\left[E^{N_{1}}\left[D_{n-1}\left(\sigma'\right)\right]\right]\nonumber \\
 & =E^{\sigma}\left[\sum_{\sigma''\in X_{\pm}^{d-1}}N_{1}\left(\sigma''\right)\cdot E^{\sigma''}\left[D_{n-1}\left(\sigma'\right)\right]\right]=\sum_{\sigma''\in X_{\pm}^{d-1}}E^{\sigma}\left[N_{1}\left(\sigma''\right)\right]\mathscr{E}_{n-1}\left(\sigma'',\sigma'\right)\nonumber \\
 & =p\mathscr{E}_{n-1}\left(\sigma,\sigma'\right)+\frac{1-p}{\deg\left(\sigma\right)}\cdot\sum_{\sigma''\sim\sigma}\mathscr{E}_{n-1}\left(\sigma'',\sigma'\right).\label{eq:one_step_evolution_of_mathscr=00007BE=00007D}
\end{align}

\item Using part (1) and the equality $\mathscr{E}_{0}\left(\sigma,\sigma'\right)=\ind_{\sigma}\left(\sigma'\right)$
it follows that $\mathscr{E}_{n}^{p}\left(\sigma,\sigma'\right)=\deg\left(\sigma'\right)\cdot\left\langle \mathscr{A}_{p}^{n}\ind_{\sigma},\ind_{\sigma'}\right\rangle $
for every $n\geq0$. Similarly, by (\ref{eq:prop_operator_(d-1)-walk})
\begin{align*}
\widetilde{\mathcal{E}}_{n}^{p'}\left(\sigma,\sigma'\right) & =\left(\frac{d}{p'\left(d-1\right)+1}\right)^{n}\mathcal{E}_{n}^{p'}\left(\sigma,\sigma'\right)=\deg\left(\sigma'\right)\cdot\left(\frac{d}{p'\left(d-1\right)+1}\right)^{n}\left\langle \ind_{\sigma},A_{p'}^{n}\ind_{\sigma'}\right\rangle \\
 & =\deg\left(\sigma'\right)\cdot\left\langle \left(\left(\left(\frac{d}{p'\left(d-1\right)+1}\right)\right)\cdot A_{p'}^{tr}\right)^{n}\ind_{\sigma},\ind_{\sigma'}\right\rangle 
\end{align*}
 for every $n\geq0$, where $A_{p}^{tr}$ is the transpose of $A_{p}$.
The claim now follows since 
\[
\mathscr{A}_{p}=I-\left(1-p\right)\Delta^{+}=\left(\frac{d}{p'\left(d-1\right)+1}\right)\cdot A_{p'}^{tr},
\]
for $p'=\frac{p}{1+\left(1-p\right)\left(d-1\right)}$. 
\end{enumerate}
\end{proof}

\begin{proof}[Proof of Theorem \ref{thm:Finite_complexes_BRW_vs_homology}]
 The proof follows by combining Theorem \ref{thm:(d-1)-walk_vs_homology}
and Lemma \ref{lem:comparison_of_models}(2).\end{proof}
\begin{rem}[Variants on the model ]
\label{rem:variants_on_the_process} Before turning to discuss the
tree structure associated with the SBRW we wish to introduce some
possible variants for the model. First, due to the fact that our main
interest lies in the ESBRW $\left(D_{n}\right)_{n\geq0}$ and not
in the SBRW itself, it is possible to annihilate any pair of particles
on the same cell with different orientation. That is, if at time $n$
there are $N_{n}\left(\sigma\right)=k_{1}$ and $N_{n}\left(\overline{\sigma}\right)=k_{2}$
particles of type $\sigma$ and $\overline{\sigma}$ respectively,
and without loss of generality $k_{1}\geq k_{2}$, then all of them
annihilates except for $k_{1}-k_{2}$ of the $\sigma$ particles.
This variant on the model is nothing else than a different choice
of coupling for the branching of the particles. Indeed, for this choice
any pair of particles on the same cell with different orientation
are coupled to branch together. Other couplings can also be considered.
Secondly, in order to avoid simultaneous splitting of the particles
one can work with a continuous time version where each particle has
a Poisson clock to determine its branching time. Finally, note that
the above model can also be generalized to give a high-dimensional
analogue of weighted random walk on graphs by considering other weight
functions. 

In addition, one can also consider the above process on $k$-oriented
cells of a $d$-complex for every $0\leq k\leq d-1$ and not just
for $k=d-1$. This however is equivalent to studying the original
process on $X^{k+1}$ and therefore falls back to the above setting. 
\end{rem}

\subsection{Expected number of first visits}

The SBRW is a fusion between a multi-type branching process and a
random walk. On the one hand in every step the current population
of particles splits and creates a new population in the same manner
as in a branching process. On the other hand the law that specify
the siblings of each particle is governed by the law of a $\left(d-1\right)$-random
walk on the complex. 

To every branching process, and in particular the SBRW, one can associate
a natural tree structure where the siblings of each particle are the
one generated from it (see Figure \ref{fig:The_branching_process_and_the_tree}
for an illustration). The tree structure also allows us to associate
with each particle a unique sequence of ancestors. These facts are
summarized in the following definition: 
\begin{defn}
$ $
\begin{enumerate}
\item For $\sigma\in X_{\pm}^{d-1}$ and $n\geq0$ let $\Psi_{n}\left(\sigma\right)$
be the set of particles in $\sigma$ at time $n$. Note that $N_{n}\left(\sigma\right)=\left|\Psi_{n}\left(\sigma\right)\right|$.
\item Denote $\Psi_{n}=\biguplus_{\sigma\in X_{\pm}^{d-1}}\Psi_{n}\left(\sigma\right)$,
the set of all particles at time $n$. 
\item To each element in the set of particles at time $n$ one can associate
a unique sequence of ancestors going back to the set of particles
at time $0$. Given $\xi\in\Psi_{n}$ denote by $\mathfrak{a}\xi$
the unique ancestor of $\xi$ in $\Psi_{n-1}$. Continuing recursively
one can define $\mathfrak{a}^{k}\xi=\mathfrak{a}\left(\mathfrak{a}^{k-1}\xi\right)$
for every $2\leq k\leq n$. 
\end{enumerate}
\end{defn}
\begin{figure}[h]
\centering{}\includegraphics[scale=1.1]{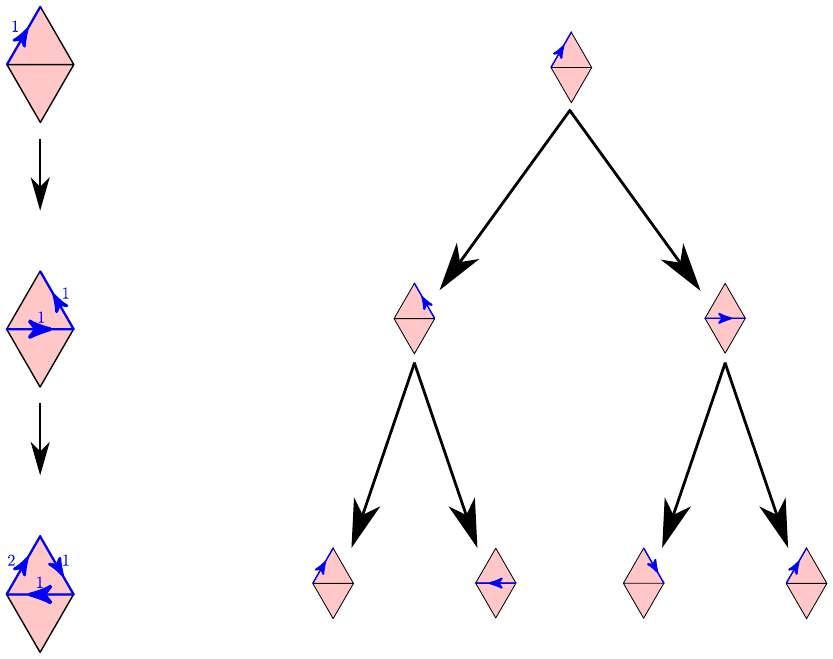}\protect\caption{Two steps of the branching random walk and the tree associated with
it.\label{fig:The_branching_process_and_the_tree}}
\end{figure}

The definition of ancestors of a particle allows us to generalize
the important notion of first return to a vertex from random walks
on graphs:
\begin{defn}
\label{def:expected_num_of_first_visits}For $\sigma\in X_{\pm}^{d-1}$
and $n\geq1$ define $K_{n}\left(\sigma\right)$ to be the number
of particles in $\sigma$ at time $n$ that none of their ancestors
(except perhaps to the one at time zero) were in $\sigma$ or $\overline{\sigma}$.
Namely, 
\[
K_{n}\left(\sigma\right)=\#\left\{ \xi\in\Psi_{n}\left(\sigma\right)\,:\,\mathfrak{a}^{k}\xi\notin\Psi_{n-k}\left(\sigma\right)\cup\Psi_{n-k}\left(\overline{\sigma}\right)\,\,\forall1\leq k<n\right\} .
\]
For $n=0$ define $K_{0}\left(\sigma\right)=0$. We also denote $F_{n}\left(\sigma\right)=K_{n}\left(\sigma\right)-K_{n}\left(\overline{\sigma}\right)$
and $\mathscr{F}_{n}\left(\sigma,\sigma'\right)=E^{\sigma}\left[F_{n}\left(\sigma'\right)\right]$. 
\end{defn}
The main goal of this subsection is to prove the following result:
\begin{thm}
\label{thm:R_via_S_and_overline=00007BS=00007D}Let $X$ be a $d$-complex,
$\sigma\in X_{\pm}^{d-1}$ and $z\in\mathbb{C}$. Define the power
series 
\begin{enumerate}
\item $\mathcal{G}\left(z\right)=\sum_{n=0}^{\infty}\mathscr{E}_{n}\left(\sigma,\sigma\right)z^{n}$ 
\item $\mathfrak{F}\left(z\right)=\sum_{n=0}^{\infty}\mathscr{F}_{n}\left(\sigma,\sigma\right)z^{n}$
\end{enumerate}

Then, for every $z\in\mathbb{C}$ whose absolute value is smaller
than the radii of convergence of both power series 
\[
\mathcal{G}\left(z\right)=\frac{1}{1-\mathfrak{F}\left(z\right)}
\]
as long as $\mathfrak{F}\left(z\right)\neq1$. 

\end{thm}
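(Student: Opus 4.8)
The plan is to establish a high-dimensional renewal equation relating the return kernel $\mathscr{E}_n(\sigma,\sigma)$ to the first-visit quantities $\mathscr{F}_k(\sigma,\sigma)$, and then to convert it into the stated identity by passing to generating functions. Concretely, I will prove that
\[
\mathscr{E}_n(\sigma,\sigma)=\sum_{k=1}^{n}\mathscr{F}_k(\sigma,\sigma)\,\mathscr{E}_{n-k}(\sigma,\sigma),\qquad n\geq1,
\]
together with the initial values $\mathscr{E}_0(\sigma,\sigma)=1$ and $\mathscr{F}_0(\sigma,\sigma)=0$. Since $\mathfrak F(z)=\sum_{k\geq1}\mathscr{F}_k(\sigma,\sigma)z^{k}$ has vanishing constant term, multiplying the renewal equation by $z^{n}$ and summing over $n\geq0$ turns the convolution into a Cauchy product and yields $\mathcal G(z)=1+\mathfrak F(z)\mathcal G(z)$; the Cauchy product is legitimate because inside the common disk of convergence both power series converge absolutely. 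Rearranging gives $\mathcal G(z)\bigl(1-\mathfrak F(z)\bigr)=1$, which is exactly the claim whenever $\mathfrak F(z)\neq1$.

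The heart of the matter is the renewal equation, which I would derive from the branching (tree) structure described above. First I would record the orientation-reversal symmetry $\mathscr{E}_m(\overline\sigma,\sigma)=-\mathscr{E}_m(\sigma,\sigma)$: starting a single particle at $\overline\sigma$ produces a process whose particle counts are the mirror image, under a global flip of orientations, of those started at $\sigma$, so that $E^{\overline\sigma}[N_m(\sigma')]=E^{\sigma}[N_m(\overline{\sigma'})]$ and the sign follows from the definition \eqref{eq:EBRW} of $D_m$. Next I would decompose the particles contributing to $D_n(\sigma)$ according to their \emph{first} visit to $\{\sigma,\overline\sigma\}$. For a particle $\xi\in\Psi_n$ sitting on $\sigma$ or $\overline\sigma$, trace back its ancestors $\mathfrak a^{\,n-k}\xi$ and let $k=k(\xi)\in\{1,\dots,n\}$ be the smallest time $\geq1$ at which the ancestor lies on $\sigma$ or $\overline\sigma$; the corresponding ancestor $\zeta=\mathfrak a^{\,n-k}\xi$ is then exactly a particle counted by $K_k$ at its position. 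The combinatorial point to verify is that this first-visit ancestor is unique and that, conversely, every descendant at time $n$ of such a first-visit particle $\zeta$ has $\zeta$ as its first-visit ancestor; this holds because the ancestors of $\xi$ strictly before time $k$ coincide with the ancestors of $\zeta$, none of which (save the root at time $0$) lie on $\{\sigma,\overline\sigma\}$.

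With this partition in hand I would write
\[
D_n(\sigma)=\sum_{k=1}^{n}\ \sum_{\zeta\ \text{first-visit at time }k}D_n^{(\zeta)}(\sigma),
\]
where $D_n^{(\zeta)}(\sigma)$ denotes the signed number of descendants of $\zeta$ lying on $\sigma$ at time $n$. Taking $E^\sigma$ and conditioning on everything up to time $k$, the branching Markov property guarantees that the subtree emanating from each first-visit particle $\zeta$ evolves as an independent copy of the SBRW started from a single particle at $\mathrm{pos}(\zeta)\in\{\sigma,\overline\sigma\}$, whence $E^\sigma\bigl[D_n^{(\zeta)}(\sigma)\mid\text{history up to }k\bigr]=\mathscr{E}_{n-k}(\mathrm{pos}(\zeta),\sigma)$. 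Summing over the first-visit particles and splitting by position, the contribution of those on $\sigma$ is $K_k(\sigma)\,\mathscr{E}_{n-k}(\sigma,\sigma)$, while that of those on $\overline\sigma$ is $K_k(\overline\sigma)\,\mathscr{E}_{n-k}(\overline\sigma,\sigma)=-K_k(\overline\sigma)\,\mathscr{E}_{n-k}(\sigma,\sigma)$ by the symmetry above; together these equal $F_k(\sigma)\,\mathscr{E}_{n-k}(\sigma,\sigma)$. Taking one more expectation and using $E^\sigma[F_k(\sigma)]=\mathscr{F}_k(\sigma,\sigma)$, together with the fact that $\mathscr{E}_{n-k}(\sigma,\sigma)$ is deterministic, delivers the renewal equation.

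I expect the main obstacle to be the rigorous justification of the branching-Markov step, namely that conditionally on the history up to the first-visit time the distinct first-visit subtrees are genuinely independent fresh copies of the SBRW, combined with the careful sign bookkeeping forced by the two orientations. Once the uniqueness of the first-visit ancestor and the conditional independence of the subtrees are secured, the remainder — both the regrouping of the position-split sum into $F_k$ and the passage to generating functions — is routine, with absolute convergence inside the common disk of convergence supplying the only analytic input required.
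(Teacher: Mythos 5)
Your proposal is correct, and its overall skeleton — first prove the renewal identity $\mathscr{E}_{n}\left(\sigma,\sigma\right)=\sum_{k=1}^{n}\mathscr{F}_{k}\left(\sigma,\sigma\right)\mathscr{E}_{n-k}\left(\sigma,\sigma\right)$, then pass to generating functions to get $\mathcal{G}\left(z\right)=1+\mathfrak{F}\left(z\right)\mathcal{G}\left(z\right)$ — is exactly the paper's; the generating-function step, including the absolute-convergence justification of the Cauchy product, coincides with the paper's proof of Theorem \ref{thm:R_via_S_and_overline=00007BS=00007D}. Where you genuinely diverge is in how the renewal identity (the paper's Lemma \ref{lem:relations_e_K}(3)) is established. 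The paper proves it by induction on $n$, using only one-step conditioning: the evolution equation of Lemma \ref{lem:relations_e_K}(1) together with the one-step recursion (2)(a) for the first-visit counts, followed by an algebraic regrouping. You instead prove it directly, partitioning the particles at time $n$ lying on $\left\{ \sigma,\overline{\sigma}\right\} $ according to their unique first-visit ancestor, and then applying the branching Markov property to each first-visit subtree, together with the orientation-flip symmetry $E^{\overline{\sigma}}\left[N_{m}\left(\sigma'\right)\right]=E^{\sigma}\left[N_{m}\left(\overline{\sigma'}\right)\right]$ — which is legitimate (the global orientation flip is an automorphism of the neighboring relation) and is in fact the same symmetry the paper invokes as $\left(\star\right)$ in the proof of Lemma \ref{lem:Relation_between_generation_functions_for_arboreal_complexes}. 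Your route is the more conceptual one: it mirrors the classical first-return decomposition for graph random walks and makes transparent \emph{why} the convolution structure holds; its cost is that one must formalize the uniqueness of the first-visit ancestor and the conditional fresh-start property of subtrees, though — as you could note to simplify your own argument — only first moments are taken, so linearity of conditional expectation plus the Markov property of a single subtree suffices, and full independence between distinct subtrees is never needed. The paper's inductive route avoids any tree-level bookkeeping at the price of a longer computation, and it is stated for the off-diagonal quantities $\mathscr{E}_{n}\left(\sigma,\sigma'\right)$ as well; your decomposition extends to that generality with only notational changes.
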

\begin{rem}
\label{rem:some_remarks_general_case}$ $
\begin{enumerate}
\item The radii of convergence of the above power series are at least $\frac{1}{d}$
since the definition of SBRW guarantees that 
\[
\left|\mathscr{E}_{n}\left(\sigma,\sigma\right)\right|,\left|\mathscr{F}_{n}\left(\sigma,\sigma\right)\right|\leq d^{n},\quad\forall n\geq0.
\]

\item This is a high-dimensional analogue of (\ref{eq:return_prob_RW}).
\end{enumerate}
\end{rem}
The following lemma contains several connections between $\left(\mathscr{E}_{n}\right)_{n\geq0}$
and $\left(\mathscr{F}_{n}\right)_{n\geq0}$ which will be useful
for the proof of Theorem \ref{thm:R_via_S_and_overline=00007BS=00007D}:
\begin{lem}
\label{lem:relations_e_K}The following relations hold for every $\sigma,\sigma'\in X_{\pm}^{d-1}$: 
\begin{enumerate}
\item $\mathscr{E}_{n}\left(\sigma,\sigma'\right)=\sum_{\sigma''\in X_{\pm}^{d-1}}E^{\sigma}\left[N_{1}\left(\sigma''\right)\right]\mathscr{E}_{n-1}\left(\sigma'',\sigma'\right).$
\item $E^{\sigma}\left[K_{n}\left(\sigma'\right)\right]=\sum_{\sigma''\in\left(X^{d-1}\backslash\sigma'\right)_{\pm}}E^{\sigma}\left[N_{1}\left(\sigma''\right)\right]E^{\sigma''}\left[K_{n-1}\left(\sigma'\right)\right]$
for $n\geq2$. In particular this gives: 

\begin{enumerate}
\item $\mathscr{F}_{n}\left(\sigma,\sigma'\right)=\sum_{\sigma''\in\left(X^{d-1}\backslash\sigma'\right)_{\pm}}E^{\sigma}\left[N_{1}\left(\sigma''\right)\right]\mathscr{F}_{n-1}\left(\sigma'',\sigma'\right)$
for $n\geq2$. 
\item $E^{\sigma}\left[K_{n}\left(\sigma'\right)\right]=\sum_{\sigma_{1},\ldots,\sigma_{n-1}\in\left(X^{d-1}\backslash\sigma'\right)_{\pm}}E^{\sigma}\left[N_{1}\left(\sigma_{1}\right)\right]E^{\sigma_{1}}\left[N_{1}\left(\sigma_{2}\right)\right]\ldots E^{\sigma_{n-2}}\left[N_{1}\left(\sigma_{n-1}\right)\right]E^{\sigma_{n-1}}\left[N_{1}\left(\sigma'\right)\right]$
for $n\geq2$, and $E^{\sigma}\left[K_{1}\left(\sigma'\right)\right]=E^{\sigma}\left[N_{1}\left(\sigma'\right)\right]$. 
\end{enumerate}
\item $\mathscr{E}_{n}\left(\sigma,\sigma'\right)=\sum_{k=1}^{n}\mathscr{F}_{k}\left(\sigma,\sigma'\right)\mathscr{E}_{n-k}\left(\sigma',\sigma'\right)$
for $n\geq1$.
\end{enumerate}
\end{lem}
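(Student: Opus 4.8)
The plan is to prove the three relations in increasing order of difficulty: (1) and (2) both reduce to a one-step conditioning via the branching--Markov property, while (3) requires a genuine renewal decomposition in which keeping track of the two orientations is the crux. Throughout I would use the branching property of the SBRW, namely that, conditioned on the configuration at a given time, the subtrees emanating from the particles present are independent copies of the process started from the respective single particles. For (1), I would condition on $N_1$ and argue exactly as in the derivation of (\ref{eq:one_step_evolution_of_mathscr=00007BE=00007D}): by the branching property the descendants of the time-$1$ particles evolve independently, so $E^{N_1}\left[D_{n-1}(\sigma')\right]=\sum_{\sigma''}N_1(\sigma'')\,\mathscr{E}_{n-1}(\sigma'',\sigma')$, a particle at $\sigma''$ contributing $\mathscr{E}_{n-1}(\sigma'',\sigma')$ on average with the two orientations consistently accounted for because $\mathscr{E}$ is a form in its first slot. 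Taking $E^{\sigma}$ and using the tower property gives (1).

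For (2) I would again condition on the first step but now carry the first-visit constraint. A particle $\xi\in\Psi_n(\sigma')$ counted by $K_n(\sigma')$ has, in particular, its time-$1$ ancestor $\mathfrak{a}^{n-1}\xi$ outside $\{\sigma',\overline{\sigma'}\}$, so its lineage descends from a time-$1$ particle sitting at some $\sigma''\notin\{\sigma',\overline{\sigma'}\}$; relative to that particle (viewing time $1$ as a fresh start at $\sigma''$), the remaining constraint on the original-time ancestors $2,\ldots,n-1$ is precisely the defining constraint of $K_{n-1}(\sigma')$ started from $\sigma''$. The branching--Markov property then factorizes the expected count into $\sum_{\sigma''\in\left(X^{d-1}\backslash\sigma'\right)_{\pm}}E^{\sigma}[N_1(\sigma'')]\,E^{\sigma''}[K_{n-1}(\sigma')]$, which is (2). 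Subtracting the same identity for $\overline{\sigma'}$ and recalling $F_n=K_n(\cdot)-K_n(\overline{\cdot})$ yields (2a), the admissible index set $\left(X^{d-1}\backslash\sigma'\right)_{\pm}$ being unchanged under $\sigma'\mapsto\overline{\sigma'}$; iterating (2) down to the base case $K_1(\sigma')=N_1(\sigma')$ (for $n=1$ there are no intermediate ancestors to constrain) produces the telescoped product (2b).

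The heart of the lemma is (3), and it is where I expect the real effort to lie. I would use the renewal decomposition: for each $\xi\in\Psi_n(\sigma')$ let $k\in\{1,\ldots,n\}$ be the \emph{earliest} time its ancestral lineage lands in $\{\sigma',\overline{\sigma'}\}$ (well defined since $\xi$ itself is at $\sigma'$) and let $\rho\in\{\sigma',\overline{\sigma'}\}$ record the location at that time. The time-$k$ ancestor is then counted by $K_k(\rho)$, and because $k$ is the earliest hit it is insensitive to the lineage's behaviour after time $k$, so every descendant of such an ancestor that is at $\sigma'$ at time $n$ is automatically assigned this same $k$. Conditioning on the history up to time $k$ (which makes $K_k(\rho)$ measurable) and using that the subtrees of the $K_k(\rho)$-particles are independent fresh copies of the process, the expected number of these descendants factorizes as $E^{\sigma}[K_k(\rho)]\,E^{\rho}[N_{n-k}(\sigma')]$. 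Summing over $k$ and $\rho$,
\[
E^{\sigma}[N_n(\sigma')]=\sum_{k=1}^{n}\Big(E^{\sigma}[K_k(\sigma')]\,E^{\sigma'}[N_{n-k}(\sigma')]+E^{\sigma}[K_k(\overline{\sigma'})]\,E^{\overline{\sigma'}}[N_{n-k}(\sigma')]\Big),
\]
together with the analogous identity obtained by replacing $\sigma'$ with $\overline{\sigma'}$ on the left.

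Subtracting these two identities yields a four-term expression for $\mathscr{E}_n(\sigma,\sigma')$, and the decisive final step is to collapse it using the orientation symmetry of the model. Since the SBRW is equivariant under the global orientation-reversal involution $\tau\mapsto\overline{\tau}$, one has $E^{\overline{\sigma'}}[N_m(\cdot)]=E^{\sigma'}[N_m(\overline{\,\cdot\,})]$, whence $\mathscr{E}_m(\overline{\sigma'},\sigma')=-\mathscr{E}_m(\sigma',\sigma')$. This is exactly what turns the two $K$-contributions into the single difference $\mathscr{F}_k(\sigma,\sigma')=E^{\sigma}[K_k(\sigma')]-E^{\sigma}[K_k(\overline{\sigma'})]$ multiplying $\mathscr{E}_{n-k}(\sigma',\sigma')$, giving (3). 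The main obstacle is precisely this orientation bookkeeping: one must treat both possible first-visit orientations $\rho$ and verify that the reflection symmetry recombines them into $\mathscr{F}$ rather than into $K$ alone. This is the genuinely high-dimensional feature, absent in the graph case where orientation plays no role and the bare first-return quantity already suffices.
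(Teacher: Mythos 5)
Your parts (1) and (2) follow essentially the same route as the paper: one-step conditioning via the branching--Markov property for (1) and (2), subtraction over the orientation-invariant index set $\left(X^{d-1}\backslash\sigma'\right)_{\pm}$ for (2a), and iteration down to the base case $K_{1}\left(\sigma'\right)=N_{1}\left(\sigma'\right)$ for (2b); all of this is correct. For part (3), however, you take a genuinely different and equally valid path. The paper proves (3) by induction on $n$: it applies part (1), inserts the induction hypothesis for $\mathscr{E}_{n-1}\left(\sigma'',\sigma'\right)$, and collapses the resulting double sum using (2a) together with the identity $\sum_{\sigma''\in X_{\pm}^{d-1}}E^{\sigma}\left[N_{1}\left(\sigma''\right)\right]\mathscr{F}_{k}\left(\sigma'',\sigma'\right)=\mathscr{F}_{k+1}\left(\sigma,\sigma'\right)+\mathscr{F}_{1}\left(\sigma,\sigma'\right)\mathscr{F}_{k}\left(\sigma',\sigma'\right)$, whose derivation rests on the antisymmetry $\mathscr{F}_{k}\left(\overline{\sigma'},\sigma'\right)=-\mathscr{F}_{k}\left(\sigma',\sigma'\right)$. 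You instead prove (3) directly by a renewal decomposition of $E^{\sigma}\left[N_{n}\left(\sigma'\right)\right]$ according to the earliest time $k$ and orientation $\rho\in\left\{ \sigma',\overline{\sigma'}\right\} $ at which the ancestral lineage hits the unoriented cell, and then subtract the mirror identity for $\overline{\sigma'}$, collapsing the four terms via $\mathscr{E}_{m}\left(\overline{\sigma'},\sigma'\right)=-\mathscr{E}_{m}\left(\sigma',\sigma'\right)$; I checked the bookkeeping and it is sound (in particular the partition by earliest hit is exact, all expectations are finite since $N_{n}\leq d^{n}$, and the orientation-reversal equivariance you invoke is the same symmetry the paper uses, e.g., at the step marked $\left(\star\right)$ in the proof of Lemma \ref{lem:Relation_between_generation_functions_for_arboreal_complexes}(2), which is essentially your decomposition applied there to a different pair of cells). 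The trade-off: the paper's induction is purely algebraic once (1) and (2a) are in place, so it avoids justifying the earliest-hit partition and the conditional branching property at intermediate times; your argument is closer in spirit to the classical first-return identity for graphs, makes the emergence of $\mathscr{F}$ transparent, and yields the stronger unsubtracted identity $E^{\sigma}\left[N_{n}\left(\sigma'\right)\right]=\sum_{k=1}^{n}\sum_{\rho}E^{\sigma}\left[K_{k}\left(\rho\right)\right]E^{\rho}\left[N_{n-k}\left(\sigma'\right)\right]$ along the way.
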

\begin{proof}
$ $
\begin{enumerate}
\item The proof follows by the same argument as in Lemma \ref{lem:comparison_of_models}(1). 
\item Using the Markov property, for every $n\geq2$ 
\begin{align*}
 & E^{\sigma}\left[K_{n}\left(\sigma'\right)\right]\\
= & E^{\sigma}\left[\#\left\{ \xi\in\Psi_{n}\left(\sigma'\right)\,:\,\mathfrak{a}^{k}\xi\notin\Psi_{n-k}\left(\sigma'\right)\cup\Psi_{n-k}\left(\overline{\sigma'}\right)\,\forall1\leq k<n\right\} \right]\\
= & \!\!\!\!\!\!\sum_{\,\,\sigma''\in\left(X^{d-1}\backslash\sigma'\right)_{\pm}}\!\!\!\!\!\!\!\! E^{\sigma}\left[N_{1}\left(\sigma_{1}\right)\right]E^{\sigma''}\left[\#\left\{ \xi\in\Psi_{n-1}\left(\sigma'\right)\,:\,\mathfrak{a}^{k}\xi\notin\Psi_{n-1-k}\left(\sigma'\right)\cup\Psi_{n-1-k}\left(\overline{\sigma'}\right)\,\forall1\leq k<n-1\right\} \right]\\
= & \!\!\!\!\!\!\sum_{\,\,\sigma''\in\left(X^{d-1}\backslash\sigma'\right)_{\pm}}\!\!\!\!\!\!\!\! E^{\sigma}\left[N_{1}\left(\sigma''\right)\right]E^{\sigma''}\left[K_{n-1}\left(\sigma\right)\right].
\end{align*}
(2)(a) follows from the fact that $\mathscr{F}_{n}\left(\sigma,\sigma'\right)=E^{\sigma}\left[F_{n}\left(\sigma'\right)\right]=E^{\sigma}\left[K_{n}\left(\sigma'\right)-K_{n}\left(\overline{\sigma'}\right)\right]$.
(2)(b) follows by induction using the fact that for $n=1$:
\begin{align*}
E^{\sigma}\left[K_{1}\left(\sigma'\right)\right] & =E^{\sigma}\left[\#\left\{ \xi\in\Psi_{1}\left(\sigma'\right)\,:\,\mathfrak{a}^{k}\xi\notin\Psi_{1-k}\left(\sigma'\right)\cup\Psi_{1-k}\left(\overline{\sigma'}\right)\,\forall1\leq k<1\right\} \right]\\
 & =E^{\sigma}\left[\left|\Psi_{1}\left(\sigma'\right)\right|\right]=E^{\sigma}\left[N_{1}\left(\sigma'\right)\right].
\end{align*}

\item The proof follows by induction and the Markov property. First note
that for $n=1$ 
\[
\mathscr{E}_{1}\left(\sigma,\sigma'\right)=E^{\sigma}\left[N_{1}\left(\sigma'\right)\right]=\sum_{k=1}^{1}\mathscr{F}_{k}\left(\sigma,\sigma'\right)\mathscr{E}_{1-k}\left(\sigma',\sigma'\right).
\]
Assume next that the relation holds for $n-1$, then by part (1) 
\begin{align*}
\mathscr{E}_{n}\left(\sigma,\sigma'\right) & =\sum_{\sigma''\in X_{\pm}^{d-1}}E^{\sigma}\left[N_{1}\left(\sigma''\right)\right]\mathscr{E}_{n-1}\left(\sigma'',\sigma'\right)\\
 & =\sum_{\sigma''\in X_{\pm}^{d-1}}E^{\sigma}\left[N_{1}\left(\sigma''\right)\right]\sum_{k=1}^{n-1}\mathscr{F}_{k}\left(\sigma'',\sigma'\right)\mathscr{E}_{n-1-k}\left(\sigma',\sigma'\right)\\
 & =\sum_{k=1}^{n-1}\left(\sum_{\sigma''\in X_{\pm}^{d-1}}E^{\sigma}\left[N_{1}\left(\sigma''\right)\right]\mathscr{F}_{k}\left(\sigma'',\sigma'\right)\right)\mathscr{E}_{n-1-k}\left(\sigma',\sigma'\right).
\end{align*}
However by (2)(a) 
\begin{align*}
\sum_{\sigma''\in X_{\pm}^{d-1}}E^{\sigma}\left[N_{1}\left(\sigma''\right)\right]\mathscr{F}_{k}\left(\sigma'',\sigma'\right) & =\sum_{\sigma''\in\left(X^{d-1}\backslash\sigma'\right)_{\pm}}E^{\sigma}\left[N_{1}\left(\sigma''\right)\right]\mathscr{F}_{k}\left(\sigma'',\sigma'\right)\\
 & +E^{\sigma}\left[N_{1}\left(\sigma'\right)\right]\mathscr{F}_{k}\left(\sigma',\sigma'\right)+E^{\sigma}\left[N_{1}\left(\overline{\sigma'}\right)\right]\mathscr{F}_{k}\left(\overline{\sigma'},\sigma'\right)\\
 & =\mathscr{F}_{k+1}\left(\sigma,\sigma'\right)+\left(E^{\sigma}\left[N_{1}\left(\sigma'\right)\right]-E^{\sigma}\left[N_{1}\left(\overline{\sigma'}\right)\right]\right)\mathscr{F}_{k}\left(\sigma',\sigma'\right)\\
 & =\mathscr{F}_{k+1}\left(\sigma,\sigma'\right)+\mathscr{F}_{1}\left(\sigma,\sigma'\right)\mathscr{F}_{k}\left(\sigma',\sigma'\right)
\end{align*}
and therefore by the induction hypothesis 
\begin{align*}
 & \mathscr{E}_{n}\left(\sigma,\sigma'\right)\\
= & \sum_{k=1}^{n-1}\mathscr{F}_{k+1}\left(\sigma,\sigma'\right)\mathscr{E}_{n-1-k}\left(\sigma',\sigma'\right)+\sum_{k=1}^{n-1}\mathscr{F}_{1}\left(\sigma,\sigma'\right)\mathscr{F}_{k}\left(\sigma',\sigma'\right)\mathscr{E}_{n-1-k}\left(\sigma',\sigma'\right)\\
= & \sum_{k=2}^{n}\mathscr{F}_{k}\left(\sigma,\sigma'\right)\mathscr{E}_{n-k}\left(\sigma',\sigma'\right)+\mathscr{F}_{1}\left(\sigma,\sigma'\right)\cdot\sum_{k=1}^{n-1}\mathscr{F}_{k}\left(\sigma',\sigma'\right)\mathscr{E}_{n-1-k}\left(\sigma',\sigma'\right)\\
= & \sum_{k=2}^{n}\mathscr{F}_{k}\left(\sigma,\sigma'\right)\mathscr{E}_{n-k}\left(\sigma',\sigma'\right)+\mathscr{F}_{1}\left(\sigma,\sigma'\right)\cdot\mathscr{E}_{n-1}\left(\sigma',\sigma'\right)\\
= & \sum_{k=1}^{n}\mathscr{F}_{k}\left(\sigma,\sigma'\right)\mathscr{E}_{n-k}\left(\sigma',\sigma'\right).
\end{align*}

\end{enumerate}
\end{proof}

\begin{proof}[Proof of Proposition \ref{thm:R_via_S_and_overline=00007BS=00007D}]
 The statement will follow once we show that for every $z\in\mathbb{C}$
whose absolute value is smaller than the radii of convergence of both
power series 
\[
\mathcal{G}\left(z\right)=1+\mathfrak{F}\left(z\right)\mathcal{G}\left(z\right).
\]
This however follows from Lemma \ref{lem:relations_e_K}(3) since
\begin{align*}
\mathcal{G}\left(z\right) & =\sum_{n=0}^{\infty}\mathscr{E}_{n}\left(\sigma_{0},\sigma_{0}\right)z^{n}=1+\sum_{n=1}^{\infty}\mathscr{E}_{n}\left(\sigma_{0},\sigma_{0}\right)z^{n}=1+\sum_{n=1}^{\infty}\left(\sum_{k=1}^{n}\mathscr{F}_{k}\left(\sigma_{0},\sigma_{0}\right)\mathscr{E}_{n-k}\left(\sigma_{0},\sigma_{0}\right)\right)z^{n}\\
 & =1+\sum_{k=1}^{\infty}\mathscr{F}_{k}\left(\sigma_{0},\sigma_{0}\right)z^{k}\left(\sum_{n=k}^{\infty}\mathscr{E}_{n-k}\left(\sigma_{0},\sigma_{0}\right)z^{n-k}\right)=1+\left(\sum_{k=1}^{\infty}\mathscr{F}_{k}\left(\sigma_{0},\sigma_{0}\right)z^{k}\right)\cdot\mathcal{G}\left(z\right)\\
 & =1+\mathfrak{F}\left(z\right)\mathcal{G}\left(z\right).
\end{align*}

\end{proof}

\section{Arboreal complexes \label{sec:arboreal_complexes}}

The goal of this Section is study ESBRW on arboreal complexes. The
two main results are Theorem \ref{thm:The_spectral_measure_of_arboreal_complexes},
which is proved in Subsections \ref{sub:Finding_G(z)} and \ref{sub:Finding-the-spectral-measure},
and Corollary \ref{cor:Transience_and_recurrence_of_T^d_k}, which
is proved in Subsection \ref{sub:Transience-and-recurrence_on_arboreal_complexes}.
The proof of Theorem \ref{thm:The_spectral_measure_of_arboreal_complexes}
is separated into two parts. First, using the transitive and tree
like structure of the regular arboreal complex, we find an explicit
formula for $\mathcal{G}\left(z\right)=\sum_{n=0}^{\infty}\mathscr{E}_{n}\left(\sigma,\sigma\right)z^{n}$.
Secondly, using the precise expression for $\mathcal{G}\left(z\right)$
and the Stieltjes transform the spectral measure is obtained. The
proof is similar in spirit to Kesten's proof for $k$-regular trees
\cite{Ke59}, however a special care is needed since the terms $\mathscr{E}_{n}\left(\sigma,\sigma\right)$
and $\mathscr{F}_{n}\left(\sigma,\sigma\right)$ are not non-negative
as in the graph case. 

We start by recalling the definition of arboreal complexes:
\begin{defn}[{Arboreal complexes \cite[Definition 3.2]{PR12}}]
 \label{def:regular_arboreal_complex}We say that a $d$-complex
is \emph{arboreal }if it is $\left(d-1\right)$-connected, and has
no simple $d$-loops. That is, there are no non-backtracking closed
loops of $d$-cells, $\tau_{0},\tau_{1},\ldots,\tau_{n}=\tau_{0}$
such that $\dim\left(\tau_{i}\cap\tau_{i+1}\right)=d-1$ and $\tau_{i}\neq\tau_{i+2}$
(the chain is non-backtracking). As in the graph case for every $k\geq1$
there exists a unique $k$-regular arboreal $d$-complex denoted $T_{k}^{d}$. 
\end{defn}
The following choice of oriented cells in $T_{k}^{d}$ will be useful
for the proof: Choose an arbitrary $\left(d-1\right)$-cell $\sigma_{0}\in X_{\pm}^{d-1}$
and call it the $0^{th}$ layer of $T_{k}^{d}$. Define the $1^{st}$
layer to be all the oriented $\left(d-1\right)$-cells which are neighbors
of $\sigma_{0}$ (there are $k\cdot d$ such cells) and denote one
of them by $\sigma_{1}$. The $2^{nd}$ layer of $T_{k}^{d}$ is the
set of oriented $\left(d-1\right)$-cells which are neighbors of a
$\left(d-1\right)$-cell in the $1^{st}$ layer, such that none of
their oriented versions are in the $0^{th}$ or $1^{st}$ layers.
Finally, let $\sigma_{2}$ be a representative in the $2^{nd}$ layer
which is a neighbor of $\sigma_{1}$. One can continue in the same
manner, defining all the layers of $T_{k}^{d}$ around $\sigma_{0}$,
eventually ending up with a choice of orientation for $T_{k}^{d}$.
Here however, we don't need  the full layer structure. Figure \ref{fig:The-orientation-T_2_2}
demonstrates a choice for $\sigma_{0},\sigma_{1},\sigma_{2}$ and
the layers structure in $T_{2}^{2}$.

\begin{figure}[h]
\begin{centering}
\includegraphics[scale=1.2]{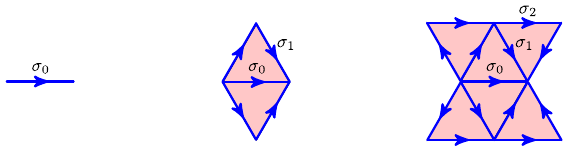}
\par\end{centering}

\protect\caption{\label{fig:The-orientation-T_2_2}The $0^{th}$, $1^{st}$ and $2^{nd}$
layers in $T_{2}^{2}$ with a choice for $\sigma_{0},\sigma_{1}$
and $\sigma_{2}$. }
\end{figure}

\subsection{Finding $\mathcal{G}\left(z\right)$\label{sub:Finding_G(z)}}

Let 
\[
\mathcal{G}\left(z\right)=\sum_{n=0}^{\infty}\mathscr{E}_{n}^{p}\left(\sigma_{0},\sigma_{0}\right)z^{n}\quad,\quad\mathfrak{F}\left(z\right)=\sum_{n=0}^{\infty}\mathscr{F}_{n}^{p}\left(\sigma_{0},\sigma_{0}\right)\quad,\quad\mathcal{U}\left(z\right)=\sum_{n=0}^{\infty}\mathscr{F}_{n}^{p}\left(\sigma_{1},\sigma_{0}\right)z^{n},
\]
and set $\mathbf{r}$ to be the minimum of the radii of convergence
of the above power series%
\footnote{As noted in Remark \ref{rem:some_remarks_general_case}, $\mathbf{r}\geq\frac{1}{d}>0$.%
}. 
\begin{lem}
\label{lem:Relation_between_generation_functions_for_arboreal_complexes}For
the $p$-lazy ESBRW on $T_{k}^{d}$ the following relations hold for
every $z\in\mathbb{C}$ such that $\left|z\right|<\mathbf{r}$. 
\begin{enumerate}
\item $\mathfrak{F}\left(z\right)=pz+\left(1-p\right)dz\cdot\mathcal{U}\left(z\right).$
\item $\mathcal{U}\left(z\right)=pz\mathcal{U}\left(z\right)+\left(1-p\right)\left[\frac{1}{k}z-\frac{d-1}{k}z\mathcal{U}\left(z\right)+\frac{k-1}{k}dz\left(\mathcal{U}\left(z\right)\right)^{2}\right]$.
\end{enumerate}
\end{lem}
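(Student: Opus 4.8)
The plan is to prove both identities by a first-step analysis, exploiting three facts: the one-step transfer weights of the SBRW, the transitivity of the automorphism group of $T_{k}^{d}$ on oriented neighbouring pairs, and the antisymmetry $\mathscr{F}_{n}\left(\overline{\sigma},\sigma'\right)=-\mathscr{F}_{n}\left(\sigma,\sigma'\right)$ (which follows by globally reversing all orientations, a measure-preserving involution of trajectories under which $K_{n}\left(\sigma'\right)$ and $K_{n}\left(\overline{\sigma'}\right)$ are interchanged). The basic input is that, starting from a single particle at $\sigma$, one has $E^{\sigma}\left[N_{1}\left(\sigma\right)\right]=p$, $E^{\sigma}\left[N_{1}\left(\sigma''\right)\right]=\frac{1-p}{k}$ for each of the $kd$ oriented neighbours $\sigma''\sim\sigma$ (recall $\deg\equiv k$ on $T_{k}^{d}$, and each neighbour lies in a unique coface), and $E^{\sigma}\left[N_{1}\left(\cdot\right)\right]=0$ otherwise; in particular $\mathscr{F}_{1}\left(\sigma,\sigma'\right)=E^{\sigma}\left[N_{1}\left(\sigma'\right)\right]-E^{\sigma}\left[N_{1}\left(\overline{\sigma'}\right)\right]$ by antisymmetry in the second argument.

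For part (1) I would feed these weights into the recursion of Lemma~\ref{lem:relations_e_K}(2)(a). Since that sum excludes $\sigma_{0}$ and $\overline{\sigma_{0}}$, the stay-put event drops out and the sum runs precisely over the $kd$ neighbours of $\sigma_{0}$; transitivity (an automorphism fixing the oriented cell $\sigma_{0}$ and carrying any oriented neighbour to $\sigma_{1}$) gives $\mathscr{F}_{n-1}\left(\sigma'',\sigma_{0}\right)=\mathscr{F}_{n-1}\left(\sigma_{1},\sigma_{0}\right)$ for every neighbour $\sigma''\sim\sigma_{0}$. Hence $\mathscr{F}_{n}\left(\sigma_{0},\sigma_{0}\right)=\left(1-p\right)d\,\mathscr{F}_{n-1}\left(\sigma_{1},\sigma_{0}\right)$ for $n\geq2$, while $\mathscr{F}_{1}\left(\sigma_{0},\sigma_{0}\right)=p$. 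Multiplying by $z^{n}$, summing, and using $\mathscr{F}_{0}\equiv0$ collapses this into $\mathfrak{F}\left(z\right)=pz+\left(1-p\right)dz\,\mathcal{U}\left(z\right)$.

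For part (2) I would run the same first-step analysis from $\sigma_{1}$, but now the $kd$ neighbours of $\sigma_{1}$ split into three geometrically distinct classes with respect to the target $\sigma_{0}$. Writing $\tau_{0}=\sigma_{0}\cup\sigma_{1}$ for the unique $d$-cell containing both, the stay-put event (present here because $\sigma_{1}\neq\sigma_{0}$) contributes $pz\,\mathcal{U}\left(z\right)$; choosing $\tau_{0}$ (probability $\tfrac{1-p}{k}$) produces the one sibling lying on $\sigma_{0}$ itself, a genuine first visit contributing $+z$, together with the $d-1$ remaining faces of $\tau_{0}$, which are neighbours of $\sigma_{0}$ but carry the \emph{opposite} orientation to the standard one (a short orientation computation shows $\overline{\sigma''}\sim\sigma_{0}$), so that by antisymmetry and transitivity each contributes $-z\,\mathcal{U}\left(z\right)$; finally, choosing one of the other $k-1$ cofaces yields $d$ ``layer-two'' cells apiece. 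Collecting the first two classes already accounts for the terms $\tfrac{1-p}{k}z$ and $-\tfrac{\left(1-p\right)\left(d-1\right)}{k}z\,\mathcal{U}\left(z\right)$.

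The main obstacle is the quadratic term coming from the layer-two cells, and this is exactly where the arboreal hypothesis enters and where, as the authors warn, signs must be watched. Because $T_{k}^{d}$ has no simple $d$-loops, the cell $\sigma_{1}$ separates $\sigma_{0}$ from every layer-two cell $\sigma''$: each ancestral line realising a first visit to $\sigma_{0}$ from $\sigma''$ must pass through $\left\{ \sigma_{1},\overline{\sigma_{1}}\right\} $. I would make this rigorous through the path expansion of Lemma~\ref{lem:relations_e_K}(2)(b), splitting each admissible path at its first hit of $\left\{ \sigma_{1},\overline{\sigma_{1}}\right\} $; the branching Markov property then factors the signed first-passage generating function from $\sigma''$ to $\sigma_{0}$ as the convolution of the one from $\sigma''$ to $\sigma_{1}$ with the one from $\sigma_{1}$ to $\sigma_{0}$. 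Self-similarity of $T_{k}^{d}$ identifies the first factor with $\mathcal{U}\left(z\right)$ (since $\sigma''\sim\sigma_{1}$ just as $\sigma_{1}\sim\sigma_{0}$) and the second is $\mathcal{U}\left(z\right)$ by definition, the signs matching because an arrival at $\overline{\sigma_{1}}$ both reverses the first factor and, via antisymmetry, the continuation. Each of the $\left(k-1\right)d$ layer-two cells therefore contributes $z\,\mathcal{U}\left(z\right)^{2}$, and after weighting by $\tfrac{1-p}{k}$ this yields the last term $\tfrac{\left(1-p\right)\left(k-1\right)d}{k}z\,\mathcal{U}\left(z\right)^{2}$, completing the identity in (2).
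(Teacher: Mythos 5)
Your proposal is correct and follows essentially the same route as the paper: part (1) is the identical first-step analysis via Lemma \ref{lem:relations_e_K}(2), and part (2) uses the same three-class decomposition of the neighbours of $\sigma_{1}$ together with the same factorization $\sum_{n\geq1}\mathscr{F}_{n}\left(\sigma_{2},\sigma_{0}\right)z^{n}=\left(\mathcal{U}\left(z\right)\right)^{2}$, obtained by splitting at the first passage through $\left\{ \sigma_{1},\overline{\sigma_{1}}\right\} $ (guaranteed by arboreality) and using the reversal identity $E^{\overline{\sigma}}\left[K_{k}\left(\sigma'\right)\right]=E^{\sigma}\left[K_{k}\left(\overline{\sigma'}\right)\right]$; whether one splits the expectations of the $K$'s directly, as the paper does, or the path expansion of Lemma \ref{lem:relations_e_K}(2)(b), as you do, is the same argument. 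One caveat on your stated mechanism for transitivity: for $d=2$ the automorphism group does \emph{not} act transitively on oriented neighbouring pairs (an automorphism fixing the oriented cell $\left[u,v\right]$ fixes both $u$ and $v$, hence carries $\left[u,w\right]$ only to cells of the form $\left[u,w'\right]$, never to $\left[w',v\right]$), so the equality of the power series across the whole first layer requires combining a complex automorphism with the global orientation-reversal involution you introduced at the outset --- a harmless repair, since that involution is already in your toolkit and is precisely the identity the paper invokes at its step $\left(\star\right)$.
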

\begin{proof}
$ $
\begin{enumerate}
\item By Lemma \ref{lem:relations_e_K}(2) 
\begin{align}
\mathfrak{F}\left(z\right)= & \sum_{n=0}^{\infty}\mathscr{F}_{n}\left(\sigma_{0},\sigma_{0}\right)z^{n}=\mathscr{F}_{1}\left(\sigma_{0},\sigma_{0}\right)z+\sum_{n=2}^{\infty}\mathscr{F}_{n}\left(\sigma_{0},\sigma_{0}\right)z^{n}\nonumber \\
= & pz+\sum_{\sigma''\in\left(X^{d-1}\backslash\sigma_{0}\right)_{\pm}}E^{\sigma_{0}}\left[N_{1}\left(\sigma''\right)\right]z\cdot\left(\sum_{n=2}^{\infty}\mathscr{F}_{n-1}\left(\sigma'',\sigma_{0}\right)z^{n-1}\right).\label{eq:finding_mathfrak=00007BF=00007D}
\end{align}
Due to the tree like structure of $T_{k}^{d}$, for $\sigma''\in\left(X^{d-1}\backslash\sigma_{0}\right)_{\pm}$
we have $E^{\sigma_{0}}\left[N_{1}\left(\sigma''\right)\right]=0$
unless $\sigma''$ is in the $1^{st}$ layer of $T_{k}^{d}$. In addition
by the transitive structure of $T_{k}^{d}$ the power series $\sum_{n=2}^{\infty}\mathscr{F}_{n-1}\left(\sigma'',\sigma_{0}\right)z^{n-1}$
is the same for every $\sigma''$ in the first layer of $T_{k}^{d}$
and equals $\mathcal{U}\left(z\right)$. Thus
\begin{align*}
\mathfrak{F}\left(z\right) & =pz+\sum_{\tiny{\begin{array}{c}
\sigma''\in\mbox{first }\\
\mbox{layer of }X_{+}^{d-1}
\end{array}}}E^{\sigma_{0}}\left[N_{1}\left(\sigma''\right)\right]z\cdot\mathcal{U}\left(z\right)=pz+\left(1-p\right)z\cdot\mathcal{U}\left(z\right).
\end{align*}

\item As in part (1) the claim follows by a one step analysis of the ESBRW.
Using the Markov property, Lemma \ref{lem:relations_e_K}(2) and a
similar argument to the one in (\ref{eq:finding_mathfrak=00007BF=00007D})
\begin{align*}
\mathcal{U}\left(z\right) & =\mathscr{F}_{1}\left(\sigma_{1},\sigma_{0}\right)z+\sum_{\sigma''\in X_{\pm}^{d-1}}E^{\sigma_{1}}\left[N_{1}\left(\sigma''\right)\right]z\cdot\left(\sum_{n=2}^{\infty}\mathscr{F}_{n-1}\left(\sigma'',\sigma_{0}\right)z^{n-1}\right).
\end{align*}
Due to the tree structure of $T_{k}^{d}$ 
\[
E^{\sigma_{1}}\left[N_{1}\left(\sigma''\right)\right]=\begin{cases}
p & \quad,\,\sigma''=\sigma_{1}\\
\left(1-p\right)\frac{1}{k} & \quad,\,\sigma''=\sigma_{0}\\
\left(1-p\right)\frac{1}{k} & \quad,\,\sigma''\,\mbox{is in the }\ensuremath{2^{nd}}\,\mbox{layer of }T_{k}^{d}\,\mbox{and }\sigma''\sim\sigma_{1}\\
\left(1-p\right)\frac{1}{k} & \quad,\,\overline{\sigma''}\,\mbox{is in the }\ensuremath{1^{st}}\,\mbox{layer of }T_{k}^{d}\,\mbox{and }\sigma''\cup\sigma_{1}\,\mbox{is a }d-\mbox{cell}\\
0 & \quad,\,\mbox{otherwise}
\end{cases}
\]
and by its transience 
\[
\sum_{n=2}^{\infty}\mathscr{F}_{n-1}\left(\sigma'',\sigma_{0}\right)z^{n-1}=\begin{cases}
\mathcal{U}\left(z\right) & ,\,\sigma''=\sigma_{1}\\
\sum_{n=1}^{\infty}\limits\mathscr{F}_{n}\left(\sigma_{2},\sigma_{0}\right)z^{n} & ,\,\sigma''\,\mbox{is in the }\ensuremath{2^{nd}}\,\mbox{layer of }T_{k}^{d}\,\mbox{and }\sigma''\sim\sigma_{1}\\
-\mathcal{U}\left(z\right) & ,\,\overline{\sigma''}\,\mbox{is in the }\ensuremath{1^{st}}\,\mbox{layer of }T_{k}^{d}\,\mbox{and }\sigma''\cup\sigma_{1}\\
 & \,\,\mbox{is a }d-\mbox{cell}
\end{cases}.
\]
Finally, note that the number of $\sigma''$ in the $2^{nd}$ layer
of $T_{k}^{d}$ such that $\sigma''\sim\sigma_{1}$ is exactly $d\left(k-1\right)$
and that the number of $\sigma''$such that $\overline{\sigma''}$
is in the $1^{st}$ layer and $\sigma''\cup\sigma_{1}$ is a $d-$cell
is exactly $d-1$. Combining all of the above gives 
\[
\mathcal{U}\left(z\right)=pz\cdot\mathcal{U}\left(z\right)+\left(1-p\right)\left[\frac{1}{k}z-\frac{d-1}{k}z\cdot\mathcal{U}\left(z\right)+\frac{k-1}{k}dz\cdot\sum_{n=1}^{\infty}\mathscr{F}_{n}\left(\sigma_{2},\sigma_{0}\right)z^{n}\right].
\]
Thus, the proof will be complete once we show that $\sum_{n=1}^{\infty}\mathscr{F}\left(\sigma_{2},\sigma_{0}\right)z^{n}=\left(\mathcal{U}\left(z\right)\right)^{2}$.
Since each particle starting in $\sigma_{2}$ must split through either
$\sigma_{1}$ or $\overline{\sigma_{1}}$ in order to reach $\sigma_{0}$
we can rewrite $\mathscr{F}_{n}\left(\sigma_{2},\sigma_{0}\right)$
as a sum according to the first ``visit'' to one of these cells.
This gives
\begin{align*}
 & \sum_{n=1}^{\infty}\mathscr{F}_{n}\left(\sigma_{2},\sigma_{0}\right)z^{n}=\sum_{n=0}^{\infty}E^{\sigma_{2}}\left[F_{n}\left(\sigma_{0}\right)\right]z^{n}\\
= & \sum_{n=0}^{\infty}\sum_{k=0}^{n}\left[E^{\sigma_{2}}\left[K_{k}\left(\sigma_{1}\right)\right]E^{\sigma_{1}}\left[F_{n-k}\left(\sigma_{0}\right)\right]+E^{\sigma_{2}}\left[K_{k}\left(\overline{\sigma_{1}}\right)\right]E^{\overline{\sigma_{1}}}\left[F_{n-k}\left(\sigma_{0}\right)\right]\right]z^{n}\\
\overset{_{\left(\star\right)}}{=} & \sum_{n=0}^{\infty}\sum_{k=0}^{n}\left[E^{\sigma_{2}}\left[K_{k}\left(\sigma_{1}\right)\right]E^{\sigma_{1}}\left[F_{n-k}\left(\sigma_{0}\right)\right]-E^{\sigma_{2}}\left[K_{k}\left(\overline{\sigma_{1}}\right)\right]E^{\sigma_{1}}\left[F_{n-k}\left(\sigma_{0}\right)\right]\right]z^{n}\\
= & \sum_{n=0}^{\infty}\sum_{k=0}^{n}E^{\sigma_{2}}\left[F_{k}\left(\sigma_{1}\right)\right]E^{\sigma_{1}}\left[F_{n-k}\left(\sigma_{2}\right)\right]z^{n}=\sum_{n=0}^{\infty}\sum_{k=0}^{n}\mathscr{F}_{k}\left(\sigma_{2},\sigma_{1}\right)z^{k}\cdot\mathscr{F}_{n-k}\left(\sigma_{1},\sigma_{0}\right)z^{n-k}\\
= & \left(\mathcal{U}\left(z\right)\right)^{2},
\end{align*}
where for $\left(\star\right)$ we used the fact that $E^{\overline{\sigma}}\left[K_{k}\left(\sigma'\right)\right]=E^{\sigma}\left[K_{k}\left(\overline{\sigma'}\right)\right]$
for every $k\geq0$ and $\sigma,\sigma'\in\left(T_{k}^{d}\right)_{\pm}^{d-1}$.

\end{enumerate}
\end{proof}
\medskip{}

Using Lemma \ref{lem:Relation_between_generation_functions_for_arboreal_complexes}
we can now find $\mathcal{G}\left(z\right).$ For simplicity fix $p=0$
and note that in this case Lemma \ref{lem:Relation_between_generation_functions_for_arboreal_complexes}(2)
gives $\mathcal{U}\left(z\right)=\frac{1}{k}z-\frac{d-1}{k}z\mathcal{U}\left(z\right)+\frac{k-1}{k}dz\left(\mathcal{U}\left(z\right)\right)^{2}$.
The solutions of the equation are 

\[
L_{\pm}\left(z\right)=\frac{\left(d-1\right)z+k\pm\sqrt{\left(\left(d-1\right)z+k\right)^{2}-4\left(k-1\right)dz^{2}}}{2\left(k-1\right)dz},
\]
and since only the solution $L_{-}$ satisfies $L_{-}\left(0\right)=0=\mathcal{U}\left(0\right)$
we conclude that $\mathcal{U}=L_{-}$. Using Lemma \ref{lem:Relation_between_generation_functions_for_arboreal_complexes}(1)
and Proposition \ref{thm:R_via_S_and_overline=00007BS=00007D} it
follows that as long as $\mathfrak{F}\left(z\right)\neq1$
\[
\mathcal{G}\left(z\right)=\frac{1}{1-\mathfrak{F}\left(z\right)}=\frac{1}{1-dz\mathcal{U}\left(z\right)}
\]
which gives 
\[
\mathcal{G}\left(z\right)=\frac{2\left(k-1\right)}{k-2-\left(d-1\right)z+\sqrt{\left(\left(d-1\right)z+k\right)^{2}-4\left(k-1\right)dz^{2}}}.
\]
Note that the singularity points of $\mathcal{G}$ are the points
where the denominator is zero (which are in fact the points where
$\mathfrak{F}\left(z\right)=1$) and the points where the square-root
is zero. Those are given by 
\[
z=1,\,\mbox{when }k\leq d+1
\]
 and 
\[
z_{\pm}=\frac{k}{1-d\mp2\sqrt{\left(k-1\right)d}}.
\]
In particular we infer that 
\[
\left(\limsup_{n\to\infty}\sqrt[n]{\left|\mathscr{E}_{n}^{0}\left(\sigma_{0},\sigma_{0}\right)\right|}\right)^{-1}=\left(\begin{array}{c}
\mbox{radius of }\\
\mbox{convergence of }\mathcal{G}
\end{array}\right)=\begin{cases}
\min\left\{ 1,\left|\frac{k}{d-1+2\sqrt{\left(k-1\right)d}}\right|\right\}  & k\leq d+1\\
\frac{k}{d-1+2\sqrt{\left(k-1\right)d}} & k>d+1
\end{cases}.
\]

\subsection{Finding the spectral measure \label{sub:Finding-the-spectral-measure}}

Once the moment generating function $\mathcal{G}\left(z\right)$ is
known the spectral measure can be calculated using the Stieltjes transform.
Let $\mu_{d,k}$ be the spectral measure associated with the operator
$\mathscr{A}_{0}$ of the arboreal complex $T_{k}^{d}$ and for $z\in\mathbb{C}\backslash\mathbb{R}$
let $\mathcal{S}\left(z\right)=\int_{\mathbb{R}}\frac{1}{x-z}d\mu_{d,k}\left(x\right)$
be its Stieltjes transform. Note that for $z\in\mathbb{C}\backslash\mathbb{R}$
whose absolute value is bigger than $\max\left\{ \left|\lambda\right|\,:\,\lambda\in\mbox{support of}\,\mu_{d,k}\right\} $
\begin{align*}
\mathcal{S}\left(z\right) & =\int_{\mathbb{R}}\frac{1}{x-z}d\mu_{d,k}\left(x\right)=-\frac{1}{z}\int_{\mathbb{R}}\frac{1}{1-\frac{x}{z}}d\mu_{d,k}\left(x\right)\\
 & =-\frac{1}{z}\int_{\mathbb{R}}\sum_{n=0}^{\infty}\left(\frac{x}{z}\right)^{n}d\mu_{d,k}\left(x\right)=-\frac{1}{z}\sum_{n=0}^{\infty}\mathscr{E}_{n}\left(\sigma_{0},\sigma_{0}\right)\frac{1}{z^{n}}=-\frac{1}{z}\mathcal{G}\left(\frac{1}{z}\right).
\end{align*}
Since $\mathcal{S}\left(z\right)$ and $-\frac{1}{z}\mathcal{G}\left(\frac{1}{z}\right)$
agree on an open ball it follows that their analytic continuations%
\footnote{One can avoid the use of analytic continuation by working with the
$p$-lazy SBRW for any $p>\frac{d-1}{d+1}$. %
} agree and in particular that

\[
\mathcal{S}\left(z\right)=-\frac{2\left(k-1\right)}{\left(k-2\right)z-\left(d-1\right)+\sqrt{\left(d-1+kz\right)^{2}-4\left(k-1\right)d}}.
\]
Having found the Stieltjes transform $\mathcal{S}$ we turn to evaluate
$\mu_{d,k}$ starting with the spectral density $\rho_{d,k}$, namely,
the Radon-Nikodym derivative of the absolutely continuous part. This
is done by evaluating the limit 
\[
\lim_{\varepsilon\downarrow0}\frac{1}{\pi}\int_{\mathbb{R}}\frac{\varepsilon}{\left(x-x_{0}\right)^{2}+\varepsilon^{2}}d\mu_{d,k}\left(x\right)
\]
which by the dominated convergence theorem equals $\rho_{d,k}\left(x_{0}\right)$
when $\mu_{d,k}$ doesn't have an atom in $x_{0}$ and $+\infty$
when it does. For every $x_{0}\in\mathbb{R}$ (except for $x_{0}=1$
when $k\leq d+1$)

\begin{align}
\lim_{\varepsilon\downarrow0}\frac{1}{\pi}\int_{\mathbb{R}}\frac{\varepsilon}{\left(x-x_{0}\right)^{2}+\varepsilon^{2}}d\mu_{d,k}\left(x\right) & =\lim_{\varepsilon\downarrow0}\frac{1}{\pi}\mathrm{Im}\left(\mathcal{S}\left(x_{0}+i\varepsilon\right)\right)\nonumber \\
 & =-\frac{1}{\pi}\mathrm{Im}\left(\frac{2\left(k-1\right)}{\left(k-2\right)x_{0}-\left(d-1\right)+\sqrt{\left(d-1+kx_{0}\right)^{2}-4\left(k-1\right)d}}\right).\label{eq:Stieltjes_trans_limit}
\end{align}
The right hand side of (\ref{eq:Stieltjes_trans_limit}) equals zero
whenever $\left(d-1+kx_{0}\right)^{2}\geq4\left(k-1\right)d$ and
\[
\frac{\sqrt{4\left(k-1\right)d-\left(d-1+kx\right)^{2}}}{2\pi\left(d+x\right)\left(1-x\right)}
\]
when $\left(d-1+kx_{0}\right)^{2}\leq4\left(k-1\right)d$. Since $\left(d-1+kx_{0}\right)^{2}\leq4\left(k-1\right)d$
exactly when 
\[
x_{0}\in I_{d,k}\equiv\left[\frac{1-d-2\sqrt{\left(k-1\right)d}}{k},\frac{1-d+2\sqrt{\left(k-1\right)d}}{k}\right],
\]
it follows that the density function is 
\[
\rho_{d,k}\left(x\right)=\frac{\sqrt{4\left(k-1\right)d-\left(d-1+kx\right)^{2}}}{2\pi\left(d+x\right)\left(1-x\right)}\chi_{x\in I_{d,k}}.
\]
One can now verify that 
\[
\int_{I_{d,k}}\rho_{d,k}\left(x\right)dx=\begin{cases}
\frac{k}{d+1} & \,\,,k<d+1\\
1 & \,\,,k\geq d+1
\end{cases},
\]
which suggest that the size of the atom in the unique suspect for
being one, i.e., $x_{0}=1$ when $k\leq d+1$, is $\frac{d+1-k}{d+1}$.
A direct proof of this fact without calculating the above integral
can also be given using $\mathcal{S}$. Define

\textbf{
\[
h\left(\varepsilon\right):=-i\int_{\mathbb{R}}\frac{\varepsilon}{x-1-i\varepsilon}d\mu_{d,k}\left(x\right)=-i\varepsilon\mathcal{S}\left(1+i\varepsilon\right).
\]
}By the dominated convergence theorem\textbf{ $\lim_{\varepsilon\downarrow0}h\left(\varepsilon\right)=\mu_{d,k}\left(\left\{ 1\right\} \right)$}
and therefore\textbf{ 
\begin{align*}
\mu_{k,d}\left(\left\{ 1\right\} \right) & =\lim_{\varepsilon\downarrow0}-i\varepsilon\mathcal{S}\left(1+i\varepsilon\right)=\begin{cases}
\frac{d+1-k}{d+1} & \quad,\, k\leq d+1\\
0 & \quad,\, k=d+1
\end{cases}.
\end{align*}
}This shows that 
\[
\mu_{d,k}\left(A\right)=\begin{cases}
\int_{A}\rho_{d,k}\left(x\right)dx+\frac{d+1-k}{d+1}\chi_{0\in A} & ,\, k<d+1\\
\int_{A}\rho_{d,k}\left(x\right)dx & ,\, k\geq d+1
\end{cases}
\]
and completes the proof. \hfill{}$\boxempty$

Let us take this opportunity to state a conjecture regarding the eigenvalue
$1$ in simplicial complexes. Theorem \ref{thm:The_spectral_measure_of_arboreal_complexes}
implies in particular that $1$ is an eigenvalue of $\mathscr{A}_{0}$
in $T_{k}^{d}$ as long as $k\leq d$. We conjecture that this holds
in a much bigger generality:
\begin{conjecture}
One is an eigenvalue of $\mathscr{A}_{0}$ for every $d$-complex
$X$ such that $\sup_{\sigma\in X^{d-1}}\deg\left(\sigma\right)\leq d$. 
\end{conjecture}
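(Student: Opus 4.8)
The plan is to rephrase the spectral statement in terms of cocycles and then handle the finite and infinite cases separately. With the weight $w_{\shortuparrow}$ the coboundary $\delta_{d}$ is bounded (Example \ref{exa:weight_functions}(2) shows that $w_{\shortuparrow}$ is $d$-good), so $\Delta^{+}=\partial_{d}\delta_{d}=\delta_{d}^{*}\delta_{d}$ and $\mathscr{A}_{0}=I-\Delta^{+}$ are bounded self-adjoint operators on $\Omega_{L^{2}}^{d-1}$. Consequently $1$ is an eigenvalue of $\mathscr{A}_{0}$ if and only if $0$ is an eigenvalue of $\Delta^{+}$, and since $\langle\Delta^{+}f,f\rangle=\|\delta_{d}f\|^{2}$ this occurs exactly when there is a nonzero $f\in\Omega_{L^{2}}^{d-1}$ with $\delta_{d}f=0$. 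The statement thus reduces to showing that, under $\sup_{\sigma}\deg(\sigma)\le d$, the space $Z^{d-1}\cap\Omega_{L^{2}}^{d-1}$ of $L^{2}$ $(d-1)$-cocycles is nontrivial.

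For a \emph{finite} complex this is a counting argument. Counting incidences between $d$-cells and their $d+1$ faces gives $\sum_{\sigma\in X^{d-1}}\deg(\sigma)=(d+1)|X^{d}|$, so the hypothesis $\deg(\sigma)\le d$ yields $|X^{d}|\le\frac{d}{d+1}|X^{d-1}|$. By the rank--nullity theorem, $\dim Z^{d-1}=\dim\ker\delta_{d}\ge|X^{d-1}|-\dim\Omega^{d}=|X^{d-1}|-|X^{d}|\ge\frac{1}{d+1}|X^{d-1}|>0$, where $X^{d-1}\neq\emptyset$ because $X$ is $d$-dimensional; hence a nonzero cocycle exists.

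For an \emph{infinite} complex the plan is an exhaustion argument. Fix a $(d-1)$-cell $\sigma_{0}$ and choose finite subcomplexes $X_{1}\subset X_{2}\subset\cdots$ with $\bigcup_{n}X_{n}=X$. On each $X_{n}$ the finite case supplies a unit cocycle $f_{n}$, which I extend by zero and view as an element of $\Omega_{L^{2}}^{d-1}(X)$. After passing to a subsequence, $f_{n}\rightharpoonup f$ weakly in $\Omega_{L^{2}}^{d-1}$. Since $\delta_{d}$ is bounded it is weak--weak continuous, and for each fixed $\tau\in X^{d}$ one has $\delta_{d}f_{n}(\tau)=0$ as soon as $\tau\in X_{n}^{d}$; testing against the Dirac form $\ind_{\tau}$ then gives $\delta_{d}f(\tau)=0$. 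Thus the weak limit $f$ is automatically an $L^{2}$ cocycle.

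The hard part is to guarantee that $f\neq 0$: a priori the mass of the $f_{n}$ may escape to infinity and the weak limit may vanish, which is precisely the distinction between $0$ lying in the spectrum of $\Delta^{+}$ and $0$ being a genuine eigenvalue. The quantitative finite bound $\dim Z^{d-1}(X_{n})\ge\frac{1}{d+1}|X_{n}^{d-1}|$ shows that a fixed positive proportion of each $\Omega^{d-1}(X_{n})$ consists of cocycles, which is the natural input for a positive-$L^{2}$-Betti-number conclusion; converting this density into an honest $L^{2}$ eigenfunction, however, seems to require either extra structure (a cocompact symmetry group, where L\"uck-type approximation would give a positive $(d-1)$-th $L^{2}$-Betti number and hence a harmonic $L^{2}$ cocycle) or a uniform combinatorial estimate keeping a definite fraction of each $f_{n}$ near $\sigma_{0}$. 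I expect this non-escape estimate for \emph{arbitrary} complexes to be the main obstacle---consistent with the statement being posed as a conjecture---and I would first test any candidate estimate against the complexes $T_{k}^{d}$, where Theorem \ref{thm:The_spectral_measure_of_arboreal_complexes} already exhibits the atom at $1$ for $k\le d$.
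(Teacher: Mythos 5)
This statement is posed in the paper as an open \emph{conjecture}: the paper offers no proof of it, only the supporting evidence of Theorem \ref{thm:The_spectral_measure_of_arboreal_complexes}, which exhibits the atom of $\mu_{d,k}$ at $1$ for $T_{k}^{d}$ with $k\leq d$. So the only question is whether your argument closes the conjecture, and it does not --- as you yourself acknowledge. Your reduction is correct: since $w_{\shortuparrow}$ is $d$-good, $\Delta^{+}=\partial_{d}\delta_{d}$ is bounded and self-adjoint on $\Omega_{L^{2}}^{d-1}$, and $1$ is an eigenvalue of $\mathscr{A}_{0}$ exactly when there is a nonzero $f\in\Omega_{L^{2}}^{d-1}$ with $\delta_{d}f=0$. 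Your counting argument for finite $X$ is also correct, but note that the finite case is degenerate for a simpler reason that uses no degree hypothesis at all: for any finite $d$-complex, $B^{d-1}=\mathrm{im}\,\delta_{d-1}$ is nonzero and lies in $\ker\delta_{d}$, so $1$ is \emph{always} an eigenvalue of $\mathscr{A}_{0}$. The same trivialization applies to any infinite complex possessing even one cell $\varrho\in X^{d-2}$ with $1\leq\deg\left(\varrho\right)<\infty$: then $\delta_{d-1}\ind_{\varrho}$ is a nonzero, finitely supported (hence $L^{2}$) cocycle --- this is exactly the construction the paper uses in Lemma \ref{lem:conditions_for_invertability}(4). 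Consequently the entire content of the conjecture sits in complexes, such as $T_{k}^{d}$, in which every $\left(d-2\right)$-cell has infinitely many cofaces, so that no finitely supported exact form is available and the hypothesis $\deg\left(\sigma\right)\leq d$ must genuinely enter.

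And it is precisely there that your exhaustion argument has the gap you flag. The weak-limit step itself is sound: for fixed $\tau\in X^{d}$ one has $\delta_{d}f_{n}\left(\tau\right)=0$ for all large $n$, and $\delta_{d}f_{n}\rightharpoonup\delta_{d}f$ because $\delta_{d}$ is bounded, so the weak limit $f$ is an $L^{2}$ cocycle. But nothing prevents $f=0$: the unit cocycles supplied by the finite-dimensional dimension count come with no localization, and their mass can escape to infinity. Worse, $f_{n}$ extended by zero is generally \emph{not} a cocycle of $X$ (a cell $\tau\in X^{d}\setminus X_{n}^{d}$ meeting $X_{n}^{d-1}$ sees uncancelled boundary terms), so without additional control near the boundary of $X_{n}$ your argument does not even establish the weaker statement $0\in\mathrm{Spec}\left(\Delta^{+}\right)$. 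Distinguishing ``$0$ lies in the spectrum'' from ``$0$ is an eigenvalue'' is exactly what the conjecture asks, and the missing non-escape (or positive $L^{2}$-Betti-number) estimate is the whole difficulty. In summary: correct reduction, correct but essentially vacuous finite case, and the substantive case left open --- an honest partial analysis consistent with the statement's status as a conjecture, not a proof of it.
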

A weak version of the conjecture is:
\begin{conjecture}[Weaker version]
 One is an eigenvalue of $\mathscr{A}_{0}$ for every arboreal $d$-complex
$X$ such that $\sup_{\sigma\in X^{d-1}}\deg\left(\sigma\right)\leq d$. 
\end{conjecture}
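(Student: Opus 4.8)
To show that $1$ is an eigenvalue of $\mathscr{A}_0$ for an arboreal $d$-complex with $\deg\le d$, the plan is to translate the eigenvalue statement into one about closed $L^2$ forms and then exploit the degree bound. Since $\mathscr{A}_0=I-\Delta^+$ by Lemma \ref{lem:comparison_of_models}, the value $1$ is an eigenvalue of $\mathscr{A}_0$ exactly when $0$ is an eigenvalue of $\Delta^+=\Delta_{d-1}^+=\partial_d\delta_d$, i.e. when some nonzero $f\in\Omega_{L^2}^{d-1}$ satisfies $\Delta^+f=0$. Because $w_\uparrow$ is always $d$-good (Example \ref{exa:weight_functions}), $\delta_d$ is bounded, $\partial_d=\delta_d^*$, and $\langle\Delta^+f,f\rangle=\|\delta_d f\|^2$, so $\Delta^+f=0$ is equivalent to $\delta_d f=0$. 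Thus the whole problem reduces to producing a nonzero closed $L^2$ $(d-1)$-form, that is, to showing $\ker\delta_d\cap\Omega_{L^2}^{d-1}\neq0$.

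For a \emph{finite} complex this is immediate and in fact yields the stronger conjecture as well, with no need for the arboreal hypothesis. Double counting incidences between $d$-cells and their $(d-1)$-faces gives $(d+1)|X^d|=\sum_{\sigma\in X^{d-1}}\deg(\sigma)\le d\,|X^{d-1}|$, hence $|X^d|\le\tfrac{d}{d+1}|X^{d-1}|<|X^{d-1}|$. By rank-nullity $\dim\ker\delta_d=|X^{d-1}|-\mathrm{rank}\,\delta_d\ge|X^{d-1}|-|X^d|\ge\tfrac{1}{d+1}|X^{d-1}|>0$, and the Hodge theorem identifies this kernel with $\ker\Delta^+$. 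The entire content of the conjecture therefore lies in the \emph{infinite} case, where this count only shows that the space of algebraic cocycles is large, not that any of them lies in $L^2$.

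For an infinite arboreal complex I would attack the $L^2$ problem spectrally, through the first-return machinery of Section \ref{sec:The--branching-random-walk}. Fix a base $(d-1)$-cell $\sigma$ and let $\mu_\sigma$ be the spectral measure of $\mathscr{A}_0$ with respect to $\ind_\sigma$; an atom of $\mu_\sigma$ at the point $1$ produces a genuine $L^2$ eigenfunction, namely the spectral projection $P_{\{1\}}\ind_\sigma$ --- this is precisely how the eigenvalue was exhibited for $T_k^d$, $k\le d$, in Theorem \ref{thm:The_spectral_measure_of_arboreal_complexes}. By Theorem \ref{thm:R_via_S_and_overline=00007BS=00007D} the return generating function factors as $\mathcal{G}(z)=(1-\mathfrak{F}(z))^{-1}$, so such an atom occurs exactly when $\mathcal{G}$ has a pole at $z=1$, that is, when $\mathfrak{F}(1)=1$ and $z=1$ is a pole rather than a branch point. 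The plan is then to repeat the one-step analysis of Lemma \ref{lem:Relation_between_generation_functions_for_arboreal_complexes} without regularity: the absence of simple $d$-loops still lets one expand $\mathfrak{F}$ and the first-passage generating functions $\mathcal{U}_{\sigma'}$ through a recursive, tree-indexed family of quadratic relations, and one would try to show that the uniform bound $\deg\le d$ forces $\mathfrak{F}(1)$, the total net expected number of first returns, to equal $1$.

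The main obstacle is exactly the loss of transitivity. For $T_k^d$ all first-passage functions coincide, collapsing the recursion to the single quadratic equation of Lemma \ref{lem:Relation_between_generation_functions_for_arboreal_complexes}(2), solvable in closed form; for a general arboreal complex one instead faces an infinite coupled system of quadratic equations with no explicit solution. Moreover, even granting $\mathfrak{F}(1)=1$ (the high-dimensional recurrence statement, cf.\ Corollary \ref{cor:Transience_and_recurrence_of_T^d_k}), one must still separate the \emph{pole} regime ($\deg\le d$, atom and hence eigenvalue) from the \emph{branch-point} regime ($\deg=d+1$, edge of continuous spectrum with no atom); it is this regularity analysis at $z=1$ that should consume the strict inequality $d<d+1$. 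Finally, the familiar F\o lner/exhaustion shortcut --- extracting an $L^2$ cocycle as a weak limit of boundary-vanishing finite cocycles --- is unavailable, since arboreal complexes grow exponentially and a ball has boundary comparable to its volume; indeed one checks that $T_k^d$ carries no nonzero finitely supported cocycle, so any proof must produce a delocalized $L^2$ eigenfunction directly.
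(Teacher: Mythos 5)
First, a point of calibration: the paper does not prove this statement at all; it appears there as an open conjecture, verified only for the regular arboreal complexes $T_{k}^{d}$, $k\leq d$, through the explicit spectral measure of Theorem \ref{thm:The_spectral_measure_of_arboreal_complexes}. So the only question is whether your argument closes it, and it does not. What you do prove is correct: the reduction of the eigenvalue claim to exhibiting a nonzero $f\in\Omega_{L^{2}}^{d-1}$ with $\delta_{d}f=0$ (legitimate, since $w_{\shortuparrow}$ is $d$-good, hence $\partial_{d}=\delta_{d}^{*}$ and $\left\langle \Delta^{+}f,f\right\rangle =\left\Vert \delta_{d}f\right\Vert ^{2}$), and the finite case, where the incidence count $\left(d+1\right)\left|X^{d}\right|=\sum_{\sigma\in X^{d-1}}\deg\left(\sigma\right)\leq d\left|X^{d-1}\right|$ together with rank-nullity forces $\ker\delta_{d}\neq0$. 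This settles both the weak and the strong conjecture for \emph{finite} complexes, with no arboreal hypothesis needed, and is a worthwhile observation that the paper does not record.

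However, the substance of the conjecture is the infinite case ($T_{k}^{d}$ is infinite for every $k\geq2$, and these are the motivating examples), and there your text is a program, not a proof --- as you yourself acknowledge. Two steps are missing and neither is routine. (i) You never establish $\mathfrak{F}\left(1\right)=1$ for a general, non-transitive arboreal complex of degree at most $d$: without transitivity, the one-step expansion of Lemma \ref{lem:Relation_between_generation_functions_for_arboreal_complexes} produces an infinite coupled family of first-passage series, one per oriented cell, tied together by quadratic relations, and you give no mechanism by which the hypothesis $\deg\leq d$ controls this system. (ii) Even granting $\mathfrak{F}\left(1\right)=1$, you must show that $z=1$ is a pole of $\mathcal{G}$ rather than a branch point, and then convert that pole into an atom of the spectral measure at $1$ via a Stieltjes boundary-value limit; the paper accomplishes both only because $T_{k}^{d}$ admits a closed-form $\mathcal{G}$, and the dichotomy you invoke (pole when $\deg\leq d$, branch point when $\deg=d+1$) is, for general arboreal complexes, precisely the assertion to be proved rather than a tool. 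Note also that an atom of $\mu_{\sigma}$ at $1$ for your chosen base cell is sufficient but not necessary for $1$ to be an eigenvalue, so the single-cell generating-function approach may need to be run over all base cells. As it stands, your proposal is a correct reduction plus a plausible research plan consistent with the paper's evidence; the statement itself remains open.
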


\subsection{Transience and recurrence of ESBRW on regular arboreal complexes\label{sub:Transience-and-recurrence_on_arboreal_complexes}}

The notion of transient $\left(d-1\right)$-walk was defined in \cite[Subsection 3.8]{PR12}.
A slightly more general analogue for the ESBRW is:
\begin{defn}
\label{def:Transience_and_recurrence}The ESBRW is called transient
if $\sum_{n=0}^{\infty}\mathscr{E}_{n}^{p}\left(\sigma,\sigma\right)<\infty$
for every $\sigma\in X^{d-1}$ and some $\frac{d-1}{d+1}<p<1$. If
$\sum_{n=0}^{\infty}\mathscr{E}_{n}^{p}\left(\sigma,\sigma\right)<\infty$
for some $\sigma\in X^{d-1}$ and $\frac{d-1}{d+1}<p<1$ the random
walk is called recurrent.
\end{defn}
Let $X$ be a $d$-complex and denote by $\mu^{p}$ the spectral measure
of $\mathscr{A}_{p}$ associated with the function $\ind_{\sigma}$.
Since $\mathscr{A}_{p}=pI+\left(1-p\right)\mathscr{A}_{0}$ , it follows
that $\int_{\mathbb{R}}f\left(x\right)d\mu^{p}\left(x\right)=\int_{\mathbb{R}}f\left(p+\left(1-p\right)x\right)d\mu^{0}\left(x\right)$
for every integrable function $f:\mathbb{R}\to\mathbb{R}$. In addition,
since $\mathrm{Support}\left(\mu^{p}\right)\subset\mathrm{Spec}\left(\mathscr{A}_{p}\right)\subset\left[1-\left(1-p\right)\left(d+1\right),1\right]$,
it follows that the support of the measure $\mu^{p}$ is contained
in $\left(-1,1\right]$ for every $\frac{d-1}{d+1}<p<1$. Therefore,
by the monotone convergence theorem and the relation between $\mu^{0}$
and $\mu^{p}$ 
\begin{align}
\sum_{n=0}^{\infty}\mathscr{E}_{n}^{p}\left(\sigma,\sigma\right) & =\sum_{n=0}^{\infty}\deg\left(\sigma\right)\cdot\left\langle \mathscr{A}_{p}^{n}\ind_{\sigma},\ind_{\sigma}\right\rangle =\deg\left(\sigma\right)\cdot\sum_{n=0}^{\infty}\int_{\mathbb{R}}x^{n}d\mu^{p}\left(x\right)\nonumber \\
 & =\deg\left(\sigma\right)\cdot\int_{\mathbb{R}}\frac{1}{1-x}d\mu^{p}\left(x\right)=\deg\left(\sigma\right)\cdot\int_{\mathbb{R}}\frac{1}{1-\left(p+\left(1-p\right)x\right)}d\mu^{0}\left(x\right)\nonumber \\
 & =\frac{1}{1-p}\cdot\int_{\mathbb{R}}\frac{1}{1-x}d\mu^{0}\left(x\right).\label{eq:tran_recu_1}
\end{align}
In particular the $p$-lazy branching random walk is recurrent/transient
for some $\frac{d-1}{d+1}<p<1$ if and only if it is recurrent/transient
for every such $p$.

\begin{proof}[Proof of Corollary \ref{cor:Transience_and_recurrence_of_T^d_k}]
 By the above argument, in order to check recurrence/transience of
the ESBRW it suffices to check whether the integral $\int_{\mathbb{R}}\frac{1}{1-x}d\mu_{d,k}\left(x\right)$
is infinite/finite respectively. When $k\leq d$, $1$ is an atom
of the measure $\mu_{d,k}$ and therefore the integral is infinite.
If $k>d+1$ the spectrum of $\mu_{d,k}$ is a compact subset of $\left(-\infty,1\right)$
and therefore the integral is finite. Finally, in the case $k=d+1$
\begin{align*}
\int_{\mathbb{R}}\frac{1}{1-x}d\mu_{d,k}\left(x\right) & =\int_{I_{d,k}}\frac{1}{1-x}\cdot\frac{\sqrt{4d^{2}-\left(\left(d+1\right)x+\left(d-1\right)\right)^{2}}}{2\pi\left(d+x\right)\left(1-x\right)}dx\\
 & =\int_{\frac{1-3d}{d+1}}^{1}\frac{\sqrt{\left(d+1\right)\left(\left(d+1\right)x+\left(3d-1\right)\right)}}{2\pi\left(d+x\right)}\frac{1}{\left(1-x\right)^{\frac{3}{2}}}dx=\infty,
\end{align*}
which implies that the ESBRW on $T_{d+1}^{d}$ is recurrent.
\end{proof}

\section{Dirichlet problem on simplicial complexes\label{sec:Dirichlet-problem}}

Dirichlet problem concerns with finding a function that solves a partial
differential equation (PDE) with prescribed boundary values. The PDE
which is usually under consideration is Laplace's equation. 

In the discrete setting of graphs Dirichlet problem is stated as follows:\\
\\
\fbox{\begin{minipage}[t]{1\columnwidth}%
\textbf{Discrete Dirichlet problem:} Given a finite graph $G=\left(V,E\right)$,
a non-empty subset $A\subset V$ and a function $f:A\to\mathbb{R}$
find a solution $F:V\to\mathbb{R}$ to the boundary value problem
\[
\begin{cases}
\Delta^{+}F\left(x\right) & ,\forall x\in V\backslash A\\
F\left(x\right)=f\left(x\right) & ,\forall x\in A
\end{cases}.
\]
\end{minipage}}\\

If $G$ is a connected graph, then for every non-empty set $A\subset V$
and $f:A\to\mathbb{R}$ there exists a solution given by $F\left(x\right)=E^{x}\left[f\left(Y_{\tau_{A}}\right)\right],$
where $\left(Y_{n}\right)_{n\geq0}$ is the simple random walk on
the graph $G$ and $\tau_{A}=\inf\left\{ k\geq0\,:\, Y_{k}\in A\right\} $.
In addition, the solution is unique due to the maximum principle.

\medskip{}

A high-dimensional counterpart of the problem for forms is:\\
\\
\fbox{\begin{minipage}[t]{1\columnwidth}%
\textbf{High-dimensional discrete Dirichlet problem: }Given a finite
$d$-complex $X$, a non-empty subset $A\subset X^{d-1}$ and a form
$f:A_{\pm}\to\mathbb{R}$ (where $A_{\pm}$ is the set of oriented
$\left(d-1\right)$-cells whose unoriented version is in $A$) find
a solution $F\in\Omega^{d-1}$ to the boundary value problem
\[
\begin{cases}
\Delta^{+}F\left(\sigma\right) & ,\forall\sigma\in\left(X^{d-1}\backslash A\right)_{\pm}\\
F\left(\sigma\right)=f\left(\sigma\right) & ,\forall\sigma\in A_{\pm}
\end{cases}.
\]
\end{minipage}}\\

The situation in high dimensions is more involved and for a general
set $A$ one can have infinitely many solutions. For example if $X$
is composed of a single triangle $t=\left\{ v_{0},v_{1},v_{2}\right\} $,
$A=\left\{ \left\{ v_{0},v_{1}\right\} \right\} $ and $f\left(\left[v_{0},v_{1}\right]\right)=-f\left(\left[v_{1},v_{0}\right]\right)=1$,
the form defined by 
\[
F_{\alpha}\left(e\right)=\begin{cases}
1 & \quad,\, e=\left[v_{0},v_{1}\right]\\
\alpha & \quad,\, e=\left[v_{1},v_{2}\right]\\
-1-\alpha & \quad,\, e=\left[v_{2},v_{0}\right]
\end{cases}
\]
is a solution to the Dirichlet problem for every $\alpha\in\mathbb{R}$. 

Before turning to discuss the existence and uniqueness of solutions
to the high-dimensional Dirichlet problem some additional definitions
are required. Let $X$ be a $d$-complex and $\emptyset\neq A\subset X^{d-1}$.
Since the case $A=X^{d-1}$ is degenerate and has exactly one solution,
$F=f$, we assume without loss of generality that $A\neq X^{d-1}$. 

Consider $\Delta^{+}$ as a matrix and denote by $\Delta_{X\backslash A}^{+}$
its restriction to rows and columns of $\left(d-1\right)$-cells in
$X^{d-1}\backslash A$. Similarly let $\delta_{d}^{X\backslash A}$
be the restriction of $\delta_{d}$ to $\left(d-1\right)$-cells in
$X^{d-1}\backslash A$. 

Define the $A$-absorbing, $p$-lazy SBRW on $X$ to be the usual
SBRW except that any particle in $A_{\pm}$ stays put with probability
one. Let $P_{A}$, $E_{A}$ denote the probability and expectation
of the $A$-absorbing SBRW respectively. We can now define the related
effective process $\left(D_{n}\right)_{n\geq0}$ and its Green function
\[
\mathcal{G}_{A}^{p}\left(\sigma,\sigma'\right)=\sum_{n=0}^{\infty}E_{A}^{\sigma}\left[D_{n}\left(\sigma\right)\right],\quad\forall\sigma,\sigma'\in\left(X^{d-1}\backslash A\right)_{\pm}.
\]

Our goal is to prove the following Theorem:
\begin{thm}[Solution to the high-dimensional Dirichlet problem]
\label{Theorem:Dirichlet_probem}Let $X$ be a finite $d$-complex,
$\frac{d-1}{d+1}<p<1$, $\emptyset\neq A\subsetneq X^{d-1}$ such
that $\Delta_{X\backslash A}^{+}$ is invertible and $f:A_{\pm}\to\mathbb{R}$.
Then the unique solution to the Dirichlet problem related to the triplet
$\left(X,A,f\right)$ is the function $F:X_{\pm}^{d-1}\to\mathbb{R}$
given by 
\[
F\left(\sigma\right)=\frac{1}{1-p}\sum_{\sigma''\in A_{\pm}}\left(\sum_{\tiny{\begin{array}{c}
\sigma'\in\left(X^{d-1}\backslash A\right)_{\pm}\\
\sigma'\sim\sigma''
\end{array}}}\frac{\mathcal{G}_{A}^{p}\left(\sigma,\sigma'\right)}{\deg\left(\sigma'\right)}\right)f\left(\sigma''\right).
\]
\end{thm}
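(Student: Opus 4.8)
The plan is to convert the probabilistic statement into a single resolvent identity and then read off both existence/uniqueness and the explicit formula from linear algebra. First I would dispose of existence and uniqueness at the deterministic level. Decomposing $X_{\pm}^{d-1}=A_{\pm}\uplus\left(X^{d-1}\setminus A\right)_{\pm}$ and writing $\Delta^{+}$ in the corresponding block form, the two requirements $\Delta^{+}F=0$ on $\left(X^{d-1}\setminus A\right)_{\pm}$ and $F|_{A_{\pm}}=f$ are equivalent to the single linear system $\Delta_{X\setminus A}^{+}\bigl(F|_{\left(X\setminus A\right)_{\pm}}\bigr)=-\Delta_{X\setminus A,A}^{+}f$, where $\Delta_{X\setminus A,A}^{+}$ denotes the interior--boundary block of $\Delta^{+}$. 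Since $\Delta_{X\setminus A}^{+}$ is assumed invertible, this system has the unique solution $F|_{\left(X\setminus A\right)_{\pm}}=-\bigl(\Delta_{X\setminus A}^{+}\bigr)^{-1}\Delta_{X\setminus A,A}^{+}f$, which settles both existence and uniqueness simultaneously; it then remains only to recognize this expression as the one displayed in the statement.

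The heart of the matter is to identify the Green function $\mathcal{G}_{A}^{p}$ with a resolvent. Running the one-step Markov computation of Lemma~\ref{lem:comparison_of_models}(1) for the $A$-absorbing walk, and using that a particle absorbed in $A_{\pm}$ stays there forever and hence never again contributes to any interior cell, one sees that the absorbing heat kernel $\mathscr{E}_{n}^{A}\left(\sigma,\sigma'\right)=E_{A}^{\sigma}\bigl[D_{n}\left(\sigma'\right)\bigr]$, restricted to $\sigma,\sigma'\in\left(X^{d-1}\setminus A\right)_{\pm}$, evolves by $\mathscr{E}_{n}^{A}\left(\cdot,\sigma'\right)=\mathscr{A}_{p}^{X\setminus A}\mathscr{E}_{n-1}^{A}\left(\cdot,\sigma'\right)$, where $\mathscr{A}_{p}^{X\setminus A}$ is the compression of $\mathscr{A}_{p}=I-\left(1-p\right)\Delta^{+}$ to the interior cells, equivalently $\mathscr{A}_{p}^{X\setminus A}=I-\left(1-p\right)\Delta_{X\setminus A}^{+}$. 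Summing the resulting geometric series gives $\mathcal{G}_{A}^{p}=\sum_{n\geq0}\bigl(\mathscr{A}_{p}^{X\setminus A}\bigr)^{n}=\bigl(I-\mathscr{A}_{p}^{X\setminus A}\bigr)^{-1}=\tfrac{1}{1-p}\bigl(\Delta_{X\setminus A}^{+}\bigr)^{-1}$.

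Two points then remain. For convergence of the series defining $\mathcal{G}_{A}^{p}$ I would check that $\mathscr{A}_{p}^{X\setminus A}$ has spectral radius strictly below $1$. Since $\Delta_{X\setminus A}^{+}=\bigl(\delta_{d}^{X\setminus A}\bigr)^{*}\delta_{d}^{X\setminus A}$ is positive semidefinite, and as a compression of $\Delta^{+}$ its spectrum lies in $\left[0,d+1\right]$ (the upper bound following from \eqref{eq:Up_Laplacian_that_we_use} via a Gershgorin estimate), the invertibility hypothesis gives $\mathrm{Spec}\bigl(\Delta_{X\setminus A}^{+}\bigr)\subseteq\left(0,d+1\right]$, whence $\mathrm{Spec}\bigl(\mathscr{A}_{p}^{X\setminus A}\bigr)\subseteq\bigl[1-\left(1-p\right)\left(d+1\right),1\bigr)$; this lies inside $\left(-1,1\right)$ exactly when $p>\tfrac{d-1}{d+1}$, which is precisely the standing hypothesis. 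Finally, to obtain the displayed formula I would compute the interior--boundary block explicitly from \eqref{eq:Up_Laplacian_that_we_use}, namely $\bigl(\Delta_{X\setminus A,A}^{+}f\bigr)\left(\sigma'\right)=-\tfrac{1}{\deg\left(\sigma'\right)}\sum_{\sigma''\sim\sigma',\,\sigma''\in A_{\pm}}f\left(\sigma''\right)$, substitute it together with the resolvent identity $\bigl(\Delta_{X\setminus A}^{+}\bigr)^{-1}=\left(1-p\right)\mathcal{G}_{A}^{p}$ into $F=-\bigl(\Delta_{X\setminus A}^{+}\bigr)^{-1}\Delta_{X\setminus A,A}^{+}f$, and interchange the two finite sums so that the summation over $\sigma''\in A_{\pm}$ becomes outermost, arriving at the stated expression.

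I expect the second step to be the main obstacle: making the bookkeeping fully rigorous that the absorbing dynamics, restricted to the interior cells and passed through the effective (signed) process, is governed exactly by the compression of $\mathscr{A}_{p}$. This requires tracking the orientation conventions hidden in the neighboring relation $\sim$ and verifying that absorbed particles make no net contribution to interior effective counts, so that the off-diagonal interior entries of the absorbing propagator coincide with those of $\mathscr{A}_{p}$ and the rows and columns indexed by $A_{\pm}$ may simply be deleted.
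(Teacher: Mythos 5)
Your proposal is correct and follows essentially the same route as the paper's proof: the same block decomposition of $\Delta^{+}$ reducing existence and uniqueness to invertibility of $\Delta_{X\backslash A}^{+}$, the same identification $\frac{1}{1-p}\mathcal{G}_{A}^{p}=\bigl(\Delta_{X\backslash A}^{+}\bigr)^{-1}$ via the compressed operator $\mathscr{A}_{p}^{X\backslash A}=I-\left(1-p\right)\Delta_{X\backslash A}^{+}$ with spectral radius below $1$ precisely when $p>\frac{d-1}{d+1}$, and the same explicit computation of the interior--boundary block to recover the displayed formula. The only deviations are cosmetic: you sum the geometric series directly and justify the spectral bound by positive semidefiniteness plus a Gershgorin estimate, whereas the paper derives the resolvent identity from the one-step Markov relation $\mathcal{G}_{A}^{p}=I+\mathscr{A}_{p}^{X\backslash A}\mathcal{G}_{A}^{p}$ and cites \cite{PR12} for the bound $\mathrm{Spec}\left(\Delta_{X\backslash A}^{+}\right)\subset\left[0,d+1\right]$; the absorption bookkeeping you flag as the main obstacle is exactly what the paper handles by noting that absorbed particles never return to interior cells, so the interior rows and columns may simply be compressed.
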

\begin{proof}
Decompose%
\footnote{The decomposition here is according to whether the $\left(d-1\right)$-cells
are in $A$ or in $\left(X^{d-1}\backslash A\right)$.%
} the matrix representation of $\Delta^{+}$ as $\Delta^{+}=\left(\begin{array}{c|c}
\Delta_{A}^{+} & -R\\
\hline -Q & \Delta_{X\backslash A}^{+}
\end{array}\right)$. Then $F$ is a solution to the Dirichlet problem if and only if
\[
\left(\begin{array}{c|c}
I & 0\\
\hline -Q & \Delta_{X\backslash A}^{+}
\end{array}\right)\left(\begin{array}{c}
\vdots\\
F\\
\vdots
\end{array}\right)=\left(\begin{array}{c}
f\\
0
\end{array}\right).
\]
Note that this operator is invertible if and only if the operator
$\Delta_{X\backslash A}^{+}$ is invertible, and since the invertibility
of $\Delta_{X\backslash A}^{+}$ was assumed it follows that there
exists a unique solution to Dirichlet problem given by $F|_{X^{d-1}\backslash A}=\left(\Delta_{X\backslash A}^{+}\right)^{-1}Qf$.
Next we show that whenever $\Delta_{X\backslash A}^{+}$ is invertible
and $\frac{d-1}{d+1}<p<1$ its inverse is given by $\frac{1}{1-p}\mathcal{G}_{A}^{p}$
and in particular that $\mathcal{G}_{A}^{p}$ is well defined. Indeed,
by the same argument as in \cite[Proposition 2.7(2)]{PR12} the spectrum
of $\Delta_{X\backslash A}^{+}$ is always a subset of $\left[0,d+1\right]$
and due to the fact that $\Delta_{X\backslash A}^{+}$ is invertible
$\mathrm{Spec}\left(\Delta_{X\backslash A}^{+}\right)\subset\left[\lambda_{0},d+1\right]$
for some $\lambda_{0}>0$. Defining the operator $\mathscr{A}_{p}^{X\backslash A}:=I-\left(1-p\right)\Delta_{X\backslash A}^{+}$,
it follows that $\mathrm{Spec}\left(\mathscr{A}_{p}^{X\backslash A}\right)\subset\left[1-\left(1-p\right)\left(d+1\right),1-\left(1-p\right)\lambda_{0}\right]$.
Since $\frac{d-1}{d+1}<p<1$ this is a closed sub-interval of $\left(-1,1\right)$
and so $\left\Vert \mathscr{A}_{p}^{X\backslash A}\right\Vert =\sup\left\{ \left|\lambda\right|\,:\,\lambda\in\mathrm{Spec}\left(\mathscr{A}_{p}\right)\right\} <1$.
Noting that $\mathscr{A}_{p}^{X\backslash A}$ is the ``transition''
operator of the $A$-absorbing ESBRW for $\left(d-1\right)$-cells
in $X^{d-1}\backslash A$ it follows that $E_{A}^{\sigma}\left[D_{n}\left(\sigma'\right)\right]=\deg\left(\sigma'\right)\cdot\left\langle \left(\mathscr{A}_{p}^{X\backslash A}\right)^{n}\ind_{\sigma},\ind_{\sigma'}\right\rangle $
for every $\sigma,\sigma'\in\left(X^{d-1}\backslash A\right)_{\pm}$.
Thus we conclude that 
\begin{align*}
\left|\mathcal{G}_{A}^{p}\left(\sigma,\sigma'\right)\right| & =\left|\sum_{n=0}^{\infty}E_{A}^{\sigma}\left[D_{n}\left(\sigma'\right)\right]\right|=\deg\left(\sigma'\right)\left|\sum_{n=0}^{\infty}\left\langle \left(\mathscr{A}_{p}^{X\backslash A}\right)^{n}\ind_{\sigma},\ind_{\sigma'}\right\rangle \right|\\
 & \leq\deg\left(\sigma'\right)\sum_{n=0}^{\infty}\left\Vert \mathscr{A}_{p}^{X\backslash A}\right\Vert ^{n}<\infty\quad,\,\forall\sigma,\sigma'\in\left(X^{d-1}\backslash A\right)_{\pm},
\end{align*}
which in particular shows that $\mathcal{G}_{A}^{p}$ is well defined. 

The fact that $\frac{1}{1-p}\mathcal{G}_{A}^{p}=\left(\Delta_{X\backslash A}^{+}\right)^{-1}$
follows now from the Markov property. Indeed,  
\begin{align*}
\mathcal{G}_{A}^{p}\left(\sigma,\sigma'\right) & =\sum_{n=0}^{\infty}E_{A}^{\sigma}\left[D_{n}\left(\sigma'\right)\right]=\ind_{\sigma}\left(\sigma'\right)+\sum_{n=1}^{\infty}\sum_{\sigma''\in\left(X^{d-1}\backslash A\right)_{\pm}}E_{A}^{\sigma}\left[N_{1}\left(\sigma''\right)\right]E_{A}^{\sigma''}\left[D_{n-1}\left(\sigma'\right)\right]\\
 & =\ind_{\sigma}\left(\sigma'\right)+\left(\mathscr{A}_{p}^{X\backslash A}\mathcal{G}_{A}^{p}\right)\left(\sigma,\sigma'\right)
\end{align*}
which gives $\frac{1}{1-p}\mathcal{G}_{A}^{p}\Delta_{X\backslash A}^{+}=\mathcal{G}_{A}^{p}\left(I-\mathscr{A}_{p}^{X\backslash A}\right)=I$. 

Finally, note that $Q$ is nothing else than the restriction of $-\Delta^{+}=\frac{1}{1-p}\left(\mathscr{A}_{p}-I\right)$
to columns of $\left(d-1\right)$-cells in $A$ and rows of $\left(d-1\right)$-cells
in $X^{d-1}\backslash A$. Since there are no diagonal elements in
the restriction this is the same as the restriction of $\frac{1}{1-p}\mathscr{A}_{p}$
to the same rows and columns which for $\sigma'\in\left(X^{d-1}\backslash A\right)_{\pm}$
and $\sigma''\in A_{\pm}$ equals $\frac{1}{1-p}E^{\sigma'}\left[N_{1}\left(\sigma''\right)\right]=\begin{cases}
\frac{1}{\deg\left(\sigma'\right)} & ,\,\sigma''\sim\sigma'\\
0 & ,\,\mbox{otherwise}
\end{cases}$. 
\end{proof}
Before turning to the next section we wish to discuss the main condition
in Theorem \ref{Theorem:Dirichlet_probem}, namely, the invertibility
of $\Delta_{X\backslash A}^{+}$. We start with some simple observations:
\begin{claim}[Invertibility of $\Delta_{X\backslash A}^{+}$]
\label{claim:trivial_invertability_equivalent_conditions} Let $X$
be a finite $d$-complex and $\emptyset\neq A\subsetneq X^{d-1}$.
The following are equivalent:
\begin{enumerate}
\item $\Delta_{X\backslash A}^{+}$ is invertible. 
\item $\ker\delta_{d}^{X\backslash A}=\ker\Delta_{X\backslash A}^{+}$ is
trivial. 
\item For every form $f:\left(X^{d-1}\backslash A\right)_{\pm}\to\mathbb{R}$
which is not identically zero, the extension $\tilde{f}:X_{\pm}^{d-1}\to\mathbb{R}$
given by $\tilde{f}\left(\sigma\right)=\begin{cases}
f\left(\sigma\right) & \sigma\in\left(X^{d-1}\backslash A\right)_{\pm}\\
0 & \sigma\in A_{\pm}
\end{cases}$ is not in $\ker\delta_{d}=\ker\Delta^{+}=Z^{d-1}$.
\item The relative homology $H_{d}\left(X,A\right)$ (see \cite[Section 2.1]{Ha02}
for the definition) is trivial.
\end{enumerate}
\end{claim}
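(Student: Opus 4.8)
The plan is to route everything through the single linear map $\delta_d^{X\backslash A}$, treating (1)--(3) as essentially linear algebra and reserving the topology for the last equivalence. Throughout I would use that $X$ is finite, so $\partial_d=\delta_d^{*}$ for the weighted inner product and $\Delta^{+}=\partial_d\delta_d=\delta_d^{*}\delta_d$; the Hodge lemma then gives $\ker\Delta^{+}=\ker\delta_d=Z^{d-1}$, which is the identification already invoked in the statement of (3).

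For (1)$\Leftrightarrow$(2) the key is a factorization. Let $W=\{g\in\Omega^{d-1}:g|_{A_{\pm}}=0\}$ be the forms supported on $(X^{d-1}\backslash A)_{\pm}$, let $\iota\colon W\hookrightarrow\Omega^{d-1}$ be the inclusion and $P=\iota^{*}$ the orthogonal projection onto $W$ (orthogonal because $W$ is spanned by a subset of the orthogonal family $\{\ind_{\sigma}\}$). Since $\Delta^{+}_{X\backslash A}=P\Delta^{+}\iota$ and $\delta_d^{X\backslash A}=\delta_d\iota$ by definition of the restrictions, I would compute
\[
\Delta^{+}_{X\backslash A}=P\,\delta_d^{*}\delta_d\,\iota=(\delta_d\iota)^{*}(\delta_d\iota)=\bigl(\delta_d^{X\backslash A}\bigr)^{*}\delta_d^{X\backslash A}.
\]
Hence $\Delta^{+}_{X\backslash A}$ is self-adjoint and positive semidefinite on $W$, and from $\langle\Delta^{+}_{X\backslash A}f,f\rangle=\|\delta_d^{X\backslash A}f\|^{2}$ one reads off the asserted kernel equality $\ker\Delta^{+}_{X\backslash A}=\ker\delta_d^{X\backslash A}$. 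A self-adjoint operator on a finite-dimensional space is invertible exactly when its kernel is trivial, which closes (1)$\Leftrightarrow$(2).

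The equivalence (2)$\Leftrightarrow$(3) I expect to be a pure restatement: extension by zero $f\mapsto\tilde f$ is a linear isomorphism from forms on $(X^{d-1}\backslash A)_{\pm}$ onto $W$ with $\delta_d^{X\backslash A}f=\delta_d\tilde f$, so $f\in\ker\delta_d^{X\backslash A}$ iff $\tilde f\in\ker\delta_d=Z^{d-1}$, and triviality of the kernel is precisely condition (3). For (3)$\Leftrightarrow$(4) I would read the condition off the relative complex of the pair $(X,A)$: after adjoining the full lower skeleton to $A$, the relative cochain complex is concentrated in degrees $d-1$ and $d$, the relative $(d-1)$-cochains are exactly $W$, and the relative coboundary out of them is $\delta_d^{X\backslash A}$. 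Because there are no cells of dimension $>d$ and no relative cochains below degree $d-1$, the relevant relative cocycle space is $\ker\delta_d^{X\backslash A}$ with nothing to quotient, so (3) says this relative (co)homology vanishes; dually, $\delta_d^{X\backslash A}$ is injective iff the relative boundary $\partial_d\colon C_d(X)\to C_{d-1}(X)/\langle A\rangle$ is surjective. By the universal coefficient theorem over $\mathbb{R}$ together with the long exact sequence of the pair, this is equivalent to the triviality of the relative homology group $H_d(X,A)$ in (4).

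The hard part will be exactly this last identification: one must fix the relative complex so that it truncates in the right place (adjoining the $(d-2)$-skeleton so that $\ker\delta_d^{X\backslash A}$ is a genuine relative cocycle space rather than a subquotient) and track the degree shift incurred in passing between $\delta_d$ acting on $W$ and the homology group named in (4). By contrast, I expect the linear-algebraic steps (1)$\Leftrightarrow$(2)$\Leftrightarrow$(3) to be routine once the factorization $\Delta^{+}_{X\backslash A}=(\delta_d^{X\backslash A})^{*}\delta_d^{X\backslash A}$ is established.
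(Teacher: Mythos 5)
Your treatment of (1)$\Leftrightarrow$(2)$\Leftrightarrow$(3) is correct and essentially identical to the paper's: the paper proves these by writing the quadratic-form identity $\left\langle \Delta_{X\backslash A}^{+}f,f\right\rangle _{X\backslash A}=\left\langle \delta_{d}\tilde{f},\delta_{d}\tilde{f}\right\rangle $, which is precisely the content of your factorization $\Delta_{X\backslash A}^{+}=\bigl(\delta_{d}^{X\backslash A}\bigr)^{*}\delta_{d}^{X\backslash A}$.

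The equivalence with (4), however, you cannot close as written, and the obstruction is real. You correctly identify (3) with the vanishing of the relative cocycle space in degree $d-1$, i.e.\ with $H^{d-1}\left(X,X^{d-2}\cup A\right)=0$, and you correctly dualize: $\delta_{d}^{X\backslash A}$ is injective iff the relative boundary $\partial_{d}^{\mathrm{rel}}:C_{d}\left(X\right)\to C_{d-1}\left(X\right)/C_{d-1}\left(A\right)$ is surjective. But surjectivity of $\partial_{d}^{\mathrm{rel}}$ is the vanishing of $H_{d-1}\left(X,X^{d-2}\cup A\right)$ (once the $\left(d-2\right)$-skeleton is adjoined, every relative $\left(d-1\right)$-chain is a relative cycle), whereas $H_{d}\left(X,A\right)=\ker\partial_{d}^{\mathrm{rel}}$ (there being no $\left(d+1\right)$-cells), so $H_{d}\left(X,A\right)=0$ is equivalent to \emph{injectivity} of $\partial_{d}^{\mathrm{rel}}$, i.e.\ to \emph{surjectivity} of $\delta_{d}^{X\backslash A}$. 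The universal coefficient theorem over $\mathbb{R}$ identifies $H^{k}$ with the dual of $H_{k}$ in the \emph{same} degree, and the long exact sequence cannot move your degree-$\left(d-1\right)$ statement to degree $d$ either; injectivity and surjectivity of $\delta_{d}^{X\backslash A}$ are genuinely different conditions. Indeed, in the paper's own example (a single full triangle with $A=\left\{ \left\{ v_{0},v_{1}\right\} \right\} $) one has $H_{2}\left(X,A\right)=0$ by the long exact sequence of the pair, yet $\Delta_{X\backslash A}^{+}$ is a singular $2\times2$ matrix; conversely, for two triangles glued along an edge with $A$ the four outer edges, $\Delta_{X\backslash A}^{+}$ is invertible while $H_{2}\left(X,A\right)\cong\mathbb{R}$. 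So what your argument actually proves is that (1)--(3) are equivalent to $H_{d-1}\left(X,X^{d-2}\cup A\right)=0$; item (4) as literally stated, with the conventions of the cited reference, is not equivalent to the others. The paper's own proof of this step is the single sentence that it ``follows directly from the definition of the relative homology,'' which conceals exactly the injectivity/surjectivity swap your outline runs into.
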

Using the above equivalent definitions we can identify some cases
in which it is easier to check whether $\Delta_{X\backslash A}^{+}$
is invertible or not. Let us start with two definitions:
\begin{defn}
Let $X$ be a finite $d$-complex and $\emptyset\neq A\subsetneq X^{d-1}$.
The set $A$ is called \emph{exhaustive} for the complex $X$ if there
exists a finite sequence $A=A_{0}\subsetneq A_{1}\subsetneq A_{2}\subsetneq\ldots\subsetneq A_{N}=X^{d-1}$
such that for every $n\geq1$ and $\sigma\in A_{n},$ one can find
$\tau\in\mathrm{cf}\left(\sigma\right)$ for which $\mathrm{face}\left(\tau\right)\backslash\sigma\subset A_{n-1}$. 
\end{defn}

\begin{defn}
\cite[Definition 3.1]{DKM09} Let $X$ be a $d$-complex and $k\leq d$.
A $k$-dimensional \emph{simplicial spanning tree} ($k$-SST for short)
of $X$ is a $k$-dimensional subcomplex $Y\subset X$ such that $Y^{k-1}=X^{k-1}$,
$H_{k}\left(Y;\mathbb{Z}\right)=0$ and $\left|H_{k-1}\left(Y;\mathbb{Z}\right)\right|<\infty$,
where $H_{l}\left(Y;\mathbb{Z}\right)$ are the homology groups with
coefficients in $\mathbb{Z}$ (see \cite[Section 2.1]{Ha02} for the
definition). \end{defn}
\begin{lem}
\label{lem:conditions_for_invertability} Let $X$ be a finite $d$-complex. 
\begin{enumerate}
\item When $d=1$, $\Delta_{X\backslash A}^{+}$ is invertible if and only
if $A$ contains a vertex in each of the $0$-components of $X$. 
\item If $A$ is an exhaustive set for the complex $X$, then $\Delta_{X\backslash A}^{+}$
is invertible. 
\item If $A$ is a deformation retract of $X$ and $\Delta_{X\backslash A}^{+}$
is invertible then $H_{d}\left(X\right)=0$.
\item If there exists $\varrho\in X^{d-2}$ such that $\mathrm{cf}\left(\varrho\right)\subset X^{d-1}\backslash A$,
then $\Delta_{X\backslash A}^{+}$ is not invertible. 
\item If $\left|A\right|=\left|X^{d}\right|$ then $\Delta_{X\backslash A}^{+}$
is invertible if and only if $X$ is a $d$-SST of $X$ and $X^{d-2}\cup\left(X\backslash A\right)$
is a $\left(d-1\right)$-SST of $X$. 
\end{enumerate}
\end{lem}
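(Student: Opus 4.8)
The plan is to reduce every item to the equivalent conditions of Claim~\ref{claim:trivial_invertability_equivalent_conditions}; the two most convenient reformulations are that $\Delta_{X\backslash A}^{+}$ is invertible iff there is no nonzero cochain supported on $\left(X^{d-1}\backslash A\right)_{\pm}$ lying in $Z^{d-1}=\ker\delta_{d}$ (item (3)), and iff $H_{d}\left(X,A\right)=0$ (item (4)). For part (1), when $d=1$ the relevant cocycles are the elements of $Z^{0}=\ker\delta_{1}$, i.e.\ the functions that are constant on each $0$-component; the zero-extension of a nonzero $f$ on $V\backslash A$ lies in $Z^{0}$ exactly when $f$ is locally constant and vanishes on $A$, and such an $f\neq 0$ exists iff some $0$-component misses $A$. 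Hence invertibility is equivalent to $A$ meeting every $0$-component. For part (2), I would induct along the exhaustion $A=A_{0}\subsetneq\cdots\subsetneq A_{N}=X^{d-1}$: if $\tilde f\in Z^{d-1}$ already vanishes on $A_{n-1}$ and $\sigma\in A_{n}$, choose $\tau\in\mathrm{cf}\left(\sigma\right)$ with $\mathrm{face}\left(\tau\right)\backslash\sigma\subset A_{n-1}$; then $0=\delta_{d}\tilde f\left(\tau\right)$ reduces, after all the vanishing face-terms cancel, to $\pm\tilde f\left(\sigma\right)$, forcing $\tilde f\left(\sigma\right)=0$. Since $\tilde f|_{A_{0}}=0$ by construction, this propagates to $A_{N}=X^{d-1}$, so $\tilde f\equiv 0$ and item (3) yields invertibility.

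Part (4) is the quickest: given $\varrho\in X^{d-2}$ with $\mathrm{cf}\left(\varrho\right)\subset X^{d-1}\backslash A$, set $f=\delta_{d-1}\ind_{\varrho}$. Its value at a $\left(d-1\right)$-cell is nonzero only on the cofaces of $\varrho$, so $f$ is supported on $\mathrm{cf}\left(\varrho\right)\subset\left(X^{d-1}\backslash A\right)_{\pm}$ and is nonzero (each coface receives $\pm1$), while $\delta_{d}f=\delta_{d}\delta_{d-1}\ind_{\varrho}=0$ because $\delta^{2}=0$. Thus $f$ is a nonzero cocycle supported off $A$, and item (3) shows $\Delta_{X\backslash A}^{+}$ is not invertible. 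For part (3), I would combine item (4) with the long exact sequence of the pair $\left(X,A\right)$, where $A$ is read as the $\left(d-1\right)$-dimensional subcomplex it generates: since that subcomplex has no $d$-cells, $H_{d}\left(A\right)=0$, so exactness gives an injection $H_{d}\left(X\right)\hookrightarrow H_{d}\left(X,A\right)$; invertibility together with item (4) gives $H_{d}\left(X,A\right)=0$, whence $H_{d}\left(X\right)=0$. (A deformation retraction also gives $H_{d}\left(X\right)\cong H_{d}\left(A\right)=0$ directly, which is why the hypothesis is natural here.)

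Part (5) is the substantial one. Writing $B=X^{d-1}\backslash A$, item (2)/(3) says invertibility is equivalent to the columns of $\delta_{d}$ indexed by $B$ being linearly independent, i.e.\ $Z^{d-1}\cap\langle B\rangle=0$, equivalently (taking orthogonal complements in the $w_{\shortuparrow}$ inner product) $B_{d-1}+\langle A\rangle=\Omega^{d-1}$, where $\langle A\rangle$ denotes the forms supported on $A$. The hypothesis $\left|A\right|=\left|X^{d}\right|$ is what makes the counts match: $\dim\langle A\rangle=\left|X^{d}\right|$ and $\left|B\right|=\left|X^{d-1}\right|-\left|X^{d}\right|$. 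The plan is to show that $B_{d-1}+\langle A\rangle=\Omega^{d-1}$ holds iff the two spanning-tree conditions hold, by matching ranks across dimensions $d$, $d-1$ and $d-2$: the statement that $X$ is a $d$-SST encodes $\partial_{d}$ injective and $\beta_{d-1}\left(X\right)=0$ (equivalently $Z_{d-1}=B_{d-1}$ with $\dim B_{d-1}=\left|X^{d}\right|$, which is torsion-insensitive since $H_{d}$ of a $d$-complex is free), while the statement that $X^{d-2}\cup B$ is a $\left(d-1\right)$-SST encodes that the $\left(d-1\right)$-cells of $B$ carry no nonzero cycle and that their boundaries exhaust $Z_{d-2}$ — exactly the data needed for $\langle B\rangle$ to be a linear complement of $Z_{d-1}=B_{d-1}$ in $\Omega^{d-1}$. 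I expect the main obstacle to be precisely this bookkeeping: reconciling the cycle-side conditions coming from the SST definitions of \cite{DKM09} with the cocycle-side identity $B_{d-1}+\langle A\rangle=\Omega^{d-1}$, and verifying that the size hypothesis forces the relevant rank inequalities to be equalities. The simplicial matrix-tree theorem of \cite{DKM09} is the natural device, since it expresses $\det\Delta_{X\backslash A}^{+}$ as a weighted count that is nonzero exactly when both SST conditions are met.
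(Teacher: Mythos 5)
Your proofs of parts (1)--(4) are essentially the paper's own, step for step: the same reduction to Claim \ref{claim:trivial_invertability_equivalent_conditions}, the same induction along the exhausting sequence $\left(A_{n}\right)$ for (2), the same long exact sequence of the pair $\left(X,X^{d-2}\cup A\right)$ combined with $H_{d}\left(X,A\right)=0$ for (3) (and your side remark that exactness of that sequence needs no retraction hypothesis is correct), and the identical witness $\delta_{d-1}\ind_{\varrho}$ killed by $\delta_{d}\delta_{d-1}=0$ for (4). For part (5) the paper offers no argument at all beyond the citation of \cite[Proposition 4.1]{DKM09}, so your rank-matching sketch, which ultimately defers to that same simplicial matrix-tree result, is not missing anything the paper's proof contains --- its admitted incompleteness is not a gap relative to the paper.
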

\begin{proof}
$ $
\begin{enumerate}
\item For every $f\in\Omega^{0}\left(X\right)$
\[
\delta_{1}^{X\backslash A}f=\sum_{x\sim y}\left(f\left(x\right)\chi_{x\notin A}-f\left(y\right)\chi_{y\notin A}\right).
\]
Thus $\delta_{1}^{X\backslash A}f=0$ implies that $f$ is constant
on every connected component and is zero on every component containing
a vertex in $A$. 
\item Assume that $A$ is exhaustive with an exhausting sequence $\left(A_{n}\right)_{0\leq n\leq N}$
and that $f\in\ker\Delta_{X\backslash A}^{+}=\ker\delta_{d}^{X\backslash A}$.
For every $\sigma\in A_{1}\backslash A_{0}$ one can find $v\triangleleft\sigma$
such that all $\left(d-1\right)$-faces of $v\sigma$ except for $\sigma$
itself are in $A_{0}$ and therefore 
\[
0=\delta_{d}^{X\backslash A}f\left(v\sigma\right)=\sum_{i=0}^{d}f\left(\left(v\sigma\right)\backslash\left(v\sigma\right)_{i}\right)\cdot\chi_{\left(v\sigma\right)\backslash\left(v\sigma\right)_{i}\notin A_{\pm}}=f\left(\sigma\right).
\]
Consequently $f|_{\left(A_{1}\backslash A_{0}\right)_{\pm}}\equiv0$.
One can now proceed by induction to show that $f_{\left(A_{n}\backslash A_{0}\right)_{\pm}}\equiv0$
for every $1\leq n\leq N$. Since the case $n=N$ implies $f|_{\left(X^{d-1}\backslash A\right)_{\pm}}\equiv0$
the kernel of $\delta_{d}^{X\backslash A}$ is trivial and the result
follows. 
\item Since $X^{d-2}\cup A$ is a $\left(d-1\right)$-complex, $H_{d}\left(X^{d-2}\cup A\right)=0$.
In addition by Claim \ref{claim:trivial_invertability_equivalent_conditions}
and the assumption that $\Delta_{X\backslash A}^{+}$ is invertible
we have $H_{d}\left(X,A\right)=0$. The result now follows since whenever
$A$ is a deformation retract of $X$ the sequence 
\[
0\to H_{d}\left(X^{d-2}\cup A\right)\rightarrow H_{d}\left(X\right)\rightarrow H_{d}\left(X,A\right)\rightarrow H_{d-1}\left(X^{d-2}\cup A\right)\rightarrow\ldots
\]
is exact (see \cite[Theorem 2.13]{Ha02}). 
\item If $\varrho\in X^{d-2}$ and $\mathrm{cf}\left(\varrho\right)\subset X^{d-1}\backslash A$
then the support of the form $\tilde{f}=\delta_{d-1}\ind_{\varrho}\neq0$
is contained in $X^{d-1}\backslash A$ (thus making it the extension
of $f=\tilde{f}|_{X^{d-1}\backslash A}$). Since $\delta_{d}\tilde{f}=\delta_{d}\delta_{d-1}\ind_{\varrho}=0$,
this implies by Claim \ref{claim:trivial_invertability_equivalent_conditions}
that $\Delta_{X\backslash A}^{+}$ is not invertible. 
\item This is the content of \cite[Proposition 4.1]{DKM09}, see also \cite{Ka83}
for a discussion on the complete skeleton case.
\end{enumerate}
\end{proof}

\section{Lower simplicial branching random walk \label{sec:Lower---branching_random_walk}}

Let $X$ be a $d$-complex such that $M=\sup_{\sigma\in X^{d-1}}\deg\left(\sigma\right)<\infty$.
In \cite{MS13}, Mukherjee and Steenbergen defined a version of the
random walk on simplicial complexes correlated with the lower Laplacian
instead of the upper one. This is done by defining a new neighboring
relation, which we call the adjacency relation, that uses faces instead
of cofaces, see Definition \ref{def:adjacency_relation}. The lower
random walk (which is called by the authors the Dirichlet random walk)
is then defined as follows:
\begin{defn}[{\cite[Definition 3.1]{MS13}}]
 The $p$-lazy, $d$-lower random walk is a Markov chain $\left(Z_{n}\right)_{n\geq0}$
on $X_{\pm}^{d}\uplus\left\{ \Theta\right\} $ (where $\Theta$ is
an additional absorbing state) with transition probabilities 
\[
\mathrm{Prob}\left(Z_{n}=\sigma'\Big|Z_{n}=\sigma\right)=\begin{cases}
p & \quad,\,\sigma'=\sigma,\,\,\,\sigma,\sigma'\neq\Theta\\
\frac{1-p}{\left(M-1\right)\left(d+1\right)} & \quad,\,\sigma'\underset{\shortdownarrow}{\sim}\sigma,\,\,\,\sigma,\sigma'\neq\Theta\\
\left(1-p\right)\left[1-\frac{1}{\left(M-1\right)\left(d+1\right)}\cdot\sum_{\tau\in\mathrm{face}\left(\sigma\right)}\deg\left(\tau\right)\right] & \quad,\,\sigma'=\Theta,\,\,\,\sigma\neq\Theta\\
1 & \quad,\,\sigma',\sigma=\Theta\\
0 & \quad,\,\mbox{otherwise}
\end{cases}.
\]

\end{defn}
As for the upper walk define the heat kernel $\mathbf{p}_{n}^{\shortdownarrow}\left(\sigma,\sigma'\right)=\mathrm{Prob}\left(Z_{n}=\sigma'\bigg|Z_{0}=\sigma\right)$,
the lower expectation process $\mathcal{E}_{n}^{\shortdownarrow}\left(\sigma,\sigma'\right)=\mathbf{p}_{n}^{\shortdownarrow}\left(\sigma,\sigma'\right)-\mathbf{p}_{n}^{\shortdownarrow}\left(\sigma,\overline{\sigma'}\right)$
and its normalized version 
\[
\widetilde{\mathcal{E}}_{n}^{\shortdownarrow}\left(\sigma,\sigma'\right)=\left(\frac{M-1}{p\left(M-2\right)+1}\right)^{n}\mathcal{E}_{n}^{\shortdownarrow}\left(\sigma,\sigma'\right).
\]

The following proposition summarizes some of the results proved in
\cite{MS13} regarding the connection between the $d$-lower random
walk and the $d$-homology of the complex: 
\begin{prop}[{\cite[Proposition 1.1]{MS13}}]
Let $X$ be a finite $d$-complex such that $M=\sup_{\sigma\in X^{d-1}}\deg\left(\sigma\right)<\infty$
and $\tilde{\Delta}_{d}^{-}$ the $d$-lower Laplacian%
\footnote{This specific lower Laplacian is related to the weight function $w\equiv1$.%
} given by $\tilde{\Delta}_{d}^{-}f\left(\sigma\right)=\left(d+1\right)f\left(\sigma\right)-\sum_{\sigma'\underset{\downarrow}{\sim}\sigma}f\left(\sigma'\right)$.
\begin{enumerate}
\item The time evolution of $\widetilde{\mathcal{E}}_{n}^{\shortdownarrow}\left(\sigma,\sigma'\right)$
is given by $\widetilde{\mathcal{E}}_{n}^{\shortdownarrow}\left(\cdot,\sigma'\right)=\left(B_{p}\widetilde{\mathcal{E}}_{n-1}^{\shortdownarrow}\right)\left(\cdot,\sigma'\right)$
where $B_{p}$ acts on the first coordinate and is given by 
\[
B_{p}=\frac{p\left(M-2\right)+1}{M-1}I-\frac{1-p}{\left(M-1\right)\left(d+1\right)}\tilde{\Delta}_{d}^{-}.
\]

\item If $\frac{M-2}{3M-4}<p<1$, then $\widetilde{\mathcal{E}}_{\infty}^{\shortdownarrow}=\lim_{n\to\infty}\widetilde{\mathcal{E}}_{n}^{\shortdownarrow}$
always exists. 
\item If $\frac{M-2}{3M-4}<p<1$, then $\left\{ \widetilde{\mathcal{E}}_{\infty}^{\shortdownarrow}\left(\sigma,\cdot\right)\right\} _{\sigma\in X_{\pm}^{d-1}}\subset B^{d-1}$
if and only if $H_{d}\left(X\right)=0$. 
\item If furthermore $p\geq\frac{1}{2}$ then 
\[
\mathrm{dist}\left(\widetilde{\mathcal{E}}_{n}^{\downarrow},\widetilde{\mathcal{E}}_{\infty}^{\downarrow}\right)=O\left(\left(1-\frac{1-p}{\left(p\left(M-2\right)+1\right)\left(d+1\right)}\tilde{\lambda}^{\shortdownarrow}\left(X\right)\right)^{n}\right),
\]
where $\tilde{\lambda}^{\shortdownarrow}\left(X\right)=\min\left(\mathrm{Spec}\left(\tilde{\Delta}_{d}^{-}\Big|_{\left(B^{d+1}\right)^{\bot}}\right)\right)=\min\left(\mathrm{Spec}\left(\tilde{\Delta}_{d}^{-}\right)\right).$
\end{enumerate}
\end{prop}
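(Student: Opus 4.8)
The plan is to mirror the proof of Theorem \ref{thm:(d-1)-walk_vs_homology} for the upper walk, replacing the neighboring relation by the adjacency relation of Definition \ref{def:adjacency_relation}, the upper Laplacian $\Delta^{+}$ by the lower Laplacian $\tilde{\Delta}_{d}^{-}$, and top-dimensional Hodge theory for $H_{d}$ in place of $H_{d-1}$. As there, once the one-step evolution of part (1) is established, all four statements reduce to the spectral theory of the single self-adjoint operator $\tilde{\Delta}_{d}^{-}=\delta_{d}\partial_{d}$.

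For part (1) I would condition on the first step and invoke the Markov property. Since $\Theta$ is absorbing we have $\mathbf{p}_{n}^{\shortdownarrow}(\Theta,\sigma')=0$ for every $\sigma'\neq\Theta$, so the escape mass never returns and the first-step decomposition of $\mathbf{p}_{n+1}^{\shortdownarrow}(\sigma,\sigma')$ involves only $\sigma$ and its $\underset{\shortdownarrow}{\sim}$-neighbors. Passing to the difference that defines $\mathcal{E}_{n}^{\shortdownarrow}$ and using its antisymmetry gives
\[
\mathcal{E}_{n+1}^{\shortdownarrow}(\sigma,\sigma')=p\,\mathcal{E}_{n}^{\shortdownarrow}(\sigma,\sigma')+\frac{1-p}{(M-1)(d+1)}\sum_{\sigma''\underset{\shortdownarrow}{\sim}\sigma}\mathcal{E}_{n}^{\shortdownarrow}(\sigma'',\sigma').
\]
The orientation convention built into Definition \ref{def:adjacency_relation} is exactly what turns this neighbor sum into the off-diagonal part of $\tilde{\Delta}_{d}^{-}$; substituting $\sum_{\sigma''\underset{\shortdownarrow}{\sim}\sigma}\mathcal{E}_{n}^{\shortdownarrow}(\sigma'',\cdot)=(d+1)\mathcal{E}_{n}^{\shortdownarrow}(\sigma,\cdot)-(\tilde{\Delta}_{d}^{-}\mathcal{E}_{n}^{\shortdownarrow})(\sigma,\cdot)$ collapses the diagonal coefficient to $p+\frac{1-p}{M-1}=\frac{p(M-2)+1}{M-1}$ and identifies the one-step operator of the unnormalized process as $B_{p}$. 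Multiplying by $\frac{M-1}{p(M-2)+1}$ at each step, the normalized process $\widetilde{\mathcal{E}}_{n}^{\shortdownarrow}$ then evolves by $\hat{B}_{p}:=\frac{M-1}{p(M-2)+1}B_{p}=I-\frac{1-p}{(p(M-2)+1)(d+1)}\tilde{\Delta}_{d}^{-}$.

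For parts (2)--(4) I would use that $\tilde{\Delta}_{d}^{-}=\delta_{d}\delta_{d}^{*}$ is self-adjoint and positive semidefinite, so $\hat{B}_{p}$ is self-adjoint with largest eigenvalue $1$, attained exactly on $\ker\tilde{\Delta}_{d}^{-}$. A Gershgorin estimate on $\partial_{d}\delta_{d}$---each $(d-1)$-cell has at most $dM$ neighbors and degree at most $M$---yields the lower analogue of \cite[Proposition 2.7]{PR12}, namely $\mathrm{Spec}(\tilde{\Delta}_{d}^{-})\subset[0,(d+1)M]$. Since $\frac{1-p}{p(M-2)+1}=\frac{2}{M}$ at $p=\frac{M-2}{3M-4}$, this bound places $\mathrm{Spec}(\hat{B}_{p})\subset(-1,1]$ for every $\frac{M-2}{3M-4}<p<1$, so $\hat{B}_{p}^{\,n}$ converges to the orthogonal projection onto $\ker\tilde{\Delta}_{d}^{-}$, which is part (2). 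The discrete Hodge theorem of Subsection \ref{subsec:Discrete-Hodge-theory} (applied with $k=d$, where $\Delta_{d}^{+}=0$ because $X$ has no $(d+1)$-cells) identifies $\ker\tilde{\Delta}_{d}^{-}=Z_{d}=\mathcal{H}^{d}\cong H_{d}$, so the limit has vanishing harmonic component for every starting cell precisely when $H_{d}(X)=0$; this is part (3), argued exactly as Theorem \ref{thm:(d-1)-walk_vs_homology}(2). For part (4), the second largest eigenvalue of $\hat{B}_{p}$ equals $1-\frac{1-p}{(p(M-2)+1)(d+1)}\tilde{\lambda}^{\shortdownarrow}(X)$, and the extra hypothesis $p\geq\frac{1}{2}$ is precisely what guarantees the bottom eigenvalue $1-\frac{1-p}{(p(M-2)+1)(d+1)}\max\mathrm{Spec}(\tilde{\Delta}_{d}^{-})$ is no larger in absolute value, so the geometric decay is controlled by the spectral gap.

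The main obstacle is the sharp spectral bound $\mathrm{Spec}(\tilde{\Delta}_{d}^{-})\subset[0,(d+1)M]$. It is what pins the convergence threshold at exactly $p=\frac{M-2}{3M-4}$ (so that $-1$ is excluded from $\mathrm{Spec}(\hat{B}_{p})$ uniformly over all complexes with $\sup_{\sigma}\deg(\sigma)=M$) and simultaneously makes $p\geq\frac{1}{2}$ the correct cutoff for the gap to dominate in part (4). Obtaining the constant $(d+1)M$, rather than a crude multiple of it, is the delicate point and requires careful orientation bookkeeping in the adjacency relation, exactly as in the upper-walk estimate of \cite{PR12}.
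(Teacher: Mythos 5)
Your proposal is correct, but there is nothing in the paper to check it against: the paper does not prove this proposition at all. It is quoted as background from \cite[Proposition 1.1]{MS13} at the start of Section \ref{sec:Lower---branching_random_walk}, and the only computation the paper performs there is the identification of the one-step operator of the lower ESBRW with $I-\left(1-p\right)\Delta_{d}^{-}$ (mirroring Lemma \ref{lem:comparison_of_models}), which takes this proposition as input rather than proving it. Your reconstruction is sound and self-contained: first-step conditioning (using that $\Theta$ is absorbing, so no mass returns from it) gives the recursion for $\mathcal{E}_{n}^{\shortdownarrow}$; the identity $\sum_{\sigma''\underset{\shortdownarrow}{\sim}\sigma}f\left(\sigma''\right)=\left(d+1\right)f\left(\sigma\right)-\tilde{\Delta}_{d}^{-}f\left(\sigma\right)$ identifies the one-step operator as $B_{p}$; self-adjointness of $\tilde{\Delta}_{d}^{-}=\delta_{d}\delta_{d}^{*}$ together with the Gershgorin bound $\mathrm{Spec}\bigl(\tilde{\Delta}_{d}^{-}\bigr)\subset\left[0,\left(d+1\right)M\right]$ places the spectrum of the normalized operator in $\left(-1,1\right]$ exactly when $p>\frac{M-2}{3M-4}$ (your computation that $\frac{1-p}{p\left(M-2\right)+1}=\frac{2}{M}$ at the threshold is correct); top-dimensional Hodge theory ($\Delta_{d}^{+}=0$, hence $\ker\tilde{\Delta}_{d}^{-}=Z_{d}=\mathcal{H}^{d}\cong H_{d}$ and $\Omega^{d}=Z_{d}\oplus B^{d}$) gives part (3); and $p\geq\frac{1}{2}$ forces the normalized operator to be positive semidefinite, which is precisely why the decay in part (4) is governed by the spectral gap rather than by the bottom of the spectrum. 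This is the same strategy the paper uses for the upper-dimensional case, where Theorem \ref{thm:Finite_complexes_BRW_vs_homology} is reduced via Lemma \ref{lem:comparison_of_models} to the quoted Theorem \ref{thm:(d-1)-walk_vs_homology}; a small simplification available to you is to apply Gershgorin directly to $\tilde{\Delta}_{d}^{-}$ (diagonal $d+1$, at most $\left(d+1\right)\left(M-1\right)$ unit off-diagonal entries per row), avoiding the passage through $\partial_{d}\delta_{d}$.

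Two corrections that you make implicitly should be stated explicitly, since the proposition as transcribed contains index slips. In part (1), the operator $B_{p}$ you derive is the evolution operator of the \emph{unnormalized} process $\mathcal{E}_{n}^{\shortdownarrow}$; the normalized process $\widetilde{\mathcal{E}}_{n}^{\shortdownarrow}$ evolves by $\frac{M-1}{p\left(M-2\right)+1}B_{p}=I-\frac{1-p}{\left(p\left(M-2\right)+1\right)\left(d+1\right)}\tilde{\Delta}_{d}^{-}$, your $\hat{B}_{p}$; the statement conflates the two, and your version is the internally consistent one. In part (3), the family must be indexed by $\sigma\in X_{\pm}^{d}$ (the walk lives on oriented $d$-cells) and the containment must be in $B^{d}=\mathrm{im}\,\delta_{d}\subset\Omega^{d}$, not $B^{d-1}$; your argument --- the limits are the orthogonal projections of the forms $\ind_{\sigma'}$ onto $Z_{d}$, these span the image of that projection since the $\ind_{\sigma'}$ span $\Omega^{d}$, and $Z_{d}\cap B^{d}=0$ --- proves exactly this corrected statement. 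Neither point is a gap in your proof; both are defects of the transcription that your argument resolves correctly.
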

\begin{rem}
One can also use the lower expectation process in order to find the
dimension of $H_{d}\left(X\right)$ as in Theorem \ref{thm:(d-1)-walk_vs_homology}
and Theorem \ref{thm:Finite_complexes_BRW_vs_homology}. 
\end{rem}
As in the case of the upper Laplacian we define a new stochastic process,
called the \emph{lower simplicial branching random walk}, LSBRW for
short, which is connected to the spectrum of the lower Laplacian in
a similar way as the $d$-lower random walk. The effective process
generated from the LSBRW has the property of being a sequence of random
forms in $\Omega^{d}\left(X\right)$ already in the process level.
In addition it doesn't require normalization and there is no need
for the additional absorbing state.

\medskip{}

The $p$-lazy lower simplicial branching random walk on $X$ is a
time-homogeneous Markov chain $\left(N_{n}^{\downarrow}\left(\cdot\right)\right)_{n\geq0}$
with state space $\mathbb{N}^{X_{\pm}^{d}}$ which counts the number
of particles at time $n$ on any of the oriented $d$-cells, that
is:
\begin{itemize}
\item $N_{n}^{\shortdownarrow}$ is a random function from $X_{\pm}^{d}$
to $\mathbb{N}$.
\item The process is Markovian, i.e., $\mathrm{Prob}\left(N_{n}^{\shortdownarrow}\in A\Big|N_{1}^{\shortdownarrow},\ldots,N_{n-1}^{\shortdownarrow}\right)=\mathrm{Prob}\left(N_{n}^{\shortdownarrow}\in A\Big|N_{n-1}^{\shortdownarrow}\right)$
and time homogeneous, namely $\mathrm{Prob}\left(N_{n}^{\shortdownarrow}=g\Big|N_{n-1}^{\shortdownarrow}=f\right)$
doesn't depend on $n$.
\item $N_{n}^{\shortdownarrow}\left(\sigma\right)$ is the random number
of particles in $\tau$ at time $n$ for every $\tau\in X_{\pm}^{d}$
and $n\geq0$. 
\end{itemize}
One step evolution of the process (its transition kernel) is defined
as follows: Given a configuration of particles on $X_{\pm}^{d}$ all
the particles evolve simultaneously and independently. If a particle
is positioned in $\tau$, then it stays put with probability $p$,
and with probability $1-p$ chooses one of the faces of $\tau$ uniformly
at random and splits into new particles which are now positioned on
the $d$-cells adjacent to $\tau$ whose intersection with $\tau$
is the chosen face (one particle on each such $d$-cell). Note that
one step of the process is comprised of the evolution of all existing
particles. An illustration of one step of the lower simplicial branching
random walk on a triangle complex can be found in Figure \ref{fig:One_step_of_the_lower_branching_process}.

\begin{figure}[h]
\centering{}\includegraphics[scale=1.3]{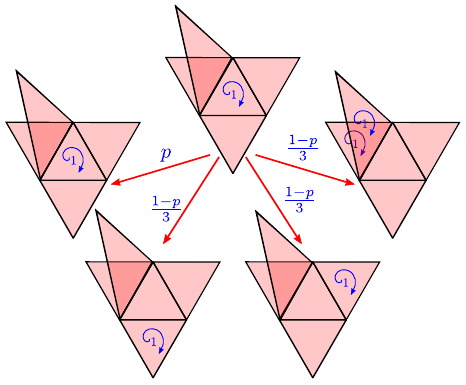}\protect\caption{One step of the LSBRW on a triangle complex.\label{fig:One_step_of_the_lower_branching_process}}
\end{figure}

The effective LSBRW is now defined by 
\[
D_{n}^{\shortdownarrow}\left(\sigma\right)=N_{n}^{\shortdownarrow}\left(\sigma\right)-N_{n}^{\shortdownarrow}\left(\overline{\sigma}\right),
\]
and its heat kernel is 
\[
\mathscr{E}_{n}^{\shortdownarrow}\left(\sigma,\sigma'\right)=E^{\sigma}\left[D_{n}^{\shortdownarrow}\left(\sigma'\right)\right].
\]

As before, the notation $\mathscr{E}_{n}^{\shortdownarrow,p}\left(\sigma,\sigma'\right)$
is used to stress the dependence on $p$.

\begin{rem}
Similarly to the case of SBRW, one can consider several variants of
the process. One can also define the model on $k$-cells in a $d$-complex
for every $1\leq k\leq d$. This however is equivalent to studying
the process on $X^{k}$ instead of $X$ and thus falls back to the
above setting. 
\end{rem}
A similar argument to the one in Lemma \ref{lem:comparison_of_models}(1)
shows that for $n\geq1$ 
\begin{align*}
\mathscr{E}_{n}^{\shortdownarrow}\left(\sigma,\sigma'\right) & =p\mathscr{E}_{n-1}^{\shortdownarrow}\left(\sigma,\sigma'\right)+\frac{1-p}{d+1}\sum_{\sigma''\underset{\shortdownarrow}{\sim}\sigma}\mathscr{E}_{n-1}^{\shortdownarrow}\left(\sigma'',\sigma'\right)=\left(I-\left(1-p\right)\Delta_{d}^{-}\right)\mathscr{E}_{n-1}^{\shortdownarrow}\left(\sigma,\sigma'\right),
\end{align*}
where in the last equality the operator acts on the first coordinate
and $\Delta_{d}^{-}$ is the lower Laplacian associated with the weight
function $w_{\downarrow}$ defined in Example \ref{exa:weight_functions}(3).
This in turn implies by a similar argument to the one in Lemma \ref{lem:comparison_of_models}(2)
that 
\[
\mathscr{E}_{n}^{\shortdownarrow,p}\left(\sigma,\sigma'\right)=\widetilde{\mathcal{E}}_{n}^{\shortdownarrow,p'}\left(\sigma,\sigma'\right),
\]
where $p'=\frac{p}{\left(1-p\right)\left(M-2\right)+1}$.

It is now possible to generalize the results proved for ESBRW to its
lower analogue by repeating the arguments in previous Sections. 

\appendix

\section{Appendix }
\begin{proof}[Proof of Claim \ref{claim:k-good_1}]
 For every $f\in\Omega_{L^{2}}^{k}$\emph{
\begin{align*}
\left\Vert \delta_{k}f\right\Vert ^{2} & =\sum_{\tau\in X^{k}}w\left(\tau\right)\left|\delta_{k}f\left(\tau\right)\right|^{2}\leq\sum_{\tau\in X^{k}}w\left(\tau\right)\cdot\binom{k}{2}\sum_{i=0}^{k}\left|f\left(\tau\backslash\tau_{i}\right)\right|^{2}\\
 & =\sum_{\sigma\in X^{k-1}}\binom{k}{2}\cdot\left(\sum_{\tau\in\mathrm{cf}\left(\sigma\right)}w\left(\tau\right)\right)\left|f\left(\sigma\right)\right|^{2}\leq\binom{k}{2}\cdot\left(\sup_{\sigma\in X^{k-1}}\frac{1}{w\left(\sigma\right)}\sum_{\tau\in\mathrm{cf}\left(\sigma\right)}w\left(\tau\right)\right)\left\Vert f\right\Vert ^{2},
\end{align*}
which shows that (\ref{eq:good_weight_functions}) implies that $\delta_{k}$
is bounded. As for the other direction for any $\sigma\in X_{\pm}^{k-1}$
the function $\ind_{\sigma}$ satisfies $\left\Vert \frac{1}{\sqrt{w\left(\sigma\right)}}\ind_{\sigma}\right\Vert ^{2}=1$
and therefore 
\begin{align*}
\left\Vert \delta_{k}\ind_{\sigma}\right\Vert ^{2} & =\sum_{\tau\in X^{k}}w\left(\tau\right)\left|\delta_{k}\left(\frac{1}{\sqrt{w\left(\sigma\right)}}\ind_{\sigma}\left(\tau\right)\right)\right|^{2}=\frac{1}{w\left(\sigma\right)}\sum_{\tau\in\mathrm{cf}\left(\sigma\right)}w\left(\tau\right).
\end{align*}
Thus, whenever $\sup_{\sigma\in X^{k-1}}\frac{1}{w\left(\sigma\right)}\sum_{\tau\in\mathrm{cf}\left(\sigma\right)}w\left(\tau\right)=\infty$
the operator $\delta_{k}$ is not bounded. }
\end{proof}

\begin{proof}[Proof of Claim \ref{claim:k-good_2}]
 If $\deg\left(\sigma\right)<\infty$ for every $\sigma$, then $\partial_{k}g\left(\sigma\right)$
is a finite sum for every $\sigma\in X_{\pm}^{d-1}$ and is thus well
defined. If \ref{eq:good_weight_functions} holds, then for every
$g\in\Omega_{L^{2}}^{k}$ 
\begin{align*}
\left\Vert \partial_{k}g\right\Vert ^{2} & =\sum_{\sigma\in X^{k-1}}\frac{1}{w\left(\sigma\right)}\left|\sum_{v\vartriangleleft\sigma}w\left(v\sigma\right)g\left(v\sigma\right)\right|^{2}\leq\sum_{\sigma\in X^{k-1}}\frac{1}{w\left(\sigma\right)}\left(\sum_{\tau\in\mathrm{cf}\left(\sigma\right)}w\left(\tau\right)\right)\left(\sum_{v\vartriangleleft\sigma}w\left(v\sigma\right)\left|g\left(v\sigma\right)\right|^{2}\right)\\
 & \leq\sup_{\sigma\in X^{k-1}}\left(\frac{1}{w\left(\sigma\right)}\sum_{\tau\in\mathrm{cf}\left(\sigma\right)}w\left(\tau\right)\right)\cdot\left(\sum_{\sigma\in X^{k-1}}\sum_{\tau\in\mathrm{cf}\left(\sigma\right)}w\left(\tau\right)\left|g\left(\tau\right)\right|^{2}\right)\\
 & =k\cdot\sup_{\sigma\in X^{k-1}}\left(\frac{1}{w\left(\sigma\right)}\sum_{\tau\in\mathrm{cf}\left(\sigma\right)}w\left(\tau\right)\right)\cdot\left\Vert g\right\Vert ^{2}.
\end{align*}

\end{proof}

\begin{proof}[Proof of Claim \ref{claim:trivial_invertability_equivalent_conditions}]
 The equivalence of the first three conditions follows from: 
\begin{align}
\left\langle \Delta_{X\backslash A}^{+}f,f\right\rangle _{X\backslash A} & =\left\langle \delta_{d}^{X\backslash A}f,\delta_{d}^{X\backslash A}f\right\rangle =\left\langle \delta_{d}\tilde{f},\delta_{d}\tilde{f}\right\rangle =\sum_{\tau\in X^{d}}\left(\sum_{i=0}^{d}f\left(\tau\backslash\tau_{i}\right)\cdot\chi_{\tau\backslash\tau_{i}\notin A_{\pm}}\right)^{2},\label{eq:dirichlet_problem_condition_for_trivial_kernel}
\end{align}
where $\left\langle \cdot,\cdot\right\rangle _{X\backslash A}$ is
the inner product $\left\langle \cdot,\cdot\right\rangle $ restricted
to $X^{d-1}\backslash A$. As for the last claim, the equivalence
between $\ker\delta_{d}^{X\backslash A}=0$ and $H_{d}\left(X,A\right)=0$
follows directly from the definition of the relative homology, see
\cite[Section 2.1]{Ha02}. 
\end{proof}
\bibliographystyle{alpha}
\bibliography{citations}

\medskip{}

\lyxaddress{Department Mathematik\\
ETH Zürich \\
CH-8092 Zürich \\
Switzerland\\
E-mail: ron.rosenthal@math.ethz.ch}
\end{document}